\documentclass[11pt]{article}
\usepackage{amsmath, latexsym, amsfonts, amssymb, amsthm, amscd}
\usepackage[letterpaper, margin=1in]{geometry}
\usepackage{upquote}
\usepackage{color}
\usepackage{setspace}
\onehalfspacing
\usepackage[labelfont=bf]{caption}
\usepackage{stmaryrd}

\usepackage[utf8]{inputenc}
\usepackage[T1]{fontenc}
\usepackage[english]{babel}
\usepackage{dsfont}
\usepackage{mathtools} 
\usepackage{bm}

\usepackage{xcolor}
\definecolor{Maroon}{HTML}{ad2231}
\definecolor{webgreen}{HTML}{008000}

\usepackage{hyperref}
\hypersetup{colorlinks, breaklinks, urlcolor=Maroon, linkcolor=Maroon, citecolor=webgreen} % Set link colors

\allowdisplaybreaks

\usepackage{bookmark}

\makeatletter

\makeatother

\newtheorem{lemma}{Lemma}
\newtheorem{remark}{Remark}

\newtheorem{theorem}{Theorem}

\theoremstyle{definition}

\newtheorem*{general*}{A general remark}

% For integral
\usepackage{bigints}

% For comments
\usepackage[authormarkup=none]{changes}

\makeatletter
\DeclareRobustCommand{\cev}[1]{%
  {\mathpalette\do@cev{#1}}%
}
\newcommand{\do@cev}[2]{%
  \vbox{\offinterlineskip
    \sbox\z@{$\m@th#1 x$}%
    \ialign{##\cr
      \hidewidth\reflectbox{$\m@th#1\vec{}\mkern4mu$}\hidewidth\cr
      \noalign{\kern-\ht\z@}
      $\m@th#1#2$\cr
    }%
  }%
}
\makeatother

% -----------------------------------------------------------------------------------
% autoref
% -----------------------------------------------------------------------------------
\usepackage{cleveref}

% -----------------------------------------------------------------------------------
% mathematics macros
% -----------------------------------------------------------------------------------
\usepackage{mathtools}
\usepackage{stmaryrd}

\newcommand\dsE{{\mathbb E}}

\newcommand\dsN{{\mathbb N}}

\newcommand\dsP{{\mathbb P}}

\newcommand\dsR{{\mathbb R}}

\newcommand\dsT{{\mathbb T}}

\newcommand\dsZ{{\mathbb Z}}

\newcommand\scE{{\mathcal E}}
\newcommand\scF{{\mathcal F}}

\newcommand\scK{{\mathcal K}}

\newcommand\scV{{\mathcal V}}

\newcommand\vect{{\bf t}}
\newcommand\vecu{{\bf u}}
\newcommand\vecv{{\bf v}}
\newcommand\vecw{{\bf w}}
\newcommand\vecx{{\bf x}}
\newcommand\vecy{{\bf y}}

\newcommand\vecU{{\bf U}}

% Basic Probability
\newcommand{\E}[1]{{\mathbb E}\left[#1\right]}
\newcommand{\e}{{\mathbb E}}

\newcommand{\p}[1]{{\mathbb P}\left(#1\right)}

\newcommand\inlawHIGH{\,{\buildrel d \over \rightarrow}\,} 
\newcommand\inlaw{{\inlawHIGH}}

% Brackets

\newcommand\given[1][]{\:#1\vert\:}

% -----------------------------------------------------------------------------------
% comments macros
% -----------------------------------------------------------------------------------
\usepackage[authormarkup=none]{changes}

\definechangesauthor[name={Cai},color=red]{C}

%\date{Revision after AIHP}

\begin{document}
\title{The \(k\)-cut model in deterministic and random trees}
\author{Gabriel Berzunza\footnote{E-mail:
        \href{mailto:gabriel.berzunza-ojeda@math.uu.se}{gabriel.berzunza-ojeda@math.uu.se}},\,\,
    Xing Shi Cai\footnote{E-mail: \href{mailto:xingshi.cai@math.uu.se}{xingshi.cai@math.uu.se}} \,  and
    Cecilia Holmgren\footnote{E-mail: \href{mailto:cecilia.holmgren@math.uu.se}{cecilia.holmgren@math.uu.se}} \\ \vspace*{10mm}
{\small Department of Mathematics, Uppsala University, Sweden} }
\maketitle

\vspace{0.1in}

\begin{abstract} 
    The \(k\)-cut number of rooted graphs was introduced by Cai et al.\ \cite{Cai2018} as a
    generalization of the classical cutting model by Meir and Moon \cite{Meir1970}. In this paper,
    we show that all moments of the \(k\)-cut number of conditioned Galton-Watson tree converge
    after proper rescaling, which implies convergence in distribution to the same limit law
    regardless of the offspring distribution of the trees. This
    extends the result of Janson \cite{Janson2006}.  Using the same method, we also show that the
    \(k\)-cut number of various random or deterministic trees of logarithmic height converges in
    probability to a constant after rescaling, such as random split-trees, uniform random recursive trees, and scale-free random trees. 
\end{abstract}

\noindent {\sc Key words and phrases}: \(k\)-cut, cutting, conditioned Galton-Watson trees,
split trees, preferential attachment trees

%\noindent {\sc Subject Classes}: 60F05; 60J80.

%%%%%%%%%%%%%%%%%%%%%%%%%%%%%%%%%%%%%%%%%%%%%%%%%%%
\section{Introduction and main result}
%%%%%%%%%%%%%%%%%%%%%%%%%%%%%%%%%%%%%%%%%%%%%%%%%%%

In order to measure the difficulty for the destruction of a resilient network Cai et al.\
\cite{Cai2018} introduced a generalization of the cut model of Meir and Moon  \cite{Meir1970} where
each vertex (or edge) needs to be cut $k \in \dsN$ times (instead of only once) before it is
destroyed. More precisely, consider that the resilient network is a rooted tree $\dsT_{n}$, with $n
\in \dsN$ vertices. We assume that sibling vertices in \(\dsT_{n}\) are ordered.  (Such trees
sometimes are referred to as plane trees.) We destroy it by removing its vertices as follows:  {\bf
Step 1:} Choose a vertex uniformly at random from the component that contains the root and cut the
selected vertex once. {\bf Step 2:} If this vertex has been cut $k$ times, remove the vertex
together with the edges attached to it from the tree. {\bf Step 3:} If the root has been removed,
then stop. Otherwise, go to step {\bf Step 1}. We let $\mathcal{K}_{k}(\dsT_{n})$ denote the
(random) total number of cuts needed to end this procedure the $k$-cut number, i.e.,
$\mathcal{K}_{k}(\dsT_{n})$ models how much effort it takes to destroy the network. (For simplicity,
we will omit the subscript \(k\) and write \(\mathcal{K}(\dsT_{n})\).) It should be clear that one
can define analogously an edge deletion version of the previous algorithm, where one needs to cut an
edge $k$ times before removing it from the root component. Then, one would be interested in the
number $\mathcal{K}_{e}(\dsT_{n})$ of edge cuts needed to isolate the root of $\dsT_{n}$. 

The case $k = 1$ (i.e., the traditional cutting model of Meir and Moon \cite{Meir1970}) has been
well-studied by several authors. More precisely, Meir and Moon estimated the
first and second moment of the $1$-cut number in the cases when $\dsT_{n}$ is a Cayley tree
\cite{Meir1970} and a recursive tree \cite{Meir1974}. Subsequently, several weak limit theorems for
the $1$-cut number have been obtained for Cayley trees (Panholzer \cite{Pan2004, Pan2006}), complete
binary trees (Janson \cite{Janson2004}), conditioned Galton-Watson trees (Janson \cite{Janson2006}
and Addario-Berry et al.\ \cite{Adda2014}), recursive trees (Drmota et al.\ \cite{Drmota2009},
Iksanov and Möhle \cite{Iksanov2007}), binary search trees (Holmgren \cite{Holmgren2010}) and split
trees (Holmgren \cite{Holmgren2011}). In the general case $k \geq 1$, the authors in \cite{Cai2018}
established first moment estimates of $\mathcal{K}(\dsT_{n})$ for families of
deterministic and random trees, such as paths, complete binary trees, split trees, random
recursive trees and conditioned Galton-Watson trees. In particular, the authors in \cite{Cai2018} have proven a
weak limit theorem for $\mathcal{K}(\dsT_{n})$ when $\dsT_{n}$ is a path
consisting of $n$ vertices. More recently,  Cai and Holmgren \cite{Cai20182} also obtained a weak
limit theorem in the case when $\dsT_{n}$ is a complete binary tree. 

In this work, we continue the investigation of this general cutting-down procedure in conditioned Galton-Watson trees and show that   $\scK(\dsT_{n})$, after a proper rescaling,
converges in distribution to a non-degenerate random variable. More precisely, let $\xi$ be a
non-negative integer-valued random variable such that 
\begin{eqnarray} \label{eq11}
 \dsE[\xi] = 1 \hspace*{4mm} \text{and} \hspace*{4mm} 0 < \sigma^{2} \coloneqq Var(\xi) < \infty.
\end{eqnarray} 

\noindent We further assume that the distribution of $\xi$ is aperiodic. This last condition is to
avoid unnecessary complications, but our results can be extended to the periodic case. We then
consider a Galton-Watson process with (critical) offspring distribution $\xi$. Let $\dsT_{n}$ be the
family tree conditioned on its number of vertices being $n \in \dsN$, providing that this
conditioning makes sense. The main result of this paper is the following. We write $\inlaw$ to
denote convergence in distribution.  (In the rest of the paper CRT stands for Continuum Random
Tree.)

%In order to state our main result, we need to introduce some notations. Let ${\rm Exp}(1)$ denote an exponential random variable of parameter $1$, let ${\rm Beta}(a, b)$ be a beta random variable of parameters $(a,b) \in \dsR_{+}^{2}$ and let ${\rm Rayleigh}(1)$ be a random variable with the Rayleigh distribution (i.e., it has density $x e^{x^{2}/2}$ for $x \in \dsR_{+}$). We often use the symbol $\overset{d}{=}$ to denote equal in distribution.

\begin{theorem} \label{theo1}
    Let \(k \in \dsN\).
Let $\dsT_{n}$ be a Galton-Watson tree conditioned on its number of vertices being $n \in \dsN$ with offspring distribution $\xi$ satisfying \eqref{eq11}. Then, 
\begin{eqnarray} \label{eq21}
   \sigma^{-1/k} n^{-1+1/2k} \scK(\dsT_{n})   
    \inlaw Z_{\rm CRT}, \hspace*{4mm} \text{as} \hspace*{2mm} n \rightarrow \infty,  
\end{eqnarray}  

\noindent where $Z_{\rm CRT}$ is a non-degenerate random variable whose law is determined entirely by its
moments: $\dsE[Z_{\rm CRT}^{0}]=1$, and for $q \in \dsN$, $\dsE[Z_{\rm CRT}^{q}] = \eta_{k,
q}$ with
\begin{eqnarray}\label{eq:ZCRT}
\eta_{k, q}
\coloneqq
q! \int_{0}^{\infty} \cdots \int_{0}^{\infty} y_{1} (y_{1}+y_{2})\cdots (y_{1}+ \cdots +y_{q}) e^{- \frac{(y_{1}+ \cdots +y_{q})^{2}}{2} } F_{q}(\vecy_{q})\;{\rm d} y_{q} \cdots\;{\rm d} y_{1},
\end{eqnarray}

\noindent where $\vecy_{q} = (y_{1}, \dots, y_{q}) \in \dsR_{+}^{q}$ and
\begin{eqnarray*}
F_{q}(\vecy_{q}) \coloneqq  \int_{0}^{\infty} \int_{0}^{x_{1}} \dots \int_{0}^{x_{q-1}} \exp \left( - \frac{y_{1}x_{1}^{k} + y_{2}x_{2}^{k} +\dots + y_{q}x_{q}^{k} }{k !} \right)\;{\rm d} x_{q} \cdots\;{\rm d} x_{2}\;{\rm d} x_{1}.
\end{eqnarray*}

\noindent Furthermore, if $\dsE[\xi^{p}] < \infty$ for every $p \in \dsZ_{\ge 0}$, then for every $q \in \dsZ_{\ge 0}$, $\sigma^{-q/k}n^{-q+q/2k}\dsE[\scK(\dsT_{n})^{q}] \rightarrow \dsE[Z_{{\rm CRT}}^{q}]$ as $n \rightarrow \infty$. 
\end{theorem}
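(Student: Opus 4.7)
The plan is to use the method of moments. A Carleman-type growth bound on $\eta_{k,q}$ follows from \eqref{eq:ZCRT} by elementary estimates, so $Z_{\rm CRT}$ is determined by its moments and it suffices to establish the second assertion — convergence of the rescaled moments to $\eta_{k,q}$. This directly yields \eqref{eq21} under the extra hypothesis on $\xi$; the merely-finite-variance case of \eqref{eq21} would then be recovered by a truncation argument exploiting the universality of Aldous's scaling limit.

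The key tool is a continuous-time Poissonization: attach to each vertex $v$ an independent rate-one Poisson process and interpret each arrival at $v$ as a \emph{real} cut whenever $v$ still belongs to the root component. This preserves the joint law of the cuts. A crucial structural identity is that $v$ is in the root component at time $t$ if and only if $\tau_w^{(k)} > t$ for every ancestor $w \preceq v$ (including $v$ itself), where $\tau_w^{(k)}$ denotes the time of the $k$-th Poisson arrival at $w$. Combined with Campbell's formula for Poisson integrals, this expresses the $q$-th moment as
\begin{equation*}
    \dsE\bigl[\scK(\dsT_n)^q\bigr]
    = \sum_{(v_1,\ldots,v_q)} \int_0^{\infty}\!\!\cdots\int_0^{\infty} \bP\!\Bigl(\tau_w^{(k)} > t_i \text{ for all } i \text{ and all } w \preceq v_i \Bigr) dt_1 \cdots dt_q + (\text{lower order}),
\end{equation*}
where the remainder absorbs collisions among the $v_i$. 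By independence of the $\tau_w^{(k)}$'s, the integrand factorizes over the ancestor set $\bigcup_i \{w : w \preceq v_i\}$ as a product of Gamma$(k,1)$ tails.

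To identify the limit, view $(v_1,\ldots,v_q)$ as uniformly sampled from $\dsT_n^q$, which contributes a factor $n^q$, and rescale $t_i = \sigma^{1/k} n^{-1/(2k)} x_i$. By Aldous's theorem, the tree $(\dsT_n, \sigma n^{-1/2} d_{\rm graph})$ converges in distribution to the Brownian CRT $\mathcal{T}$, and the subtree spanned by the root and the $q$ sampled vertices — with successive branch lengths $y_1,\ldots,y_q$ in the line-breaking order — converges jointly with density $y_1(y_1+y_2)\cdots(y_1+\cdots+y_q) e^{-(y_1+\cdots+y_q)^2/2}$, which is precisely the outer weight of $\eta_{k,q}$. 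On the $i$-th branch, of rescaled length $y_i$, there are approximately $y_i\sqrt{n}/\sigma$ vertices, so the product along it of $\bP(\tau^{(k)} > t_i) \approx 1 - t_i^k/k!$ converges to $\exp(-y_i x_i^k/k!)$. The nested structure of the maxima $\max\{t_i : w \preceq v_i\}$ as $w$ moves up shared ancestral segments produces the ordered integration $0 < x_q < \cdots < x_1$ in $F_q(\vecy_q)$, and the prefactor $q!$ arises from symmetrizing over the orderings of the unordered $t_i$.

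The main obstacle is establishing uniform integrability — equivalently, a priori bounds $\dsE[\scK(\dsT_n)^q] \le C_q \, n^{q-q/(2k)}$ valid for all $n$. These are needed both to justify the passage to the limit above and to upgrade pointwise convergence of integrands to convergence of expectations. They rest on sub-Gaussian control of the joint depths of uniformly random vertices in $\dsT_n$, which is available under $\dsE[\xi^p] < \infty$ for all $p$ (via standard height-profile estimates for conditioned Galton-Watson trees) but genuinely fragile otherwise — explaining why the moment statement carries the extra hypothesis. A secondary, combinatorial difficulty is verifying that the product of Gamma tails over the full ancestor set reduces, in the CRT limit, to a clean product over the $q$ reduced branches; this requires careful handling of shared ancestral segments near the root, where the ancestries of distinct $v_i$ overlap and must contribute coherently to a single branch factor.
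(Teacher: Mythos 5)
Your overall architecture --- the Poissonized record representation, the factorization of the $q$-th moment over ancestor sets as products of ${\rm Gamma}(k)$ tails, the identification of the limit through Aldous's line-breaking density for the subtree spanned by $q$ uniform vertices, and profile-based a priori bounds $\dsE[\scK(\dsT_{n})^{q}]=O(n^{q-q/(2k)})$ --- coincides in all essentials with the paper's proof, and the annealed method of moments you describe does deliver both assertions of the theorem \emph{under the extra hypothesis} that $\dsE[\xi^{p}]<\infty$ for every $p$. (Minor points: the collision/diagonal terms in your Campbell expansion, and the negligibility of the $r$-records for $r\ge 2$, each need the short argument the paper supplies via its Lemmas 1 and 8; and the a priori bounds come from $\dsE[w_{i}(\dsT_{n})^{q}]\le Ci^{q}$ for the profile, not from sub-Gaussian depth control per se.)

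The genuine gap is the first assertion, which the theorem claims under only $\dsE[\xi]=1$ and $0<\sigma^{2}<\infty$. You defer this case to ``a truncation argument exploiting the universality of Aldous's scaling limit,'' but no such argument is given and it is far from clear that one exists: $\scK(\dsT_{n})$ is governed by $\sum_{v}d_{n}(v)^{-1/k}$, i.e.\ by the number of vertices at depths $o(\sqrt{n})$, which is exactly the part of the tree \emph{not} controlled by convergence to the CRT, and truncating $\xi$ perturbs the law of the conditioned tree in a way that is not easily coupled at that resolution. The paper avoids this entirely by running the method of moments \emph{conditionally on the tree}: given $\dsT_{n}$, all moments of $\scK_{1}(\dsT_{n})$ are expressed through the contour function $\widehat{V}_{n}$ alone, and the only probabilistic input is the joint convergence of $n^{-1/2}\widetilde{V}_{n}$ together with $\int_{0}^{1}(n^{-1/2}\widehat{V}_{n}(t))^{-1/k}\,{\rm d}t$ to the corresponding functionals of $2\sigma^{-1}B^{\rm ex}$, an adaptation of Janson's Lemma 4.7 requiring only finite variance. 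A Skorokhod coupling then gives almost sure convergence of the conditional moments, hence conditional convergence in distribution of the law $\mu_{n}$ of the rescaled $\scK_{1}(\dsT_{n})$ to the random measure $\nu_{2B^{\rm ex}}$, and the unconditional limit $Z_{\rm CRT}$ with $\dsE[Z_{\rm CRT}^{q}]=\dsE[m_{q}(2B^{\rm ex})]$ follows by dominated convergence applied to $\int g\,{\rm d}\mu_{n}$ for bounded continuous $g$; no moment of $\xi$ beyond the second is used. This quenched-then-annealed structure is the idea your proposal is missing, and without it (or a genuinely worked-out substitute) your argument establishes the theorem only under the stronger moment hypothesis on $\xi$.
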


In the case $k=1$, \Cref{theo1} reduces to a \(Z_{{\rm CRT}}\) having a Rayleigh distribution with
density $xe^{-x^{2}/2}$, for $x \in \mathbb{R}_{+}$. More precisely, one can verify that
\(\eta_{1,q} = 2^{q/2} \Gamma(1 +q/2) \), for $q \in \dsZ_{\ge 0}$, which are the moments of a
random variable with the Rayleigh distribution; in this paper $\Gamma(\cdot)$ denotes the well-known
gamma function. As we mentioned earlier, the case $k=1$ has been shown in \cite[Theorem 1.6]{Janson2006} (or Addario-Berry et al.\ \cite{Adda2014}). We henceforth assume throughout this
paper that \(k \ge 2\). 

It is also important to mention that we could not find a simpler expression
(in general) for the moments $\eta_{k,q}$ except for some particular instances. For $q=1$, we have 
\begin{equation*}\label{eq:eta:1}
        \eta _{k,1}= 2^{-\frac{1}{2k}}\frac{(k!)^{\frac{1}{k}}}{k} \Gamma
            \left(\frac{1}{k}\right) \Gamma \left(1-\frac{1}{2 k}\right).
 \end{equation*}

\noindent Then \Cref{theo1} provides a proof of \cite[Lemma 4.10]{Cai2018}, where an estimation of
the first moment of $\scK(\dsT_{n})$ was first announced but whose proof was left to the reader.
One can also compute with the help of Mathematica the second moment of \(Z_{{\rm
CRT}}\) or other particular examples. However, the expressions are too involved and we decided
not to include them.  

On the other hand, let \( (U_{1}, \dots, U_{q}) \) be $q$ i.i.d.\ leaves of a
Brownian CRT and define the vector $(L_{0}^{\rm CRT}, L_{1}^{\rm CRT}, \dots, L_{q}^{\rm CRT})$
where \(L_{0}^{\rm CRT} = 0\) and \(L_{i}^{\rm CRT}\) is the total length of the minimal subtree of a Brownian CRT which connects its root and the leaves of \(U_{1},\dots,U_{i}\); see \cite[Lemma 21]{Aldous1993} from where one can deduce explicitly the distribution of $(L_{0}^{\rm CRT}, L_{1}^{\rm CRT}, \dots L_{q}^{\rm CRT})$. From the
proof of \Cref{theo1}, we obtain, for $q \in \dsN$, that 
\begin{eqnarray*} 
\eta_{k,q}  = q! \int_{0}^{\infty} \int_{0}^{x_{1}} \dots \int_{0}^{x_{q-1}}
\mathbb{E}\left[ 
\exp \left( 
    - \frac{\sum_{i=1}^{q} (L_{i}^{\rm CRT} - L_{i-1}^{\rm CRT})x_{i}^{k}}{k !} 
\right)
\right] \;{\rm d} \cev{\vecx}_{q} , 
\end{eqnarray*}

\noindent  where $\cev{\vecx}_{q} = (x_{q}, \dots, x_{1}) \in \dsR_{+}^{q}$. This suggests that it ought to be possible to build the random
variable $Z_{\rm CRT}$ by some construction that can be interpreted as the $k$-cut model on the
Brownian CRT defined by Aldous \cite{Aldous19912, Aldous1993}. The appearance of the Brownian CRT in
this framework should not come as a surprise since it is well-known that if we assign length
$n^{-1/2}$ to each edge of the Galton-Watson tree $\mathbb{T}_{n}$, then the latter converges weakly
to a Brownian CRT as $n \rightarrow \infty$. We believe that this connection can be exploited even
more than the one used in this work in order to obtain the precise distribution of $Z_{\rm CRT}$.
For example, ideas from \cite{Berotoin20013} and \cite{Adda2014} could be useful to answer this
question. 
 
The approach used in this work consists of implementing an extension of the idea of Janson
\cite{Janson2006}, which was used in \cite{Cai2018}, in order to study the $k$-cut model on
deterministic and random trees. The authors in \cite{Cai2018} introduced an equivalent model that
allows them to define $\scK(\dsT_{n})$ in terms of the number of records in $\dsT_{n}$ when vertices
are assigned random labels. More precisely, let $(E_{i,v})_{i\geq 1,v \in  \dsT_{n}}$ be a sequence
of independent exponential random variables with parameter $1$; ${\rm Exp}(1)$ for
short. Let $G_{r,v} \coloneqq \sum_{1 \leq i \leq r} E_{i,v}$, for  $r \in \dsN$ and $v \in
\dsT_{n}$. Clearly, $G_{r,v}$ has a gamma distribution with parameters $(r,1)$, which we
denote by Gamma$(r)$. Imagine that each vertex  $v \in \dsT_{n}$ has an alarm clock and $v$'s
clock fires at times $(G_{r,v})_{r\geq 1}$. If we cut a vertex when its alarm clock fires, then due
to the memoryless property of exponential random variables, we are actually choosing a vertex
uniformly at random to cut. However, this also means that we are cutting vertices that have already
been removed from the tree. Thus, for a cut on vertex $v$ at time $G_{r,v}$ (for some $r \in \{1,
\dots, k\}$) to be counted in $\scK(\dsT_{n})$, none of its strict ancestors can already have
been cut $k$ times, i.e.,
\begin{eqnarray*}
G_{r,v} < \min \{ G_{k,u}: u \in \dsT_{n}  \, \, \text{and} \, \,  u \,\, \text{is a strict ancestor of} \,\, v \}.
\end{eqnarray*}

\noindent When the previous event happens, we say that \(G_{r,v}\), or simply $v$, is an $r$-record and let  
\begin{eqnarray} \label{eq2}
I_{r,v} \coloneqq  \llbracket G_{r,v} < \min \{ G_{k,u}: u \in \dsT_{n}  \, \, \text{and} \, \,  u \,\, \text{is a strict ancestor of} \,\, v \} \rrbracket,
\end{eqnarray}

\noindent where $\llbracket \cdot \rrbracket$ denotes the Iverson bracket, i.e., $\llbracket S \rrbracket = 1$ if the statement $S$ is true and $\llbracket S \rrbracket = 0$ otherwise. Let $\scK_{r}(\dsT_{n})$ be the number of $r$-records, i.e., $\scK_{r}(\dsT_{n}) 
\coloneqq \sum_{v \in \dsT_{n}} I_{r,v}$. Then, it should be clear that 
\begin{eqnarray} \label{eq32}
\scK(\dsT_{n}) \overset{d}{=} \sum_{1 \leq r \leq k} \scK_{r}(\dsT_{n}),
\end{eqnarray}

\noindent where $\overset{d}{=}$ denotes equal in distribution. 

Loosely speaking, we then consider the well-known {\sl depth-first} search walk or {\sl contour function}
$V_{n} = (V_{n}(t), t \in [0, 2(n-1)])$ of the (ordered) tree \(\dsT_{n}\) as depicted in
\Cref{fig:vn}, that is, $V_{n}(t)$ is ``the depth of the \(t\)-th vertex'' visited in this walk;
this will be made precise in the next section. As it is well-known (see Aldous \cite[Theorem 23 with
Remark 2]{Aldous1993} or \cite[Theorem 1]{Marck2003}), when $\mathbb{T}_{n}$ is a conditioned
Galton-Watson with offspring distribution satisfying \eqref{eq11}, we have that 
\begin{eqnarray*} 
(n^{-1/2} V_{n}(2(n-1)t), t \in [0,1]) \inlaw 2 \sigma^{-1} B^{\rm ex}, \hspace*{5mm} \text{as} \hspace*{2mm} n \rightarrow \infty. 
\end{eqnarray*}

\noindent in $C([0,1], \dsR_{+})$, with its usual topology, and where $B^{\rm ex} = (B^{\rm ex}(t),
t \in [0,1])$ is a standard normalized Brownian excursion. It has been shown in \cite[Lemma
2.1]{Cai2018} that\footnote{For two sequences of non-negative real numbers $(A_{n})_{n \geq 1}$ and
$(B_{n})_{n \geq 1}$ such that $B_{n} >0$, we write $A_{n} \sim B_{n}$ if $A_{n}/B_{n} \rightarrow
1$ as $n \rightarrow \infty$} \(\e{[I_{r,v}] \sim C_{r,k} d_{n}(v)^{-r/k}}\), for some (explicit)
constant \(C_{r,k} >0\), where $d_{n}(v)$ is the depth of the vertex $v \in \mathbb{T}_{n}$. Let
$\circ$ denote the root of $\mathbb{T}_{n}$. Thus, informally 
\begin{align*}\label{eq:appox}
    \E{\scK_{r}(\dsT_{n}) \given \dsT_{n}} 
    & \approx \sum_{v \in \mathbb{T}_{n} \setminus \{ \circ\}} \frac{C_{r,k}}{d_{n}(v)^{r/k}} 
    \approx   \frac{C_{r,k}}{2} \int_{0}^{2(n-1)}  \frac{{\rm d} t}{ V_{n}(t)^{r/k}} 
    \\
    &
    \approx \frac{C_{r,k}}{n^{-1+\frac{r}{2k}}} \int_{0}^{1} \left( \frac{V_{n}(2(n-1)t)}{\sqrt{n}} \right)^ {-\frac{r}{k}} \; \mathrm{d}t \\
    & \approx \frac{C_{r,k}}{n^{-1+\frac{r}{2k}}} \left( \frac{\sigma}{2} \right)^{\frac{r}{k}} \int_{0}^{1}  \frac{ \mathrm{d}t}{B^{\rm ex}(t)^{r/k}},
\end{align*}
 
\noindent when \(n\) is large. One then expects that
\begin{equation*}\label{eq:appox:1}
    \sigma^{-r/k} n^{-1+\frac{r}{2k}}  \E{\scK_{r}(\dsT_{n})} \sim C_{r,k} \E{\int_{0}^{1} \left( {2 B^{\rm ex}(t)} \right)^ {-r/k} \mathrm{d}t}, \hspace*{4mm} \text{as} \hspace*{2mm} n \rightarrow \infty,
\end{equation*}

\noindent which coincides with the right-hand side of  \eqref{eq:ZCRT} when \(r = q=1\). Note that
this informal computation suggests that\footnote{For two sequences of non-negative real numbers
$(A_{n})_{n \geq 1}$ and $(B_{n})_{n \geq 1}$ such that $B_{n}>0$, we write $A_{n} = O(B_{n})$ if
$\limsup_{n \rightarrow \infty } A_{n}/B_{n} < \infty$.} $\E{\scK_{r}(\dsT_{n})} =
O(n^{1-\frac{r}{2k}})$, for $r \in \{1, \dots, k\}$. As a consequence, Markov's inequality
implies that $ n^{-1+ \frac{1}{2k}} \scK_{r}(\dsT_{n}) \rightarrow 0$ in probability, as $n \rightarrow
\infty$, for $r \in \{2, \dots, k\}$. As shown later, by the identity in (\ref{eq32}), it would be enough to
prove \Cref{theo1} for $\scK_{1}(\dsT_{n})$ instead of $\scK(\dsT_{n})$. 

In the rest of the paper, \Cref{sec1} and \Cref{sec:thm1} make the above argument precise and extend
it to higher moments. This will allow us to use the method of moments for proving \Cref{theo1}. In
\Cref{sec4}, we also apply the same idea to get all moments of the number of records in paths and
several types of trees of logarithmic height, e.g., complete binary trees, split trees, uniform
random recursive trees and scale-free trees.

\begin{figure}
    \centering
    \begin{minipage}{0.25\textwidth}
        \centering
        \includegraphics[width=0.9\textwidth]{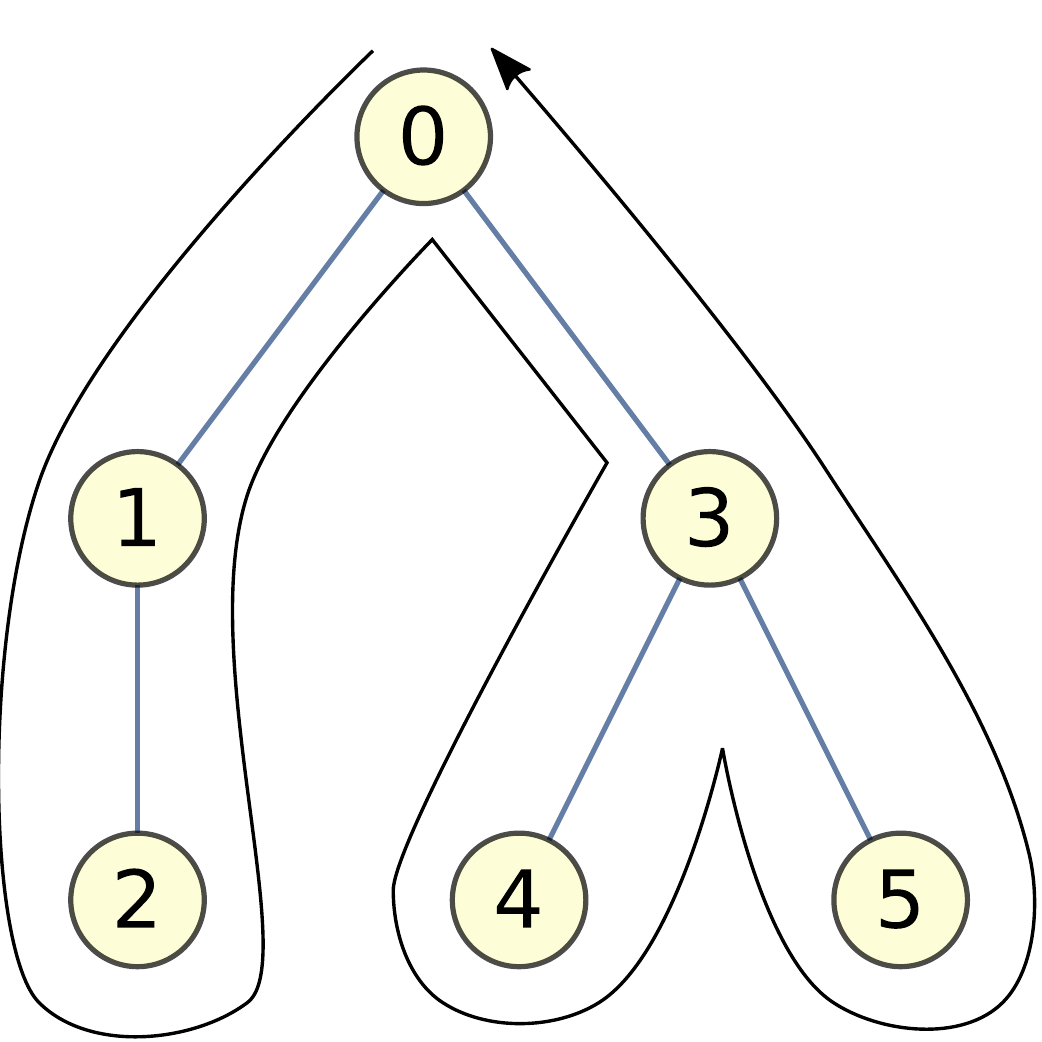} % first figure itself
    \end{minipage}\hfill
    \begin{minipage}{0.70\textwidth}
        \centering
        \includegraphics[width=0.9\textwidth]{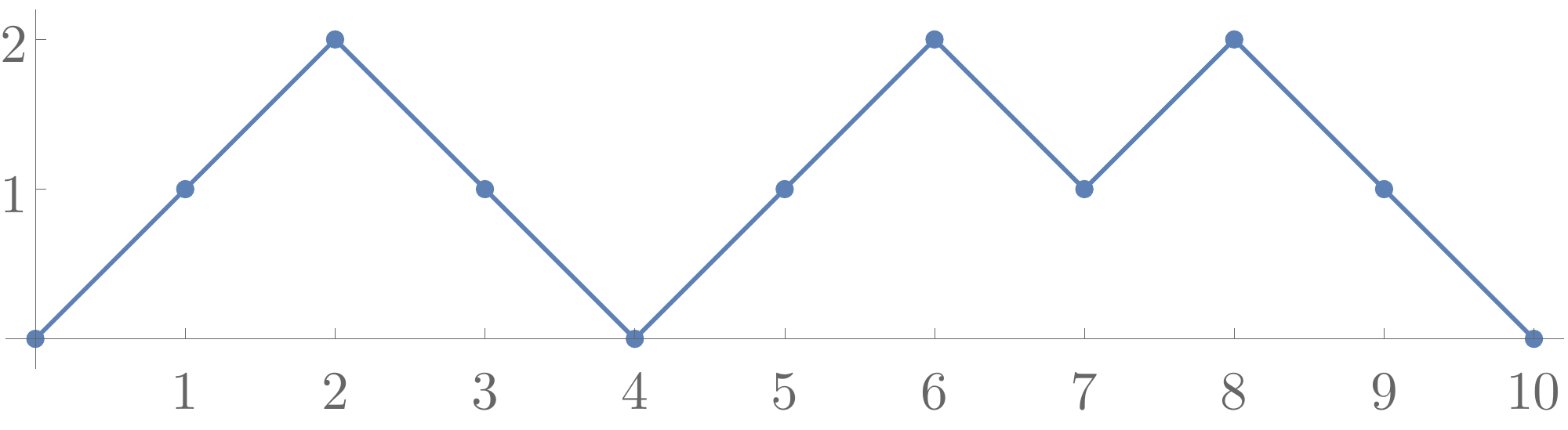} % second figure itself
    \end{minipage}
    \caption{An example of a depth-first search walk in a tree and the corresponding
        \(V_{n}\).}
    \label{fig:vn}
\end{figure}

%%%%%%%%%%%%%%%%%%%%%%%%%%%%%%%%%%%%%%%%%%%%%%%%%%%%
\section{Preliminary results}\label{sec1}
%%%%%%%%%%%%%%%%%%%%%%%%%%%%%%%%%%%%%%%%%%%%%%%%%%%%

The purpose of this section is to establish a general convergence result for the number of
$1$-records $\scK_{1}(\dsT_{n})$ of a deterministic rooted ordered tree
$\dsT_{n}$. The results of this section can also be viewed as a generalization of those in
Janson \cite{Janson2006} and in Cai, et al.\ \cite{Cai2018}. Furthermore, these results will
allow us to study the convergence of $\scK(\dsT_{n})$ not only for
conditioned Galton-Watson trees, but also for other classes of random trees in Section \ref{sec4}. We
start by defining a probability measure through a continuous function in the same spirit as in
\cite[Theorem 1.9]{Janson2006}. Let $I \subseteq \dsR_{+}$ be an interval. For a function $f:
I \rightarrow \dsR_{+}$ and $t_{1}, \dots, t_{q} \in I$ with $q \in \dsN$, we define
\begin{eqnarray} \label{eq22}
L_{f}(t_{1}, \dots, t_{q}) \coloneqq \sum_{i =1}^{q} f(t_{(i)}) - \sum_{i=1}^{q-1} \inf_{t \in [t_{(i)}, t_{(i+1)}]} f(t),
\end{eqnarray} 

\noindent where $t_{(1)}, \dots, t_{(q)}$ are $t_{1}, \dots, t_{q}$ arranged in nondecreasing order. Notice that $L_{f}(t_{1}, \dots, t_{q})$ is symmetric in $t_{1}, \dots, t_{q}$ and that $L_{f}(t) = f(t)$ for $t \in I$. Define 
\begin{eqnarray} \label{eq6}
D_{f}(t_{1}) \coloneqq L_{f}(t_{1}) \hspace*{2mm} \text{and} \hspace*{2mm} D_{f}(t_{1}, \dots, t_{q}) \coloneqq   L_{f}(t_{1}, \dots, t_{q}) - L_{f}(t_{1}, \dots, t_{q-1}), \hspace*{2mm} \text{for} \hspace*{2mm} q \geq 2. 
\end{eqnarray}

\noindent We also consider the functional
\begin{eqnarray} \label{eq23}
G_{f}(\vect_{q}, \vecx_{q}) \coloneqq \exp \left( - \frac{D_{f}(t_{1})x_{1}^{k} + \dots + D_{f}(t_{1}, \dots, t_{q})x_{q}^{k} }{k !} \right), 
\end{eqnarray}

\noindent for  $\vecx_{q} = (x_{1}, \dots, x_{q}) \in \dsR_{+}^{q}$ and $\vect_{q} = (t_{1}, \dots, t_{q}) \in I^{q}$. If $I = [0,1]$, we further define, for $q \in \dsN$, 
\begin{eqnarray} \label{eq1}
m_{0}(f) \coloneqq 1 \hspace*{2mm} \text{and} \hspace*{2mm} m_{q}(f)  \coloneqq q ! \int_{0}^{1}
\int_{0}^{1}  \cdots \int_{0}^{1} \int_{0}^{\infty} \int_{0}^{x_{1}} \dots \int_{0}^{x_{q-1}}
G_{f}(\vect_{q}, \vecx_{q})\;{\rm d} \cev{\vecx}_{q}\;{\rm d} \cev{{\bf
        t}}_{q}, \quad \text{for } q \ge 2,
\end{eqnarray}

\noindent where $\cev{\vecx}_{q} = (x_{q}, \dots, x_{1})$ and $\cev{\vect}_{q} = (t_{q}, \dots, t_{1})$.

\begin{theorem} \label{Theo2}
    Let \(k \in \dsN\).
Suppose that $f \in C([0,1], \dsR_{+})$ is such that $\int_{0}^{1} f(t)^{-1/k}{\rm d} t  < \infty$. Then there exists a unique probability measure $\nu_{f}$ on $[0,\infty)$ with finite moments given by
\begin{eqnarray*}
\int_{[0, \infty)} x^{q} \nu_{f}({\rm d} x) = m_{q}(f), \hspace*{5mm} \text{for} \hspace*{2mm} q \in \dsZ_{\ge 0}. 
\end{eqnarray*}
\end{theorem}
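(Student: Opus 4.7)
The plan is to prove Theorem \ref{Theo2} via the classical three-step approach to moment problems: show $m_q(f) < \infty$ for all $q$, verify Carleman's criterion for uniqueness, and exhibit a representing probability measure. All three steps key off a single bound on $m_q(f)$ of the form
\begin{eqnarray*}
m_q(f) \le q!\, C_k^q \left( \int_0^1 f(t)^{-1/k}\, \mathrm{d}t \right)^q
\end{eqnarray*}
for some constant $C_k$ depending only on $k$.

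The crux is therefore the bound above. The naive estimate $G_f \le \exp(-D_f(t_1) x_1^k / k!)$ leads to $m_q(f) \lesssim \int_0^1 f(t_1)^{-q/k}\, \mathrm{d}t_1$, which is inadequate because only $\int_0^1 f^{-1/k}$ is assumed finite. I would instead exploit the full factorization $G_f = \prod_{i=1}^q \exp(-D_f(t_1,\ldots,t_i) x_i^k/k!)$ and bound each one-dimensional integral by $\min(x_{i-1},\, c_k D_f(t_1, \ldots, t_i)^{-1/k})$, where $c_k = (k!)^{1/k}\Gamma(1+1/k)$. Iterating from $x_q$ inward to $x_1$ and interpolating so that every coordinate ultimately contributes the same power $-1/k$ reduces the question to estimating
\begin{eqnarray*}
J_q(f) \coloneqq \int_{[0,1]^q} \prod_{i=1}^q D_f(t_1, \ldots, t_i)^{-1/k}\, \mathrm{d}\vect_q,
\end{eqnarray*}
which in turn I would bound by $q!\,(\int_0^1 f^{-1/k})^q$ using the tree-distance interpretation of $D_f(t_1,\ldots,t_i)$ and a symmetrization over the order of arrival of the $t_i$'s.

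With this bound in hand, Carleman's condition $\sum_{q \ge 1} m_q(f)^{-1/(2q)} = \infty$ is immediate from Stirling's formula (since $m_q(f)^{1/(2q)} = O(\sqrt{q})$), yielding uniqueness. For existence I would construct a non-negative random variable $Z_f$ with $\dsE[Z_f^q] = m_q(f)$ by a continuum Poissonization mirroring \eqref{eq2}: take $\Pi$ a Poisson point process on $[0,1] \times [0,\infty)$ of unit intensity and define $Z_f$ as a continuum analogue of the $1$-record count $\scK_1(\dsT_n)$ relative to the tree coded by $f$. An application of the exponential formula for Poisson processes should match the $q$-th factorial moment of $Z_f$ with the right-hand side of \eqref{eq1}, so that $\nu_f \coloneqq \mathrm{Law}(Z_f)$ is the desired measure.

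The hard part is the key bound on $m_q(f)$. The quantity $D_f(t_1, \ldots, t_i)$ genuinely vanishes on subsets of positive Lebesgue measure---for instance, when $t_i$ lies in the $f$-genealogy of $\{t_1, \ldots, t_{i-1}\}$---so $D_f^{-1/k}$ is not pointwise integrable. Although the corresponding factor $\exp(-0 \cdot x_i^k/k!) = 1$ is perfectly harmless inside $G_f$, making this cancellation rigorous requires a careful interplay between the two regimes in the bound on $\int_0^{x_{i-1}} \exp(-D_f\, x_i^k/k!)\, \mathrm{d}x_i$ (the regime where $D_f$ is small and the interval length $x_{i-1}$ controls, versus the regime where $D_f$ is comparable to a positive constant and Gamma-type decay controls). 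It is precisely this interpolation, rather than any pointwise estimate on the integrand, that converts the hypothesis $\int_0^1 f^{-1/k} < \infty$ into a global bound on $m_q(f)$.
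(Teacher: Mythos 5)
There is a genuine gap at the step you yourself flag as ``the hard part,'' and the reduction you propose does not survive it. You reduce the key bound to estimating
$J_{q}(f) = \int_{[0,1]^{q}} \prod_{i=1}^{q} D_{f}(t_{1},\dots,t_{i})^{-1/k}\,{\rm d}\vect_{q}$,
but this quantity is genuinely infinite for the functions of interest: already for the tent function $f$ coding a path, $D_{f}(t_{1},t_{2})=0$ on the positive-measure set $\{t_{2}<t_{1}\le 1/2\}$, so $J_{2}(f)=\infty$. Your $\min(x_{i-1},\,c_{k}D_{f}^{-1/k})$ bound is correct pointwise, but the promised ``interpolation so that every coordinate contributes the same power $-1/k$'' is exactly the content of the lemma and is never specified; as written the argument bounds a finite quantity by an infinite one. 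The paper's mechanism is different and sidesteps the vanishing increments entirely: after swapping the order of integration and substituting $x_{i}=w_{i}+\cdots+w_{q}$ (so the nested domain becomes the full orthant), the exponent involves $\sum_{i\le j}D_{f}(t_{1},\dots,t_{i})\,(\cdot)$, and the superadditivity $(w_{1}+\cdots+w_{q})^{k}\ge w_{1}^{k}+\cdots+w_{q}^{k}$ factors $H_{f,q}$ into one-dimensional Gamma integrals each governed by the \emph{cumulative} length $L_{f}(t_{1},\dots,t_{j})=\sum_{i\le j}D_{f}(t_{1},\dots,t_{i})$, which is always at least $\max_{i\le j}f(t_{i})\ge f(t_{j})>0$ even when individual increments vanish. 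That telescoping is the missing idea; without it (or an equivalent substitute) your key bound is not established.

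Two smaller points. Using Carleman's criterion instead of finiteness of the moment generating function near $0$ is an immaterial variation: both follow from the bound $m_{q}(f)\le q!\,C_{k}^{q}(\int_{0}^{1}f^{-1/k})^{q}$ once it is proved. For existence, the paper also does not give details (it defers to Janson's construction via discrete approximating trees satisfying the hypotheses of \Cref{lemma3}); your proposed continuum Poisson construction is plausible in spirit but is likewise only a sketch, and matching its factorial moments to \eqref{eq1} would itself require the same control of $H_{f,q}$ that is missing above.
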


\begin{proof}
We only prove uniqueness here. The proof for existence follows along the lines of \cite[Proof of
Theorem 1.9, Pages 18-19]{Janson2006} and details are left to the interested reader. Informally
speaking, the idea in \cite{Janson2006} for the proof of existence is to build a sequence of
functions that satisfy the conditions of \Cref{lemma3} below. Define the function
\begin{eqnarray} \label{eq3}
H_{f, q}(\vect_{q})  \coloneqq \int_{0}^{\infty} \int_{0}^{x_{1}} \dots \int_{0}^{x_{q-1}} G_{f}(\vect_{q}, \vecx_{q})\;{\rm d} \cev{\vecx}_{q}.
\end{eqnarray}

\noindent By changing the order of integration, we obtain that
\begin{eqnarray*}
H_{f, q}(\vect_{q}) =  \int_{0}^{\infty} \int_{x_{q}}^{\infty} \dots \int_{x_{2}}^{\infty}
G_{f}(\vect_{q}, \vecx_{q})\;{\rm d} \vecx_{q},
\end{eqnarray*}
%\noindent Define the functional
%\begin{eqnarray*}
%\bar{G}_{f}(\vect_{q}, \vecx_{q}) \coloneqq 
%\exp \left( - \frac{D_{f}(t_{1})(x_{1}+\dots + x_{q})^{k} + D_{f}(t_{1}, t_{2})(x_{1}+\dots + x_{q-1})^{k} + \dots + D_{f}(t_{1}, \dots, t_{q})x_{q}^{k} }{k !} \right), 
%\end{eqnarray*}

\noindent for $\vecx_{q} = (x_{1}, \dots, x_{q}) \in \dsR_{+}^{q}$ and $\vect_{q} = (t_{1}, \dots, t_{q}) \in [0,1]^{q}$.
By making the change of variables $x_{q} = w_{q}, x_{q-1}= w_{q} + w_{q-1}, \dots, x_{1}= w_{q} + \cdots + w_{1}$, we see that
\begin{eqnarray*}
H_{f,q}(\vect_{q}) = 
\bigintss_{[0,\infty)^{q}} 
\exp
\left( 
    - \frac{1}{k!}
    \sum_{i=1}^{q}
    D_{f}(t_{1},\dots, t_{i})
    \left( 
    \sum_{j=i}^{q}
    w_{j}
    \right)^{k}
\right)
{\rm d} \vecw_{q},
\end{eqnarray*}

\noindent where $\vecw_{q} = (w_{1}, \dots, w_{q}) \in \dsR_{+}^{q}$. From the inequality $(x_{1}+\cdots + x_{q})^{k} \geq x_{1}^{k} + \cdots + x_{q}^{k}$, we observe that
%\begin{eqnarray*}
%H_{f,q}(\vect_{q}) \leq \int_{0}^{\infty} \int_{0}^{\infty} \dots \int_{0}^{\infty} \widehat{G}_{f}(\vect_{q}, \vecw_{q})\;{\rm d} \vecw_{q},
%\end{eqnarray*}
%
%\noindent where 
%\begin{eqnarray*}
%\widehat{G}_{f}(\vect_{q}, \vecw_{q}) \coloneqq \exp \left( - \frac{D_{f}(t_{1})(w_{1}^{k}+\dots + w_{q}^{k}) + D_{f}(t_{1}, t_{2})(w_{1}^{k}+\dots + w_{q-1}^{k}) + \dots + D_{f}(t_{1}, \dots, t_{q})w_{q}^{k} }{k !} \right).
%\end{eqnarray*}
%
%\noindent After some simple but tedious computation one can readily see that
\begin{eqnarray}
H_{f,q}(\vect_{q}) 
& \leq &
\prod_{j=1}^{q} \int_{0}^{\infty} 
\exp\left( 
    - \frac{w_{j}^{k}}{k!} \sum_{i=1}^{j} D_{f}(t_{1},\dots,t_{i})
\right)
\, \mathrm{d}w_{j} \nonumber
\\
& = & \Gamma \left(1 + 1/k \right)^{q} \Gamma(1+k)^{q/k} \prod_{j=1}^{q}  \left( \sum_{i=1}^{j} D_{f}(t_{1}, \dots, t_{i}) \right)^{-1/k} \nonumber \\
& = & \Gamma \left(1 + 1/k \right)^{q} \Gamma(1+k)^{q/k}  \prod_{i=1}^{q}   L_{f}(t_{1}, \dots, t_{i})^{-1/k} \nonumber \\
& \leq & \Gamma \left(1 + 1/k \right)^{q} \Gamma(1+k)^{q/k} \prod_{i=1}^{q}  f(t_{i})^{-1/k}, \label{eq5}
\end{eqnarray}

\noindent where for the last inequality we have used the fact that $L_{f}(t_{1}, \dots, t_{i}) \geq
\max_{1 \leq j \leq i} f(t_{j})$, for $1 \leq i \leq q$. The later follows from the symmetry of
$L_{f}$; see \cite[Lemma 4.1]{Janson2006} for a proof. Then, the previous inequality allows us to
conclude that  
\begin{eqnarray*}
0 \leq m_{q}(f) \leq q ! \, \Gamma \left(1 + 1/k \right)^{q} \Gamma(1+k)^{q/k} \left( \int_{0}^{1} f(t)^{-1/k}{\rm d} t  \right)^{q}.
\end{eqnarray*}

\noindent We conclude that there exists $a >0$ such that $\sum_{q=0}^{\infty}
m_{q}(f) \frac{x^{q}}{q !} < \infty$, for $0 \leq x < a$. Then a probability measure with moments
$m_{q}(f)$ has a finite generating function in a neighbourhood of $0$. Thus, it is well-known that this
implies that the probability measure is unique; see, e.g., \cite[Section 4.10]{Gut2013}. 
\end{proof}

Consider a rooted ordered tree $\dsT_{n}$ with root $\circ$ and $n \in \dsN$ vertices.
We now explain how $\dsT_{n}$ can be encoded by a continuous function. We define the so-called
{\sl depth-first search function} \cite[page 260]{Aldous19912}, $\psi_{n}:\{0,1, \dots, 2(n-1)\}
\rightarrow \{\, \text{vertices of} \, \, \dsT_{n} \}$ such that \(\psi_{n}(i)\) is the
\((i+1)\)-th vertex visited in a depth-first walk on the tree starting from the root \(\circ\). Note that
$\psi_{n}(i)$ and $\psi_{n}(i+1)$ always are neighbours, and thus, we extend $\psi$ to $[0,2(n-1)]$
by letting, for $1 \leq i < t < i+1 \leq 2(n-1)$, $\psi_{n}(t)$ to be the one of $\psi_{n}(i)$ and
$\psi_{n}(i+1)$ that has largest depth (recall that the depth of a vertex $v \in \dsT_{n}$ is the
distance, i.e., number of edges, between $\circ$ to $v$). Let $d_{n}(v)$ be the depth of a vertex $v
\in \dsT_{n}$. We further define the {\sl depth-first walk} $V_{n}$ of $\dsT_{n}$ by 
\begin{eqnarray*}
V_{n}(i) \coloneqq d_{n}(\psi(i)), \hspace*{5mm} i \in \{0, \dots, 2(n-1)\},
\end{eqnarray*}

\noindent and extend $V_{n}$ to $[0,2(n-1)]$ by linear interpolation. Thus $V_{n}
\in C([0,2(n-1)], \dsR_{+})$. See \Cref{fig:vn} for an example of \(V_{n}\). Furthermore, we normalize the domain of $V_{n}$ to $[0, 1]$ by
defining 
\begin{eqnarray} \label{eq35}
\widetilde{V}_{n}(t) \coloneqq V_{n}(2(n-1)t) \hspace*{4mm} \text{and} \hspace*{4mm} \widehat{V}_{n}(t) \coloneqq \lceil V_{n}(2(n-1)t) \rceil,
\end{eqnarray}

\noindent for $t \in [0,1]$. Thus $\widetilde{V}_{n} \in C([0,1], \dsR_{+})$. Note that $d_{n}(\psi(t)) = \lceil V_{n}(t) \rceil$, for $t \in [0,2(n-1)]$. Moreover,
\begin{eqnarray} \label{eq10}
\max_{v \in \dsT_{n}} d_{n}(v) = \sup_{t \in [0, 2(n-1)]} V_{n}(t) = \sup_{t \in [0, 1]} \widetilde{V}_{n}(t). 
\end{eqnarray}

We now state the central result of this section, that is, a general limit theorem in distribution for the number of $1$-records $\scK_{1}(\dsT_{n})$ of a deterministic rooted tree $\dsT_{n}$ with $n$ vertices. It is important to notice that  $\scK_{1}(\dsT_{n})$ is a random variable since the $1$-records are random. From now on, we always assume that $k \geq 2$. 

\begin{lemma} \label{lemma3}
Suppose that $(\dsT_{n})_{n \geq 1}$ is a sequence of ordered (deterministic) rooted trees, and denote the corresponding normalized depth-first walks by $\widetilde{V}_{n}$ and $\widehat{V}_{n}$. Suppose that there exists a sequence $(a_{n})_{n \geq 1}$ of non-negative real numbers with $\lim_{n \rightarrow \infty} a_{n} = 0$, $\lim_{n \rightarrow \infty} na_{n}^{1/k} = \infty$ and a function $f \in C([0,1], \dsR_{+})$ such that
\begin{itemize}
\item[(a)] $ \displaystyle a_{n} \widetilde{V}_{n}(t) \rightarrow f(t)$, in $C([0,1], \dsR_{+})$, as $n \rightarrow \infty$. 

\item[(b)] $ \displaystyle \int_{0}^{1} ( a_{n}\widehat{V}_{n}(t) )^{-1/k}\;{\rm d}t  \rightarrow \int_{0}^{1} f(t)^{-1/k}\;{\rm d}t < \infty$, as $n \rightarrow \infty$. 
\end{itemize}

\noindent Then, for each $q \in \dsZ_{\ge 0}$, 
\[
    n^{-q}a_{n}^{-q/k}  \dsE[\scK_{1}(\dsT_{n})^{q}] 
    \rightarrow m_{q}(f),
\]
as $n \rightarrow \infty$, where $m_{q}(f)$ is defined in \eqref{eq1}. Moreover, $n^{-1} a_{n}^{-1/k} \scK_{1}(\dsT_{n}) \inlaw Z_{f}$, as $n \rightarrow \infty$, where $Z_{f}$ is a random variable with distribution $\nu_{f}$ defined by \Cref{Theo2}. 
\end{lemma}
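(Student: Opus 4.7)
The plan is to prove the moment convergence $n^{-q} a_n^{-q/k} \dsE[\scK_1(\dsT_n)^q] \to m_q(f)$ for each $q \in \dsZ_{\ge 0}$, and then invoke the method of moments. Since \Cref{Theo2} guarantees that the limit law $\nu_f$ is uniquely determined by its moments $m_q(f)$, moment convergence automatically upgrades to $n^{-1} a_n^{-1/k} \scK_1(\dsT_n) \inlaw Z_f$.

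First, I would expand
\[
\scK_1(\dsT_n)^q = \sum_{(v_1, \dots, v_q) \in \dsT_n^q} I_{1, v_1} \cdots I_{1, v_q},
\]
and split the sum according to whether the coordinates are all distinct. Using the pointwise estimate $\dsE[I_{1, v}] = O(d_n(v)^{-1/k})$ from \cite[Lemma 2.1]{Cai2018}, tuples with a repeat contribute at most $O(n^{q-1} a_n^{(q-1)/k})$, which becomes $O((na_n^{1/k})^{-1})$ after dividing by $n^q a_n^{q/k}$ and vanishes by the hypothesis $n a_n^{1/k} \to \infty$. For a distinct tuple, conditioning on $G_{1, v_i} = x_i$ and using the independence of the $\mathrm{Gamma}(k)$ clocks $G_{k, u}$ across vertices (the small correction arising when some $v_i$ is itself an ancestor of another $v_j$, which turns $G_{k, v_i}$ into $x_i$ plus an independent $\mathrm{Gamma}(k-1)$ variable, is easily checked to be negligible in the limit) yields
\[
\dsE[I_{1, v_1}\cdots I_{1, v_q}] \approx \int_{\dsR_+^q} e^{-\sum_i x_i} \prod_{u \in \bigcup_i A(v_i)} \mathbb{P}\bigl(G_{k, u} > \max\{x_i : u \in A(v_i)\}\bigr) d\vecx_q.
\]

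Restricting the integral to $x_1 > \cdots > x_q > 0$ (which introduces a factor $q!$), the multiplicity with which $x_j^k$ appears in $\sum_u (\max_{i : u \in A(v_i)} x_i)^k$ equals $\ell_j - \ell_{j-1}$, where $\ell_j$ is the number of edges of the minimal subtree joining $\circ$ and $v_1, \dots, v_j$. Substituting $x_i = a_n^{1/k} y_i$ (Jacobian $a_n^{q/k}$) and using $\mathbb{P}(G_{k, u} > y) = \exp(-y^k/k!)(1 + O(y^{k+1}))$ for small $y$, together with the bound $|\bigcup_i A(v_i)| \leq \ell_q = O(a_n^{-1})$, makes the rescaled integrand converge pointwise to $\exp\bigl(-\sum_j a_n(\ell_j - \ell_{j-1}) y_j^k / k!\bigr)$. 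Finally, translating each sum $\sum_{v \neq \circ}$ into $(n-1) \int_0^1 dt$ via the depth-first walk, and noting that by condition~(a) the length $a_n \ell_j = L_{a_n \widetilde{V}_n}(t_1, \dots, t_j)$ converges uniformly to $L_f(t_1, \dots, t_j)$ (hence $a_n(\ell_j - \ell_{j-1}) \to D_f(t_1, \dots, t_j)$), the integrand converges pointwise to $G_f(\vect_q, \vecy_q)$, formally producing $m_q(f)$.

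The main obstacle is justifying the interchange of limit and integration, since the integrand depends on $n$ through $\widehat{V}_n$ and through the combinatorics of ancestors. To produce a uniform integrable majorant, I would apply inequality \eqref{eq5} with $\widehat{V}_n$ replacing $f$, bounding the $\vecy$-integral by $\Gamma(1+1/k)^q \Gamma(1+k)^{q/k} \prod_{i=1}^q \widehat{V}_n(t_i)^{-1/k}$. Integrating over $\vect_q \in [0,1]^q$ factors this into $q$ copies of $\int_0^1 \widehat{V}_n(t)^{-1/k} dt$, which by condition~(b) equals $a_n^{1/k}\bigl(\int_0^1 f(t)^{-1/k} dt + o(1)\bigr)$. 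This precisely absorbs the normalization factor $a_n^{q/k}$ and stays uniformly bounded in $n$, so dominated convergence delivers the required moment convergence, completing the proof.
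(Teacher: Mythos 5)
Your proposal is correct and follows essentially the same route as the paper's own proof (there organized into \Cref{lemma1}, \Cref{lemma2} and the proof of \Cref{lemma3}): expand the $q$-th moment over $q$-tuples of non-root vertices, discard the lower-order diagonal and ancestor--descendant corrections, write the joint record probability as a Gamma-tail integral whose exponents are the incremental spanning-subtree lengths $D_{n}(v_{1},\dots,v_{i})$, transfer the vertex sums to the depth-first walk, rescale by $a_{n}^{1/k}$, and deduce the limit from conditions (a) and (b) before closing with the method of moments and \Cref{Theo2}. The one step to tighten is the final interchange of limit and integral: because your majorant $C_{k,q}\prod_{i}(a_{n}\widehat{V}_{n}(t_{i}))^{-1/k}$ depends on $n$, ``uniformly bounded plus dominated convergence'' is not a valid theorem --- you need exactly what condition (b) provides, namely \emph{convergence} of the majorants' integrals to the integral of their pointwise limit, so that the uniform-integrability criterion \eqref{eq9} (i.e.\ \cite[Proposition 4.12]{Ka2002}, combined with a Pratt-type domination argument as in the paper's appeal to \cite[Theorem 4.5]{Gut2013}) justifies the passage to the limit.
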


Before proving \Cref{lemma3}, we need to establish some preliminary results
and to introduce some further notation. For $q \in \dsN$ and vertices $v_{1}, \dots, v_{q} \in \dsT_{n}$, let
$L_{n}(v_{1}, \dots, v_{q})$ be the number of edges in the subtree of $\dsT_{n}$ spanned by $v_{1},
\dots, v_{q}$ and its root $\circ$ (i.e., the minimal number of edges that are needed to connect $v_{1}, \dots, v_{q}$ and $\circ$). We write $D_{n}(v_{1}) \coloneqq L_{n}(v_{1})$ and $D_{n}(v_{1},
\dots, v_{q}) \coloneqq L_{n}(v_{1}, \dots, v_{q}) - L_{n}(v_{1}, \dots, v_{q-1})$ for $q \geq 2$.
We also consider the functional
\begin{eqnarray} \label{eq29}
G_{n}(\vecv_{q}, \vecx_{q}) \coloneqq \exp \left( - \frac{D_{n}(v_{1})x_{1}^{k} + \dots + D_{n}(v_{1}, \dots, v_{q})x_{q}^{k} }{k !} \right), 
\end{eqnarray}

\noindent for $\vecx_{q} = (x_{1}, \dots, x_{q}) \in \dsR_{+}^{q}$ and $\vecv_{q} = (v_{1}, \dots,
v_{q}) \in \dsT_{n}^{q}$. We denote by $\Gamma(k, \cdot)$ the upper incomplete gamma function of
parameter $k \in \dsN$, i.e.,
\begin{equation*}\label{MHVXZ}
 \Gamma(k,x) = \int_{x}^{\infty} t^{k-1} e^{-t} \, \mathrm{d}t, \hspace*{4mm} \text{for} \hspace*{2mm} x \geq 0.  
\end{equation*}

\begin{remark} \label{remark1}
Let $\dsT_{n}$ be an ordered (deterministic) rooted tree with depth-first search walk $\psi_{n}$ and the corresponding function $V_{n}$. It is not difficult to see that $L_{n}$ and $L_{\lceil V_{n} \rceil}$ are connected, in
the sense that $L_{n}(\psi_{n}(t_{1}), \dots, \psi_{n}(t_{q})) = L_{\lceil V_{n} \rceil}(t_{1},
\dots, t_{q})$ for $t_{1}, \dots, t_{q} \in [0,2(n-1)]$; see \cite[Lemma 4.4]{Janson2006} for a
proof of this fact.  
\end{remark}

\begin{lemma} \label{lemma2}
Let $\dsT_{n}$ be an ordered (deterministic) rooted tree with $n \in \dsN$ vertices. Suppose that there
exists a sequence $(a_{n})_{n \geq 1}$ of non-negative real numbers such that $\lim_{n \rightarrow \infty} a_{n} = 0$ and \(\max_{v \in\dsT_{n}} d_{n}(v) = O(a_{n}^{-1})\). Let $\alpha
\coloneqq \frac{1}{2}\left(\frac{1}{k} + \frac{1}{k+1} \right)$ and $x_{0}  \coloneqq
a_{n}^{\alpha}$. Then, for $q \in \dsN$ and uniformly for all $x \in [0, x_{0}]$,
\begin{eqnarray*}
\dsP({\rm Gamma}(k) > x)^{D_{n}(v_{1}, \dots, v_{q})} = \left( \frac{\Gamma(k,x)}{\Gamma(k)} \right)^{D_{n}(v_{1}, \dots, v_{q})} = (1 + O ( a_{n}^{\frac{1}{2k}} ) ) \exp \left( - \frac{D_{n}(v_{1}, \dots, v_{q})x^{k}}{k!} \right),
\end{eqnarray*}

\noindent where the vertices $v_{1}, \dots, v_{q} \in \dsT_{n}$.
\end{lemma}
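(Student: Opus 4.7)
The plan is to reduce the statement to a direct Taylor-expansion estimate on $\log(\Gamma(k,x)/\Gamma(k))$ near $x = 0$, and then exponentiate after multiplying by the integer $N \coloneqq D_n(v_1,\dots,v_q)$. The first equality is immediate from the definition $\dsP(\mathrm{Gamma}(k) > x) = \frac{1}{\Gamma(k)}\int_x^\infty t^{k-1}e^{-t}\,\mathrm{d}t = \Gamma(k,x)/\Gamma(k)$, so all the content is in the second equality.

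For the main estimate, I would first observe that by assumption $N \leq L_n(v_1,\dots,v_q) \leq q \max_{v \in \dsT_n} d_n(v) = O(a_n^{-1})$, where the implicit constant depends on $q$ but not on $n$ or the vertices. Next, using the lower incomplete gamma expansion $\gamma(k,x) = \int_0^x t^{k-1}e^{-t}\,\mathrm{d}t = \frac{x^k}{k} - \frac{x^{k+1}}{k+1} + O(x^{k+2})$ for $x \in [0, x_0]$ (with $x_0 = a_n^\alpha \to 0$), one gets
\[
\frac{\Gamma(k,x)}{\Gamma(k)} = 1 - \frac{x^k}{k!} + O(x^{k+1}).
\]
Taking logarithms and using $\log(1-u) = -u + O(u^2)$ for small $u$, and noting that $x^{2k} = O(x^{k+1})$ when $k \geq 2$ and $x$ is small, yields
\[
\log \frac{\Gamma(k,x)}{\Gamma(k)} = -\frac{x^k}{k!} + O(x^{k+1}),
\]
uniformly for $x \in [0, x_0]$.

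I would then multiply by $N$ to get $N \log(\Gamma(k,x)/\Gamma(k)) = -\tfrac{N x^k}{k!} + O(N x^{k+1})$. The essential arithmetic is the calibration of $\alpha$: since $\alpha = \tfrac{1}{2}(\tfrac{1}{k} + \tfrac{1}{k+1}) = \tfrac{2k+1}{2k(k+1)}$, we have $\alpha(k+1) = 1 + \tfrac{1}{2k}$, so that for $x \leq x_0 = a_n^\alpha$,
\[
N x^{k+1} \leq C\, a_n^{-1}\, a_n^{\alpha(k+1)} = C\, a_n^{1/(2k)}.
\]
Exponentiating and using $e^{O(a_n^{1/(2k)})} = 1 + O(a_n^{1/(2k)})$ (since $a_n \to 0$) gives exactly the stated conclusion.

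The proof is essentially a careful Taylor expansion, so no substantial obstacle is expected. The only delicate point is making sure the remainders in both the expansion of $\Gamma(k,x)/\Gamma(k)$ and in the logarithm are uniform over $x \in [0, x_0]$, and checking that the specific choice of $\alpha$ in the hypothesis is exactly what is needed to convert the naive bound $N x^{k+1} = O(a_n^{\alpha(k+1)-1})$ into $O(a_n^{1/(2k)})$. All higher-order terms (e.g.\ $N x^{k+2}$) are automatically smaller provided $k \geq 2$, since $\alpha(k+2) - 1 > \alpha(k+1) - 1 = 1/(2k)$.
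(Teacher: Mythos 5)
Your proof is correct and complete; the paper itself gives no details for this lemma, deferring entirely to the proof of Lemma 5.1 in \cite{Cai2018}, which proceeds by exactly this Taylor expansion of $\Gamma(k,x)/\Gamma(k)$ together with the same calibration $\alpha(k+1)-1 = 1/(2k)$. The only cosmetic remark is that the absorption $x^{2k}=O(x^{k+1})$ already holds for all $k\ge 1$, not just $k\ge 2$, so that step needs no appeal to the standing assumption $k\ge 2$.
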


\begin{proof}
    Our claim can be shown along the lines of \cite[Proof of Lemma 5.1]{Cai2018}. 
\end{proof}

Recall that for two sequences of non-negative real numbers $(A_{n})_{n \geq 1}$ and $(B_{n})_{n \geq
1}$ such that $B_{n} >0$, one writes $A_{n}=o(B_{n})$ if $\lim_{n \rightarrow \infty} A_{n}/B_{n} =
0$.

\begin{lemma} \label{lemma1}
Let $\dsT_{n}$ be an ordered (deterministic) rooted tree with $n \in \dsN$ vertices. Suppose that there
exists a sequence $(a_{n})_{n \geq 1}$ of non-negative real numbers with $\lim_{n \rightarrow \infty}
a_{n} = 0$, $\lim_{n \rightarrow \infty} na_{n}^{1/k} = \infty$ and \(\max_{v \in \dsT_{n}} d_{n}(v) = O(a_{n}^{-1})\). Then the moments of $\scK_{1}(\dsT_{n})$
are given by 
\begin{eqnarray*}
n^{-q} a_{n}^{-q/k}  \dsE[\scK_{1}(\dsT_{n})^{q}] = 
(1 + O ( a_{n}^{\frac{q}{2k}} ) )  q! \int_{0}^{1}  \cdots \int_{0}^{1} \bar{H}_{n,q}(\vect_{q})\;{\rm d} \cev{\vect}_{q} + o(1),
\end{eqnarray*}

\noindent where 
\begin{eqnarray} \label{eq7}
\bar{H}_{n,q}(\vect_{q}) \coloneqq \int_{0}^{\infty} \int_{0}^{x_{1}} \cdots \int_{0}^{x_{q-1}}
G_{a_{n}\widehat{V}_{n}}(\vect_{q}, \vecx_{q})  \exp\left({- a_{n}^{1/k}\sum_{i=1}^{q}x_{i}}\right)\;{\rm d} \cev{\vecx}_{q}, \hspace*{4mm} \text{for} \hspace*{2mm} q \in \dsN. 
\end{eqnarray}
\end{lemma}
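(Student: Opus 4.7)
My plan is to expand the $q$th moment combinatorially, evaluate each summand via the exponential-clock representation, and translate the vertex sum into a Riemann integral on $[0,1]^q$ using the depth-first walk. Starting from
\begin{equation*}
    \dsE[\scK_1(\dsT_n)^q] = \sum_{(v_1,\ldots,v_q)\in\dsT_n^q} \dsE\Bigl[\prod_{i=1}^q I_{1,v_i}\Bigr]
\end{equation*}
and conditioning on the i.i.d.\ $\mathrm{Exp}(1)$ variables $(G_{1,v_i})_i$, independence of the alarm clocks $(G_{k,u})_u$ at distinct vertices yields
\begin{equation*}
    \dsE\Bigl[\prod_{i=1}^q I_{1,v_i}\Bigr] = \int_{[0,\infty)^q} e^{-\sum_i x_i} \prod_{u:\,S(u)\neq\emptyset} \dsP\bigl(\mathrm{Gamma}(k) > \textstyle\max_{i\in S(u)} x_i\bigr)\,\mathrm{d}\vecx,
\end{equation*}
with $S(u) \coloneqq \{i : u\text{ is a strict ancestor of } v_i\}$. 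Partitioning $[0,\infty)^q$ by the ordering of the coordinates and exchanging the sum over permutations with the sum over tuples via $(\vecv,\pi)\mapsto (v_{\pi(1)},\ldots,v_{\pi(q)})$ produces a factor $q!$ and restricts the integration to $y_1 > \cdots > y_q > 0$ with integrand $e^{-\sum y_j}\prod_j \dsP(\mathrm{Gamma}(k) > y_j)^{c_j(\vecv)}$, where $c_j(\vecv)$ counts strict ancestors of $v_j$ that are not strict ancestors of any of $v_1,\ldots,v_{j-1}$. A direct check yields $c_1(\vecv) = D_n(v_1)$ and $c_j(\vecv) = D_n(v_1,\ldots,v_j)-1$ for $j\ge 2$ for generic tuples (distinct $v_i$ with no pairwise ancestor relations); tuples with coincidences, ancestor relations, or a coordinate equal to $\circ$ are handled separately and shown to contribute $o(n^q a_n^{q/k})$.

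Next, split the $\vecy$-integration at $x_0 \coloneqq a_n^\alpha$. On the bulk $\{\max_j y_j \le x_0\}$, \Cref{lemma2} gives
\begin{equation*}
    \dsP(\mathrm{Gamma}(k) > y_j)^{D_n(v_1,\ldots,v_j)} = (1 + O(a_n^{1/(2k)}))\exp\Bigl(-\tfrac{D_n(v_1,\ldots,v_j)\,y_j^k}{k!}\Bigr)
\end{equation*}
uniformly in $(v_1,\ldots,v_j)$, while the additional $\dsP(\mathrm{Gamma}(k)>y_j)^{-1} = 1 + O(y_j^k) = 1 + O(a_n^{k\alpha})$ coming from the $-1$ in $c_j$ (for $j\ge 2$) is absorbed into the same multiplicative error. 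On the tail $\{\max_j y_j > x_0\}$, the crude bound $\dsP(\mathrm{Gamma}(k)>y)^{D_n} \le \exp(-c D_n\min(y,y^k))$ combined with $D_n = O(a_n^{-1})$ and $k\alpha < 1$ shows that $D_n x_0^k \ge c' a_n^{k\alpha-1} \to \infty$, so the integrand is super-exponentially small and the tail contributes $o(1)$ after normalization by $n^q a_n^{q/k}$.

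Finally, change variables $y_j = a_n^{1/k} x_j$: the Jacobian contributes $a_n^{q/k}$, $e^{-\sum y_j}$ becomes $e^{-a_n^{1/k}\sum x_j}$, and $\exp(-D_n(v_1,\ldots,v_j) y_j^k/k!)$ becomes the $G_{a_n\widehat V_n}$-type factor once vertices are replaced by contour positions via the exact identity
\begin{equation*}
    \sum_{v\neq\circ} F(v) = \tfrac{1}{2}\int_0^{2(n-1)} F(\psi_n(s))\,\mathrm{d}s,
\end{equation*}
valid because each non-root vertex is the deeper endpoint on precisely two unit sub-intervals of the contour (one from its descent and one from its ascent). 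Iterating for $q$-tuples and rescaling $s = 2(n-1)t$ produces the factor $(n-1)^q$ and the integration domain $[0,1]^q$; by \Cref{remark1}, $D_n(\psi_n(2(n-1)\vect))$ equals $D_{\widehat V_n}(\vect)$, turning the exponential factor into $G_{a_n\widehat V_n}(\vect,\vecx)$. Extending the truncated $x$-range $[0,a_n^{\alpha-1/k}]$ to $(0,\infty)$ at a cost dominated by the $e^{-a_n^{1/k}\sum x_j}$-damping produces the inner integral $\bar H_{n,q}(\vect)$; after absorbing $((n-1)/n)^q = 1 + O(1/n)$ into the error, the claim follows. The main obstacle will be quantitative: showing that both the tail $\{\max_j y_j > x_0\}$ and the non-generic tuples contribute $o(1)$ at the correct rate requires uniform pointwise control of $\dsP(\mathrm{Gamma}(k)>y)^D$ across the full $(y,D)$-range together with careful accounting of $c_j(\vecv)$ in every degenerate configuration.
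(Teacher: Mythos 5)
Your proposal follows essentially the same route as the paper's proof: the $q!$-ordered expansion of the $q$-th moment over vertex tuples, the exact iterated-integral representation with $\dsP({\rm Gamma}(k)>\cdot)$ raised to ancestor counts, truncation at $x_{0}=a_{n}^{\alpha}$ with \Cref{lemma2} on the bulk and an exponential tail bound invoking $\max_{v}d_{n}(v)=O(a_{n}^{-1})$, conversion of the vertex sum to a contour integral via \Cref{remark1}, and the substitution $x=a_{n}^{1/k}w$. The only differences are bookkeeping: the paper removes the root at the outset by writing $\scK_{1}(\dsT_{n})^{q}=(\scK_{1}(\dsT_{n})-1)^{q}+Y_{q}$ and disposes of $Y_{q}$ (and implicitly of the diagonal tuples) by induction on $q$, whereas you sum over all of $\dsT_{n}^{q}$ and defer the degenerate tuples to a separate $o(n^{q}a_{n}^{q/k})$ estimate; and your exact ancestor count $c_{j}(\vecv)=D_{n}(v_{1},\dots,v_{j})-1$ for generic tuples is a small correction to the exponent the paper writes down directly, which you rightly absorb into the $1+O(a_{n}^{1/(2k)})$ factor.
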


\begin{proof}
For simplicity, we write $X_{q} \coloneqq \scK_{1}(\dsT_{n})^{q}$ for $q \in \dsZ_{\ge 0}$ and note that $X_{q} = X_{1}^{q}$. For $q \in \dsN$, we observe that
\begin{eqnarray*}
X_{q} = ( X_{1} -1+1)^{q} = (X_{1}-1)^{q} + \sum_{p=0}^{q-1} \binom{q}{p} (X_{1}-1)^{p} =  (X_{1}-1)^{q} + Y_{q}.
\end{eqnarray*}

\noindent where $Y_{q} \coloneqq \sum_{p=0}^{q-1} \sum_{l=0}^{p}  \binom{q}{p} \binom{p}{l} (-1)^{p-l} X_{l}$. Recall that $I_{1,v}$ is the indicator that $v \in \dsT_{n}$ is a $1$-record defined in (\ref{eq2}). By the previous identity, we have that 
\begin{eqnarray*}
X_{q} = \sum_{v_{1}, \dots, v_{q} \in \dsT_{n} \setminus \{\circ \} } I_{1, v_{1}} \cdots I_{1, v_{q}} + Y_{q}= q! \sum_{v_{1}, \dots, v_{q} \in \dsT_{n} \setminus \{\circ \}  } \llbracket \scE(v_{1}, \dots, v_{q}) \rrbracket + Y_{q}
\end{eqnarray*}

\noindent where $\scE(v_{1},
\dots, v_{q}) \coloneqq \{ E_{1,v_{q}} < \cdots < E_{1,v_{1}} \, \, \text{and} \, \, v_{1}, \dots,
v_{q} \, \, \text{are all $1$-records} \}$; recall that $E_{1,v_{1}}, \dots, E_{1,v_{q}}$ are
independent random variables with an $\text{Exp}(1)$ distribution. To see the last identity, note
that each product $I_{1, v_{1}} \cdots I_{1, v_{q}}$ occurs $q!$ times with indices permuted and for
exactly one of these permutations we have that $E_{1,v_{q}} < \cdots < E_{1,v_{1}}$. 

Consider the simple case \(q=2\).  Conditioning on \(E_{1,v_{2}} = x_{2} < E_{1,v_{1}} =
    x_{1}\), we see that \(v_{1}\) and \(v_{2}\) are both \(1\)-records, if and only if, the following two events happen:
\begin{itemize}
    \item[(i)] the \(D_{n}(v_{1})\) ancestors of \(v_{1}\) are removed after time \(x_{1}\);
    \item[(ii)] the \(D_{n}(v_{1}, v_{2})\) vertices which are ancestors of \(v_{2}\) but not of \(v_{1}\) are removed after time \(x_{2}\).
\end{itemize}

\noindent Since \(x_{2} < x_{1}\), we note that the event (i) implies that the vertices which are both the ancestors of
\(v_{1}\) and \(v_{2}\) are removed after \(x_{1}\). 
Let $g(x) \coloneqq \dsP(\text{Gamma}(k) > x)$ for $x \in \mathbb{R}_{+}$. Since the events (i) and (ii) are independent,  we have
\begin{equation}\label{eq:scE:2}
    \p{\scE(v_{1},v_{2})} = 
    \int_{0}^{\infty}
    \int_{0}^{x_{1}}
    g(x_{1})^{D_{n}(v_{1})} g(x_{2})^{D_{n}(v_{1}, v_{2})} e^{-x_{1} - x_{2}} 
    \;
    \mathrm{d}x_{2}
    \;
    \mathrm{d}x_{1}.
\end{equation}

\noindent Recall that we are assuming \(k \ge 2\). Otherwise, when \(k=1\), the above equality is not entirely correct since \(\scE(v_{1}, v_{2})\) is impossible if \(v_{2}\) is
an ancestor of \(v_{1}\); see \cite[Lemma 4.3]{Janson2006} for details in the case $k=1$.

By generalizing the previous argument to \(q \in \dsN\), we see that
\begin{align*}
\dsP(\scE(v_{1}, \dots, v_{q})) & = \int_{0}^{\infty} \int_{0}^{x_{1}} \cdots \int_{0}^{x_{q-1}} g(x_{1})^{D_{n}(v_{1})} g(x_{2})^{D_{n}(v_{1}, v_{2})} \cdots g(x_{q})^{D_{n}(v_{1}, \dots, v_{q})} e^{- \sum_{i=1}^{q}x_{i}}\;{\rm d} \cev{\vecx}_{q} \\
& = \int_{0}^{x_{0}} \int_{0}^{x_{1}} \cdots \int_{0}^{x_{q-1}} g(x_{1})^{D_{n}(v_{1})} g(x_{2})^{D_{n}(v_{1}, v_{2})} \cdots g(x_{q})^{D_{n}(v_{1}, \dots, v_{q})} e^{- \sum_{i=1}^{q}x_{i}}\;{\rm d} \cev{\vecx}_{q} \\
& \hspace*{5mm} +\int_{x_{0}}^{\infty} \int_{0}^{x_{1}} \cdots \int_{0}^{x_{q-1}} g(x_{1})^{D_{n}(v_{1})} g(x_{2})^{D_{n}(v_{1}, v_{2})} \cdots g(x_{q})^{D_{n}(v_{1}, \dots, v_{q})} e^{- \sum_{i=1}^{q}x_{i}}\;{\rm d} \cev{\vecx}_{q} \\
& = A_{1} + A_{2},
\end{align*}

\noindent where $\cev{\vecx}_{q} = (x_{q}, \dots, x_{1}) \in \dsR_{+}^{q}$, $x_{0} = a_{n}^{\alpha}$ and $\alpha = \frac{1}{2}  \left(\frac{1}{k} + \frac{1}{k+1} \right)$. On the one hand, \Cref{lemma2} implies that
\begin{eqnarray*}
A_{2} \leq \int_{x_{0}}^{\infty} g(x)^{D_{n}(v_{1})} e^{-x}\;{\rm d} x \leq g(x_{0})^{D_{n}(v_{1})} =
O\left( \exp\left({-\frac{ x_{0}^{k}}{2a_{n}k!}}  \right) \right);
\end{eqnarray*}

\noindent we have used our assumption \(\max_{v \in \dsT_{n}} d_{n}(v) = O(a_{n}^{-1})\). On the other hand, \Cref{lemma2} also implies that 
\begin{eqnarray*}
A_{1} & = & (1 + O ( a_{n}^{\frac{1}{2k}} ) )^{q} \int_{0}^{x_{0}} \int_{0}^{x_{1}} \cdots \int_{0}^{x_{q-1}} G_{n}(\vecv_{q}, \vecx_{q})  e^{- \sum_{i=1}^{q}x_{i}}\;{\rm d} \cev{\vecx}_{q}\\
& = & (1 + O ( a_{n}^{\frac{q}{2k}} ) ) \int_{0}^{\infty} \int_{0}^{x_{1}} \cdots \int_{0}^{x_{q-1}} G_{n}(\vecv_{q}, \vecx_{q})  e^{- \sum_{i=1}^{q}x_{i}}\;{\rm d} \cev{\vecx}_{q} + A_{3},
\end{eqnarray*}

\noindent where $\vecv_{q} = (v_{1}, \dots, v_{q}) \in \dsT_{n}^{q}$ and
\begin{eqnarray*}
A_{3} = (1 + O ( a_{n}^{\frac{q}{2k}} ) ) \int_{x_{0}}^{\infty} \int_{0}^{x_{1}} \cdots
\int_{0}^{x_{q-1}} G_{n}(\vecv_{q}, \vecx_{q})  e^{- \sum_{i=1}^{q}x_{i}}\;{\rm d}
\cev{\vecx}_{q}= O\left( \exp\left({-\frac{ x_{0}^{k}}{2 a_{n} k!}}\right) \right);
\end{eqnarray*}

\noindent this estimation can be deduced similarly as the one for the integral $A_{2}$.  Therefore, the previous estimations and Remark \ref{remark1} allow us to conclude that 
\begin{align} \label{eq15}
\dsE[X_{q}] &  =  (1 + O( a_{n}^{\frac{q}{2k}} ) )  q! \sum_{v_{1}, \dots, v_{q} \in \dsT_{n} \setminus \{ \circ \} }  \int_{0}^{\infty} \int_{0}^{x_{1}} \cdots \int_{0}^{x_{q-1}} G_{n}(\vecv_{q}, { \bf x}_{q})  e^{- \sum_{i=1}^{q}x_{i}}\;{\rm d} \cev{\vecx}_{q}   \nonumber \\
& \hspace*{80mm }+ \dsE[Y_{q}] + o(n^{q}a_{n}^{q/k})\\ 
& =  (1 + O( a_{n}^{\frac{q}{2k}} ) ) q! 2^{-q}  \int_{0}^{2(n-1)} \cdots \int_{0}^{2(n-1)}  \int_{0}^{\infty} \int_{0}^{x_{1}} \cdots \int_{0}^{x_{q-1}} G_{\lceil V_{n} \rceil }(\vect_{q}, \vecx_{q}) e^{- \sum_{i=1}^{q}x_{i}}\;{\rm d} \cev{\vecx}_{q}\;{\rm d} \cev{\vect}_{q}  \nonumber \\
& \hspace*{80mm }+ \dsE[Y_{q}] + o(n^{q}a_{n}^{q/k}) \nonumber \\ 
&  =  (1 + O( a_{n}^{\frac{q}{2k}} ) ) q! n^{q} \int_{0}^{1} \cdots \int_{0}^{1}  \int_{0}^{\infty} \int_{0}^{x_{1}} \cdots \int_{0}^{x_{q-1}} G_{\widehat{V}_{n}}(\vect_{q}, \vecx_{q})  e^{- \sum_{i=1}^{q}x_{i}}\;{\rm d} \cev{\vecx}_{q}\;{\rm d} \cev{\vect}_{q}+  \nonumber \\
& \hspace*{80mm }+ \dsE[Y_{q}] + o(n^{q}a_{n}^{q/k}); \nonumber
\end{align}

\noindent note that if we had not excluded the root, we would not be able to write the sum as an integral.
By making the change of variables $x_{i} = a_{n}^{1/k} w_{i}$, for $1 \leq i \leq q$, we have that 
\begin{eqnarray*} 
\dsE[X_{q}] = (1 + O( a_{n}^{\frac{q}{2k}} ) ) q! n^{q} a_{n}^{q/k} \int_{0}^{1} \cdots \int_{0}^{1} \bar{H}_{n,q}(\vect_{q})  {\rm d} \cev{\vect}_{q}+ \dsE[Y_{q}] + + o(n^{q}a_{n}^{q/k}). \nonumber
\end{eqnarray*}

\noindent Finally, our claim follows by induction on $q \in \dsN$ and the assumption $\lim_{n \rightarrow \infty} na_{n}^{1/k} = \infty$.
\end{proof}

%\begin{remark} \label{remark2}
%The assumption \Cref{lemma3} (a) made in \Cref{lemma2} and \Cref{lemma1} may be
%relaxed. For example, it is enough (by the same proof) to assume that $\max_{v \in \dsT_{n}}
%\addC{d(v)}=
%O(a_{n}^{-1})$.
%\end{remark} 

We are now able to establish \Cref{lemma3}.

\begin{proof}[Proof of \Cref{lemma3}]
    First note that by condition (a) of \Cref{lemma3} and \eqref{eq10}, we have \(\max_{v \in
\dsT_{n}} d_{n}(v) = \sup_{t \in [0,1]} \widetilde{V}_{n}(t) = O(a_{n}^{-1})\). Thus
the conditions for \Cref{lemma2} and \Cref{lemma1} are satisfied.

Recall the functions $\bar{H}_{n,q}$ and $H_{f,q}$ defined in (\ref{eq7}) and (\ref{eq3}), respectively. Therefore, notice that we only need to show that 
\begin{eqnarray} \label{eq8}
    \int_{[0,1]^{q}} \bar{H}_{n,q}(\vect_{q})\;{\rm d} \cev{\vect}_{q} \rightarrow  
    \int_{[0,1]^{q}} H_{f,q}(\vect_{q})\;{\rm d} \cev{\vect}_{q}, \hspace*{4mm} \text{as} \hspace*{2mm} n \rightarrow \infty.
\end{eqnarray}
 
\noindent The above convergence together with Lemma \ref{lemma1} implies that $\dsE[\scK_{1}(\dsT_{n})^{q}] = O(n^{q}a_{n}^{q/k})$ which clearly proves the first claim in Lemma \ref{lemma3}. The second claim follows immediately from \Cref{Theo2} and the method of moments. 

We henceforth prove the claim in \eqref{eq8}. Recall that a sequence $(g_{n})_{n \geq 1}$ of
non-negative functions on a measure space $(\Omega, \scF, \mu)$ with total mass $1$, i.e.,
$\mu(\Omega) =1$, is uniformly integrable if $\int_{\Omega} g_{n}\;{\rm d} \mu < \infty$ for all
\(n \ge 1\) and 
\begin{eqnarray*}
\sup_{A \in \scF: \mu(A) \leq \delta} \sup_{n \geq 1} \int_{A} g_{n}\;{\rm d} \mu \rightarrow 0, \hspace*{4mm} \text{as} \hspace*{2mm} \delta \rightarrow 0. 
\end{eqnarray*}

\noindent We also recall the following useful result on uniformly integrable sequences of functions. Suppose further that $g_{n} \rightarrow g$ almost everywhere as $n \rightarrow \infty$.  By \cite[Proposition 4.12]{Ka2002}, we know that
\begin{eqnarray} \label{eq9}
(g_{n})_{n \geq 1} \hspace*{3mm} \text{is uniformly integrable if and only if} \hspace*{3mm} \int g_{n}\;{\rm d} \mu \rightarrow \int g\;{\rm d} \mu < \infty, \hspace*{2mm} \text{as} \hspace*{2mm} n \rightarrow \infty.
\end{eqnarray} 

\noindent Then in order to prove (\ref{eq8}), it is enough to check the following:
\begin{itemize}
\item[(i)] The sequence $(\bar{H}_{n,q})_{n \geq 1}$ is uniformly integrable on $[0,1]^{q}$, and
\item[(ii)] $\bar{H}_{n,q} \rightarrow H_{f,q}$ as $n \rightarrow \infty$. 
\end{itemize}
 
\noindent We start by showing (i). Note that $|a_{n}\widetilde{V}_{n}(t) - a_{n}\widehat{V}_{n}(t)| \leq a_{n}$ for $t \in [0,1]$. Then, the assumption (a) implies that $a_{n}\widehat{V}_{n}(t) \rightarrow f(t)$ and $1/(a_{n}\widehat{V}_{n}(t))^{1/k} \rightarrow (1/f(t))^{1/k}$, for every $t \in [0,1]$, as $n \rightarrow \infty$. Moreover, the assumption (b) shows that $(1/(a_{n}\widehat{V}_{n}(t))^{1/k})_{n \geq 1}$ is uniformly integrable on $[0,1]$. More generally, for every fixed $q \in \dsN$ and $\vect_{q} = (t_{1}, \dots, t_{q})$, define the function $ \widetilde{H}_{n,q}(\vect_{q}) \coloneqq  (a_{n}\widehat{V}_{n}(t_{1}) \cdots a_{n}\widehat{V}_{n}(t_{q}) )^{-1/k}$. We then observe that
\begin{eqnarray*}
\int_{0}^{1} \cdots \int_{0}^{1}  \widetilde{H}_{n,q}(\vect_{q})\;{\rm d} \vect_{q}  =  \left(\int_{0}^{1} \left( a_{n}\widehat{V}_{n}(t) \right)^{ -1/k}\;{\rm d} t \right)^{q} 
& \rightarrow & \left(\int_{0}^{1} f(t)^{-1/k}\;{\rm d} t \right)^{q} \\
& = & \int_{0}^{1} \cdots \int_{0}^{1}\left( f(t_{1}) \cdots f(t_{q}) \right)^{-1/q}\;{\rm d} \vect_{q},
\end{eqnarray*}

\noindent as $n \rightarrow \infty$. Thus the result in (\ref{eq9}) shows that the sequence
$(\widetilde{H}_{n,q})_{n \geq 1}$ is uniformly integrable on $[0,1]^{q}$. Next notice that the
inequality $\exp(- a_{n}^{1/k}(x_{1} + \cdots + x_{q} )) \leq 1$ implies that
$\bar{H}_{n,q}(\vect_{q}) \leq H_{a_{n}\widehat{V}_{n}, q}(\vect_{q})$, where
$H_{a_{n}\widehat{V}_{n}, q}$ is defined in (\ref{eq3}). Then the inequality (\ref{eq5}) implies that
there exists a constant $C_{k,q} >0$ such that $\bar{H}_{n,q}(\vect_{q}) \leq C_{k,q}
\widetilde{H}_{n,q}(\vect_{q})$. Hence (i) follows by applying \cite[Theorem 4.5]{Gut2013}. 

Finally, we verify (ii). Recall that condition (a) implies that $a_{n}\widehat{V}_{n}(t) \rightarrow f(t)$, for every $t \in [0,1]$, as $n \rightarrow \infty$. Hence, whenever $0 \leq t_{1} \leq t_{2} \leq 1$, $\inf_{t \in [t_{1}, t_{2}]} a_{n}\widehat{V}_{n}(t) \rightarrow \inf_{t \in [t_{1}, t_{2}]} f(t)$ as $n \rightarrow \infty$. Thus, for $q \in \dsN$, the equation (\ref{eq6}), implies that $D_{a_{n}\widehat{V}_{n}}(t_{1}, \dots, t_{q}) \rightarrow D_{f}(t_{1}, \dots, t_{q})$ uniformly for $t_{1}, \dots, t_{q} \in [0,1]$ as $n \rightarrow \infty$. Then, for $\vecx_{q} \in \dsR_{+}^{q}$ and $\vect_{q} \in [0,1]^{q}$,
\begin{eqnarray*}
G_{a_{n}\widehat{V}_{n}}(\vect_{q}, \vecx_{q})  e^{- a_{n}^{1/k}\sum_{i=1}^{q}x_{i}} \rightarrow G_{f}(\vect_{q}, \vecx_{q}), \hspace*{3mm} \text{as} \hspace*{2mm} n \rightarrow \infty.
\end{eqnarray*}

\noindent Note that for $\varepsilon \in (0,1)$ there exists $N \in \dsN$ such that 
\begin{eqnarray*}
G_{a_{n}\widehat{V}_{n}}(\vect_{q}, \vecx_{q})  e^{- a_{n}^{1/k}\sum_{i=1}^{q}x_{i}} \leq
\exp\left({-\frac{(1-\varepsilon)f(t_{1})x_{1}^{1/k}}{k!}}  \right), \hspace*{3mm} \text{for} \hspace*{2mm} n \geq N. 
\end{eqnarray*}
 
\noindent Moreover, note that condition (b) implies that the function on the right-hand side of the inequality is integrable on $\{ \vecx_{q} \in \dsR_{+}: 0 \leq x_{q} \leq \cdots \leq x_{1} < \infty \}$. Therefore, it should be clear that (ii) follows by the dominated convergence theorem. This finishes the proof.  
\end{proof}

We can apply similar ideas as in the proofs of \Cref{lemma3} and \Cref{lemma1} to estimate
the mean of the number of $r$-records $\scK_{r}(\dsT_{n})$. It is important to mention  that we have
not tried to estimate higher moments of $\scK_{r}(\dsT_{n})$ to obtain a limit theorem in
distribution for this quantity. We believe that our methods can be used but the computations will be
more involved and we decided not to do it. Furthermore, the next results show that
$\scK_{r}(\dsT_{n})$ is of smaller order than $\scK_{1}(\dsT_{n})$ and hence it will not contribute
(in the limit) to the distribution of the $k$-cut number $\scK(\dsT_{n})$. 

\begin{lemma} \label{lemma7}
Let $\dsT_{n}$ be an ordered (deterministic) rooted tree with $n \in \dsN$ vertices. Suppose that
there exists a sequence $(a_{n})_{n \geq 1}$ of non-negative real numbers with $\lim_{n \rightarrow
\infty} a_{n} = 0$, $\lim_{n \rightarrow \infty} na_{n}^{1/k} = \infty$ and \(\max_{v \in \dsT_{n}}
d_{n}(v) = O(a_{n}^{-1})\). Then, for $r \in \{1, \dots, k\}$,  
\begin{eqnarray*}
n^{-1} a_{n}^{-r/k}  \dsE[\scK_{r}(\dsT_{n})] = (1+O(a_{n}^{\frac{1}{2k}}))  \int_{0}^{1} \int_{0}^{\infty} \frac{x^{r-1} e^{-a_{n}^{1/k}x}}{\Gamma(r)} e^{-\frac{a_{n} \widehat{V}_{n}(t)  x^{k}}{k!}}\;{\rm d} x {\rm d} t + o(1).
\end{eqnarray*}
\end{lemma}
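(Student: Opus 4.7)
The plan is to mimic the \(q=1\) case of the argument in \Cref{lemma1}, replacing the Exp\((1)\) density of \(G_{1,v}\) by the Gamma\((r)\) density \(\tfrac{x^{r-1}e^{-x}}{\Gamma(r)}\) of \(G_{r,v}\). Conditioning on \(G_{r,v}=x\) and using the independence of the clocks across distinct vertices, the event \(\{I_{r,v}=1\}\) is equivalent to \(\{G_{k,u}>x\) for each of the \(d_n(v)\) strict ancestors \(u\) of \(v\}\), so
\[
\dsE[\scK_r(\dsT_n)] \;=\; \sum_{v\in\dsT_n} \int_0^\infty \tfrac{x^{r-1}e^{-x}}{\Gamma(r)}\, g(x)^{d_n(v)}\,\mathrm{d}x,
\]
where \(g(x)\coloneqq\dsP({\rm Gamma}(k)>x) = \Gamma(k,x)/\Gamma(k)\). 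The root contributes exactly \(1\), which will be absorbed into the \(o(1)\) error after dividing by \(n a_n^{r/k}\).

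I would then split each \(x\)-integral at \(x_0=a_n^\alpha\) with \(\alpha=\tfrac{1}{2}(\tfrac{1}{k}+\tfrac{1}{k+1})\). On \([0,x_0]\), \Cref{lemma2} applied with \(q=1\) (so that \(D_n(v)=d_n(v)\)) gives uniformly
\[
g(x)^{d_n(v)} \;=\; (1+O(a_n^{1/(2k)}))\,\exp\bigl(-d_n(v)x^k/k!\bigr),
\]
while on \([x_0,\infty)\) the contribution is dominated (since \(g\) is decreasing and the Gamma density integrates to at most \(1\)) by \(g(x_0)^{d_n(v)}\), which, by \Cref{lemma2} combined with \(\max_v d_n(v)=O(a_n^{-1})\), is \(O\bigl(\exp(-x_0^k/(2a_n k!))\bigr)\). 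Summed over vertices, this is negligible compared with \(n a_n^{r/k}\) — exactly as for the term \(A_2\) in the proof of \Cref{lemma1}. Extending the \([0,x_0]\) approximation back to all of \([0,\infty)\) introduces an error of the same negligible order.

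Finally, I would convert the sum over non-root vertices into an integral along the depth-first walk: since each non-root vertex is visited on a subinterval of total length \(2\) in \([0,2(n-1)]\), for any nonnegative function \(F\),
\[
\sum_{v\in\dsT_n\setminus\{\circ\}} F(d_n(v)) \;=\; \tfrac{1}{2}\int_0^{2(n-1)} F(\lceil V_n(t)\rceil)\,\mathrm{d}t \;=\; (n-1)\int_0^1 F(\widehat{V}_n(s))\,\mathrm{d}s.
\]
Applying this with \(F(d)=\int_0^\infty \tfrac{x^{r-1}e^{-x}}{\Gamma(r)}\exp(-dx^k/k!)\,\mathrm{d}x\), then substituting \(x=a_n^{1/k}w\) (which replaces \(e^{-x}\) by \(e^{-a_n^{1/k}w}\) and \(\exp(-dx^k/k!)\) by \(\exp(-a_n d w^k/k!)\), while producing the prefactor \(a_n^{r/k}\)), and finally dividing by \(n a_n^{r/k}\) yields the claimed identity; the harmless factor \((n-1)/n=1+O(1/n)\) is absorbed into \(1+O(a_n^{1/(2k)})\) since \(n a_n^{1/k}\to\infty\) implies \(1/n=o(a_n^{1/(2k)})\). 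The main technical point is precisely the tail estimate on \([x_0,\infty)\), which relies on \Cref{lemma2} together with the depth bound \(\max_v d_n(v)=O(a_n^{-1})\) and is handled exactly as in the proof of \Cref{lemma1}.
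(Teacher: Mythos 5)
Your proposal is correct and follows essentially the same route as the paper's own proof: the same split of the $x$-integral at $x_{0}=a_{n}^{\alpha}$, the same application of \Cref{lemma2} with $q=1$ on $[0,x_{0}]$ together with the tail bound $O(\exp(-x_{0}^{k}/(2a_{n}k!)))$ on $[x_{0},\infty)$, the same conversion of the sum over non-root vertices into an integral against $\widehat{V}_{n}$, and the same substitution $x=a_{n}^{1/k}w$. The only (harmless) extra care you take is in tracking the $(n-1)/n$ factor explicitly, which the paper absorbs silently into the error terms.
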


\begin{proof}
Note that the case $r =1$ has been proven in \Cref{lemma1}. We follow a similar strategy to prove the case $r \in \{2, \dots, k\}$. Recall that $I_{r,v}$ is the indicator of the event that the vertex $v \in \dsT_{n}$ is an $r$-record defined in (\ref{eq2}). We observe that
\begin{eqnarray*}
\dsE[I_{r,v}] & = & \int_{0}^{\infty} \frac{x^{r-1}e^{-x}}{\Gamma(r)} \dsP({\rm Gamma(k)} > x)^{d_{n}(v)}\;{\rm d}x \\
& = & \int_{0}^{x_{0}} \frac{x^{r-1}e^{-x}}{\Gamma(r)} \dsP({\rm Gamma(k)} > x)^{d_{n}(v)}\;{\rm d}x + \int_{x_{0}}^{\infty} \frac{x^{r-1}e^{-x}}{\Gamma(r)} \dsP({\rm Gamma(k)} > x)^{d_{n}(v)}\;{\rm d}x \\
& = & A_{1} + A_{2},
\end{eqnarray*}

\noindent where $x_{0}^{\alpha} = a_{n}^{\alpha}$ and $\alpha = \frac{1}{2} \left(\frac{1}{k} + \frac{1}{k+1} \right)$. On the one hand, \Cref{lemma2}, with $q = 1$, implies that 
\begin{eqnarray*}
A_{2} \leq O\left( e^{- \frac{x_{0}^{k}}{2a_{n}k!}} \right) \int_{0}^{\infty} x^{r-1}e^{-x}\;{\rm d}x = O\left( e^{- \frac{x_{0}^{k}}{2a_{n}k!}} \right).
\end{eqnarray*}

\noindent On the other hand, \Cref{lemma2}, with $q = 1$, also implies that 
\begin{eqnarray*}
A_{1} = (1+O(a_{n}^{\frac{1}{2k}})) \int_{0}^{x_{0}} \frac{x^{r-1}e^{-x}}{\Gamma(r)} e^{-\frac{d_{n}(v)x^{k}}{k!}}\;{\rm d} x = (1+O(a_{n}^{\frac{1}{2k}})) \int_{0}^{\infty} \frac{x^{r-1}e^{-x}}{\Gamma(r)} e^{-\frac{d_{n}(v)x^{k}}{k!}}\;{\rm d} x + A_{3},
\end{eqnarray*} 

\noindent where 
\begin{eqnarray*}
A_{3} = \int_{x_{0}}^{\infty} \frac{x^{r-1}e^{-x}}{\Gamma(r)} e^{-\frac{d_{n}(v)x^{k}}{k!}}\;{\rm d} x = O\left( e^{- \frac{x_{0}^{k}}{2a_{n}k!}} \right);
\end{eqnarray*}

\noindent this estimate can be deduced similarly as the one for the integral $A_{2}$. By recalling that $\scK_{r}(\dsT_{n}) = \sum_{v \in \dsT_{n}} I_{r, v}$, we conclude from the previous estimations that
\begin{eqnarray} \label{eq38}
\dsE[\scK_{r}(\dsT_{n}) ] & = & (1+O(a_{n}^{\frac{1}{2k}}))  \sum_{v \in \dsT_{n} \setminus \{\circ \}} \int_{0}^{\infty} \frac{x^{r-1}e^{-x}}{\Gamma(r)} e^{-\frac{d_{n}(v)x^{k}}{k!}}\;{\rm d} x  + o(n a_{n}^{r/k})\\
& = & (1+O(a_{n}^{\frac{1}{2k}})) 2^{-1} \int_{0}^{2(n-1)} \int_{0}^{\infty} \frac{x^{r-1}e^{-x}}{\Gamma(r)} e^{-\frac{\lceil V_{n}(t) \rceil x^{k}}{k!}}\;{\rm d} x {\rm d} t + o(n a_{n}^{r/k}) \nonumber \\
& = & (1+O(a_{n}^{\frac{1}{2k}})) n \int_{0}^{1} \int_{0}^{\infty} \frac{x^{r-1}e^{-x}}{\Gamma(r)} e^{-\frac{ \widehat{V}_{n}(t)  x^{k}}{k!}}\;{\rm d} x {\rm d} t + o(n a_{n}^{r/k}). \nonumber 
\end{eqnarray}

\noindent Finally, our claim follows by making the change of variables $x = a_{n}^{1/k}w$.  
\end{proof}

\begin{lemma} \label{lemma9}
Suppose that $(\dsT_{n})_{n \geq 1}$ is a sequence of ordered (deterministic) rooted trees. Suppose
that there exists a sequence $(a_{n})_{n \geq 1}$ of non-negative real numbers with $\lim_{n
\rightarrow \infty} a_{n} = 0$, $\lim_{n \rightarrow \infty} n a_{n}^{1/k} = \infty$,
and a function $f \in
C([0,1], \dsR_{+})$ such that $\widetilde{V}_{n}$ satisfies the condition (a) in \Cref{lemma3} and
that for $r \in \{1, \dots, k\}$,
\begin{eqnarray*}
\int_{0}^{1} ( a_{n}\widehat{V}_{n}(t) )^{-r/k}\;{\rm d}t  \rightarrow \int_{0}^{1} f(t)^{-r/k}\;{\rm d}t < \infty, \hspace*{3mm} \text{as} \hspace*{2mm} n \rightarrow \infty. 
\end{eqnarray*}

\noindent Then, 
\begin{eqnarray*}
   n^{-1} a_{n}^{-r/k}   \dsE[\scK_{r}(\dsT_{n})] \rightarrow \frac{(k!)^{r/k} \Gamma(r/k)}{k
    \Gamma(r)} \int_{0}^{1} f(t)^{-r/k}\;{\rm d}t, \hspace*{3mm} \text{as} \hspace*{2mm} n
\rightarrow \infty.
\end{eqnarray*}
\end{lemma}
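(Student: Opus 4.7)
The plan is to start from the explicit formula in \Cref{lemma7} and pass the limit inside the outer integral via a uniform integrability argument analogous to the one used in the proof of \Cref{lemma3}. Note first that condition (a) of \Cref{lemma3} together with \eqref{eq10} gives $\max_{v\in\dsT_n} d_n(v) = O(a_n^{-1})$, so the hypotheses of \Cref{lemma7} are met. Writing
\[
h_n(t) \coloneqq \int_0^\infty \frac{x^{r-1} e^{-a_n^{1/k} x}}{\Gamma(r)} \exp\!\left(-\frac{a_n\widehat{V}_n(t)x^k}{k!}\right)\mathrm{d}x,
\]
\Cref{lemma7} reduces the claim to showing that $\int_0^1 h_n(t)\,\mathrm{d}t \to \frac{(k!)^{r/k}\Gamma(r/k)}{k\Gamma(r)}\int_0^1 f(t)^{-r/k}\,\mathrm{d}t$.

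Next, I would identify the pointwise limit. The hypothesis $\int_0^1 f(t)^{-r/k}\,\mathrm{d}t<\infty$ forces $f(t)>0$ for almost every $t\in[0,1]$. On this set, condition (a) yields $a_n\widehat{V}_n(t)\to f(t)$, and since $a_n^{1/k}\to 0$, dominated convergence applied to the inner integral (using e.g.\ the bound $\exp(-\tfrac{1}{2}f(t)x^k/k!)$ for $n$ large) gives
\[
h_n(t)\longrightarrow \int_0^\infty \frac{x^{r-1}}{\Gamma(r)}\exp\!\left(-\frac{f(t)x^k}{k!}\right)\mathrm{d}x = \frac{(k!)^{r/k}\Gamma(r/k)}{k\Gamma(r)}\,f(t)^{-r/k},
\]
where the final equality follows from the substitution $u = f(t)x^k/k!$.

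The key step is then to establish uniform integrability of $(h_n)_{n\geq 1}$ on $[0,1]$. Dropping the factor $e^{-a_n^{1/k}x}\le 1$ and performing the same substitution as above yields the uniform bound
\[
h_n(t)\leq \frac{(k!)^{r/k}\Gamma(r/k)}{k\Gamma(r)}\bigl(a_n\widehat{V}_n(t)\bigr)^{-r/k}.
\]
By the pointwise convergence $(a_n\widehat{V}_n(t))^{-r/k}\to f(t)^{-r/k}$ and the assumed convergence of the integrals, \eqref{eq9} (applied in the opposite direction) shows that $((a_n\widehat{V}_n)^{-r/k})_{n\geq 1}$ is uniformly integrable on $[0,1]$; hence so is $(h_n)_{n\geq 1}$.

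Combining the pointwise convergence of $h_n$ with its uniform integrability, \eqref{eq9} gives $\int_0^1 h_n(t)\,\mathrm{d}t \to \frac{(k!)^{r/k}\Gamma(r/k)}{k\Gamma(r)}\int_0^1 f(t)^{-r/k}\,\mathrm{d}t$. Plugging this into the formula of \Cref{lemma7}, the prefactor $(1+O(a_n^{1/(2k)}))$ tends to $1$ and the additive $o(1)$ term vanishes, delivering the claimed limit. The only genuinely delicate point is the uniform integrability; however, as in \Cref{lemma3}, this is forced by hypothesis together with \eqref{eq9}, so no new difficulty arises.
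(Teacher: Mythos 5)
Your proposal is correct and follows exactly the route the paper intends: it invokes \Cref{lemma7}, identifies the pointwise limit of the inner integral via the substitution $u=f(t)x^{k}/k!$ (the Gamma-integral identity the paper highlights), and closes the argument with the same uniform-integrability mechanism from the proof of \Cref{lemma3}, using the domination by $(a_{n}\widehat{V}_{n}(t))^{-r/k}$ together with \eqref{eq9}. The paper merely states this as ``a simple adaptation of the argument for $q=1$''; you have supplied the details it leaves to the reader, and they check out.
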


\begin{proof}
Notice that the case $r =1$ has been proved in \Cref{lemma3}. The proof of the general case $r \in \{1, \dots, k\}$ follows by a simple adaptation of the argument used in the proof of \Cref{lemma3} for $q=1$ with the use of \Cref{lemma7}. One only needs to note that
\begin{equation*}
\int_{0}^{1} \int_{0}^{\infty} \frac{x^{r-1}}{\Gamma(r)} e^{-\frac{f(t)  x^{k}}{k!}}\;{\rm d} x {\rm d} t = \frac{(k!)^{\frac{r}{k}}}{k} \frac{\Gamma(\frac{r}{k})}{\Gamma(r)} \int_{0}^{1} f(t)^{-r/k}\;{\rm d}t. 
\qedhere
\end{equation*}
\end{proof}

%%%%%%%%%%%%%%%%%%%%%%%%%%%%%%%%%%%%%%%%%%%%%%%%%%%%%%%%%%%%%%%%
\section{Proof of \texorpdfstring{\Cref{theo1}}{Theorem 1}}\label{sec:thm1}
%%%%%%%%%%%%%%%%%%%%%%%%%%%%%%%%%%%%%%%%%%%%%%%%%%%%%%%%%%%%%%%%%

Let $\dsT_{n}$ be a Galton-Watson tree conditioned on its number of vertices being $n \in \dsN$ with offspring distribution $\xi$ satisfying (\ref{eq11}). Note that in this case both the $r$-records and the tree are random. Then we study $\scK_{r}(\dsT_{n})$ as random variable conditioned on $\dsT_{n}$. More precisely, we first choose a random tree $\dsT_{n}$. Then we keep it fixed and consider the number of $r$-records. This gives a random variable $\scK_{r}(\dsT_{n})$ with distribution that depends on $\dsT_{n}$. We have the following lemma that corresponds to \cite[Lemma 4.8]{Janson2006}. 

\begin{lemma} \label{lemma8}
Let $\dsT_{n}$ be a Galton-Watson tree conditioned on its number of vertices being $n \in \dsN$ with offspring distribution $\xi$ satisfying (\ref{eq11}). For $r \in \{1, \dots, k\}$. We have that $\dsE[\scK_{r}(\dsT_{n})] = O(n^{1-\frac{r}{2k}})$. 
\end{lemma}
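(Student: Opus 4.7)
The plan is to begin from the exact identity that, conditionally on the tree $\dsT_n$,
\begin{eqnarray*}
\dsE[\scK_r(\dsT_n) \mid \dsT_n] = 1 + \sum_{v \in \dsT_n \setminus \{\circ\}} \int_0^\infty \frac{x^{r-1} e^{-x}}{\Gamma(r)}\, g(x)^{d_n(v)}\, {\rm d}x,
\end{eqnarray*}
where $g(x) \coloneqq \dsP(\text{Gamma}(k) > x)$ and the $+1$ records the deterministic contribution of the root (which is always an $r$-record, since it has no strict ancestor). This identity is already implicit in the proof of \Cref{lemma7}, before \Cref{lemma2} is used to approximate $g(x)^{d_n(v)}$; it is exact and, crucially, does not require any bound on the height of $\dsT_n$.

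Next I would establish a universal bound on the inner integral: there is a constant $C = C(r,k)$ such that
\begin{eqnarray*}
\int_0^\infty \frac{x^{r-1} e^{-x}}{\Gamma(r)}\, g(x)^{d}\, {\rm d}x \leq C (d \vee 1)^{-r/k}
\end{eqnarray*}
for every $d \in \dsZ_{\ge 0}$. This follows from elementary tail estimates $g(x) \leq e^{-c_1 x^k}$ for $x$ in a neighbourhood of $0$ (coming from the form of the Gamma$(k)$ density near the origin) and $g(x) \leq e^{-x/2}$ for $x$ large, followed by the change of variables $y = d^{1/k} x$ in the small-$x$ piece. Combining with the identity above and taking expectations reduces the problem to showing
\begin{eqnarray*}
\dsE\left[\sum_{v \in \dsT_n} (d_n(v) \vee 1)^{-r/k}\right] = O\bigl(n^{1 - \frac{r}{2k}}\bigr),
\end{eqnarray*}
which, picking $U$ uniformly from $\dsT_n$, is equivalent to $\dsE[(d_n(U) \vee 1)^{-r/k}] = O(n^{-r/(2k)})$.

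For the latter, I would apply the layer-cake representation,
\begin{eqnarray*}
\dsE[(d_n(U) \vee 1)^{-r/k}] \leq \int_0^1 \dsP(d_n(U) \leq \lambda^{-k/r})\, {\rm d}\lambda,
\end{eqnarray*}
together with the uniform tail estimate $\dsP(d_n(U) \leq m) \leq C\min(1, m^2/n)$, which is known to hold for a conditioned Galton-Watson tree with finite offspring variance. Splitting the right-hand integral at $\lambda = n^{-r/(2k)}$ then yields the desired $O(n^{-r/(2k)})$ rate for every $r \in \{1, \dots, k\}$, including the critical case $r = k$, where the cruder uniform bound $\dsE[N_h] \leq C \sqrt{n}$ on the expected level sizes would cost an extra logarithmic factor.

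The main obstacle is the depth tail estimate $\dsP(d_n(U) \leq m) \leq C m^2 / n$. This is a classical but non-trivial statement about the depth profile of conditioned Galton-Watson trees with finite variance offspring; equivalently, the expected number of vertices at depth at most $m$ is $O(m^2)$ for $m \leq \sqrt{n}$. It can be derived from the generating-function analysis of the profile, from a random-walk estimate on the Lukasiewicz or contour encoding of $\dsT_n$, or as a consequence of the Aldous-Marckert-Mokkadem convergence of the rescaled tree to the Brownian CRT. It is essentially the same input used in \cite[Lemma 4.8]{Janson2006}; since the dependence on $k$ has been entirely absorbed into the universal integral bound of the first step, the remainder of the argument is a direct adaptation of Janson's $k=1$ proof.
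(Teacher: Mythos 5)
Your proposal is correct and follows the same overall strategy as the paper's proof: bound $\dsE[I_{r,v}]$ by a constant times $d_{n}(v)^{-r/k}$, then control $\dsE\bigl[\sum_{v} d_{n}(v)^{-r/k}\bigr]$ via the expected depth profile of the conditioned Galton--Watson tree. The two places where you deviate are both minor but worth noting. First, instead of invoking the asymptotic expansion of \Cref{lemma2} (which, as used via \Cref{lemma7}, formally presupposes the deterministic height condition $\max_{v} d_{n}(v) = O(a_{n}^{-1})$ that a random $\dsT_{n}$ need not satisfy), you start from the exact identity $\dsE[I_{r,v}\mid \dsT_{n}] = \int_{0}^{\infty} \frac{x^{r-1}e^{-x}}{\Gamma(r)} g(x)^{d_{n}(v)}\,{\rm d}x$ and prove the uniform deterministic bound $\int_{0}^{\infty}\frac{x^{r-1}e^{-x}}{\Gamma(r)}g(x)^{d}\,{\rm d}x \le C (d\vee 1)^{-r/k}$; this is cleaner and sidesteps that technical wrinkle entirely, at the cost of no longer producing the sharp constant (which this lemma does not need). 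Second, you organize the profile estimate as a layer-cake integral against the tail bound $\dsP(d_{n}(U)\le m)\le C\min(1,m^{2}/n)$, whereas the paper splits the sum $\sum_{i} i^{-r/k} w_{i}(\dsT_{n})$ at $i=\lfloor n^{1/2}\rfloor$ and applies $\dsE[w_{i}(\dsT_{n})]\le Ci$ from \cite[Theorem 1.13]{Janson2006}; these are the same input (your cumulative bound is the summed form of Janson's level estimate) and the two computations are equivalent by summation by parts. Your accounting of the root's deterministic contribution is also correct. In short: same skeleton, with a slightly more robust first step.
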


\begin{proof}
By an application of the proof of \Cref{lemma7} with  $a_{n} = n^{-1/2}$ (in particular, the equality (\ref{eq38})), we see that
 \begin{eqnarray}  \label{eq39}
\dsE[\scK_{r}(\dsT_{n})| \dsT_{n} ]  &\leq & (1+O(a_{n}^{\frac{1}{2k}}))  \sum_{v \in \dsT_{n} \setminus \{\circ \}} \int_{0}^{\infty} \frac{x^{r-1}}{\Gamma(r)} e^{-\frac{d_{n}(v)x^{k}}{k!}}\;{\rm d} x  + o(n a_{n}^{r/k}) \nonumber \\
& = &  (1+O(a_{n}^{\frac{1}{2k}}))  \sum_{v \in \dsT_{n} \setminus \{\circ \}} \frac{(k!)^{r/k} \Gamma(r/k)}{k \Gamma(r)}  d_{n}(v)^{-r/k} + o(n a_{n}^{r/k}) \nonumber \\
& = & (1+O(a_{n}^{\frac{1}{2k}})) \frac{(k!)^{r/k} \Gamma(r/k)}{k \Gamma(r)}  \sum_{i=1}^{\infty}   i^{-r/k} w_{i}(\dsT_{n}) + o(n a_{n}^{r/k}),
\end{eqnarray}

\noindent where $w_{i}(\dsT_{n})$ denotes the number of vertices at depth $i \in \dsN$ in $\dsT_{n}$. Notice that
\begin{eqnarray*}
\sum_{i=1}^{\infty}   i^{-r/k} w_{i}(\dsT_{n}) \leq n^{1-\frac{r}{2k}} + \sum_{i=1}^{\lfloor n^{1/2}
\rfloor}   i^{-r/k} w_{i}(\dsT_{n}),
\end{eqnarray*}
\noindent 
by the fact that \(\sum_{i \ge 0} w_{i}(\dsT_{n}) = n\).
Since $\dsE[\xi^{2}] < \infty$ by our assumption \eqref{eq11}, \cite[Theorem 1.13]{Janson2006} implies that for all $n, i \in \dsN$, $\dsE[w_{i}(\dsT_{n})] \leq Ci$ for some constant $C >0$ depending on $\xi$ only. Therefore,
\begin{eqnarray} \label{eq40}
\sum_{i=1}^{\infty}   i^{-r/k} \dsE[w_{i}(\dsT_{n})]
\leq   n^{1-\frac{r}{2k}} + \sum_{i=1}^{\lfloor n^{1/2} \rfloor} \dsE[w_{i}(\dsT_{n})] i^{-\frac{r}{k}}  
= O(n^{1-\frac{r}{2k}}).
\end{eqnarray}

\noindent By taking expectation in (\ref{eq39}), our claim follows by (\ref{eq40}).
\end{proof}

We continue by studying the moments of the number of $1$-records $\scK_{1}(\dsT_{n})$. We denote by $\mu_{n}$ the (random) probability distribution of $\sigma^{-1/k}n^{-1+1/2k}\scK_{1}(\dsT_{n})$ given $\dsT_{n}$. Define the random variables  
\begin{eqnarray*}
m_{q}(\dsT_{n}) \coloneqq \dsE[\scK_{1}(\dsT_{n})^{q} | \dsT_{n}], \hspace*{5mm} q \in \dsZ_{\ge 0}. 
\end{eqnarray*}

\noindent Notice that the moments of $\mu_{n}$ are given by 
$\sigma^{-q/k}n^{-q+q/2k} m_{q}(\dsT_{n})$. We have the following lemma that corresponds to \cite[Lemma 4.9]{Janson2006}.

\begin{lemma}  \label{lemma4}
Let $\dsT_{n}$ be a Galton-Watson tree conditioned on its number of vertices being $n \in \dsN$ with offspring distribution $\xi$ satisfying \eqref{eq11}. Furthermore, suppose that for every fixed $q \in \dsN$ we have that $\dsE[\xi^{q+1}] < \infty$. Then $\dsE[m_{q}(\dsT_{n})] = O(n^{q-\frac{q}{2k}})$. 
\end{lemma}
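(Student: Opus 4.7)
The plan is to transplant the strategy used in \Cref{lemma8} to the $q$-th moment and to combine it with the integral representation of \Cref{lemma1}. Setting $a_{n} = n^{-1/2}$, I would first apply the argument in the proof of \Cref{lemma1} conditionally on $\dsT_{n}$, restricted to a direct upper bound rather than the asymptotic equality. Combined with the estimate \eqref{eq5} applied to $f = a_{n}\widehat{V}_{n}$, this gives a pointwise bound of the form
\begin{eqnarray*}
m_{q}(\dsT_{n}) \le C_{k,q}\, n^{q} \left(\int_{0}^{1} (a_{n}\widehat{V}_{n}(t))^{-1/k}\;\mathrm{d}t\right)^{q} \cdot a_{n}^{q/k} + O(n^{q-1}) = C_{k,q}' \left(\sum_{v \in \dsT_{n} \setminus \{\circ\}} d_{n}(v)^{-1/k}\right)^{q} + O(n^{q-1}),
\end{eqnarray*}
where the second equality uses the depth-first-walk identity $\int_{0}^{2(n-1)} \lceil V_{n}(s)\rceil^{-1/k}\;\mathrm{d}s = 2 \sum_{v \ne \circ} d_{n}(v)^{-1/k}$ (each edge is traversed twice and always records the depth of its lower endpoint). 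A mild complication is that \Cref{lemma1} formally assumes $\max_{v}d_{n}(v) = O(a_{n}^{-1})$, which is not almost sure for Galton-Watson trees; this can be handled either by truncating on the event $\{\max_{v}d_{n}(v) \le M\sqrt{n}\}$ and using the classical exponential tail for heights of conditioned Galton-Watson trees, or by rerunning the split at $x_{0} = a_{n}^{\alpha}$ from \Cref{lemma2} with only crude tail bounds (e.g.\ Markov) used for $x \ge x_{0}$.

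Next, I would rewrite the sum in terms of the width sequence $w_{i}(\dsT_{n})$ and perform the same splitting trick used in \Cref{lemma8}:
\begin{eqnarray*}
\sum_{v \ne \circ} d_{n}(v)^{-1/k} = \sum_{i \ge 1} w_{i}(\dsT_{n})\, i^{-1/k} \le n \cdot n^{-1/(2k)} + \sum_{i \le \sqrt{n}} w_{i}(\dsT_{n})\, i^{-1/k},
\end{eqnarray*}
using $\sum_{i} w_{i}(\dsT_{n}) = n$ to bound the tail. Raising to the $q$-th power via $(a+b)^{q} \le 2^{q-1}(a^{q}+b^{q})$, the first contribution is deterministically $O(n^{q - q/(2k)})$, so the task reduces to controlling the expected $q$-th moment of the truncated sum.

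For the truncated term, expand
\begin{eqnarray*}
\dsE\!\left[\Big(\sum_{i \le \sqrt{n}} w_{i}(\dsT_{n})\, i^{-1/k}\Big)^{q}\right] = \sum_{i_{1}, \dots, i_{q} \le \sqrt{n}} (i_{1} \cdots i_{q})^{-1/k} \dsE\!\left[w_{i_{1}}(\dsT_{n}) \cdots w_{i_{q}}(\dsT_{n})\right],
\end{eqnarray*}
and bound the joint moment by Hölder's inequality: $\dsE[w_{i_{1}} \cdots w_{i_{q}}] \le \prod_{j} \dsE[w_{i_{j}}^{q}]^{1/q}$. The key input from the Galton-Watson literature is a higher-moment version of \cite[Theorem 1.13]{Janson2006}, stating that $\dsE[w_{i}(\dsT_{n})^{q}] \le C_{q}\, i^{q}$ uniformly in $n$ whenever $\dsE[\xi^{q+1}] < \infty$—this is exactly where the moment hypothesis of the lemma enters. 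Substituting yields
\begin{eqnarray*}
\dsE\!\left[\Big(\sum_{i \le \sqrt{n}} w_{i}(\dsT_{n})\, i^{-1/k}\Big)^{q}\right] \le C_{q}\Big(\sum_{i \le \sqrt{n}} i^{\,1 - 1/k}\Big)^{q} \le C_{q}' \, n^{(q/2)(2 - 1/k)} = C_{q}' \, n^{\,q - q/(2k)},
\end{eqnarray*}
which combined with the first step gives $\dsE[m_{q}(\dsT_{n})] = O(n^{q - q/(2k)})$.

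The main obstacle, as already flagged above, is Step~1: producing a clean deterministic upper bound on $m_{q}(\dsT_{n})$ valid for arbitrary (deterministic) trees without the max-depth assumption used in \Cref{lemma1}. Once this is in place, the rest is a routine combination of the sum/integral identity, the $\sqrt{n}$-truncation of \Cref{lemma8}, Hölder's inequality, and the cited width moment bound.
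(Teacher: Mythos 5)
Your proposal follows the paper's own proof almost step for step: the integral representation from \Cref{lemma1} (the identity \eqref{eq15}) plus the computation behind \eqref{eq5} reduces $m_{q}(\dsT_{n})$ to a constant times $\bar{m}_{1}(\dsT_{n})^{q}$ with $\bar{m}_{1}(\dsT_{n})=\sum_{v\neq\circ}d_{n}(v)^{-1/k}$, the sum is split over the width profile at depth $\lfloor n^{1/2}\rfloor$, and the bound $\dsE[w_{i}(\dsT_{n})^{q}]\le Ci^{q}$ under $\dsE[\xi^{q+1}]<\infty$ --- which is exactly \cite[Theorem 1.13]{Janson2006}, not merely a ``higher-moment version'' of it --- is combined with Minkowski's inequality (your H\"older expansion of the multiple sum is the same computation). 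Your worry about the hypothesis $\max_{v}d_{n}(v)=O(a_{n}^{-1})$ is legitimate, and the paper does apply \Cref{lemma1} to the random tree without comment; but only the upper-bound half of \Cref{lemma2} is needed, and the elementary estimate $\dsP(\mathrm{Gamma}(k)>x)\le\exp(-e^{-x}x^{k}/k!)$ yields $\dsP(\mathrm{Gamma}(k)>x)^{D}\le\exp(-(1-o(1))Dx^{k}/k!)$ uniformly in $D$ for $x\le x_{0}$, which is all that the derivation of \eqref{eq5} requires; so your truncation device is workable but not necessary.

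The one step that does not close as written is your treatment of the lower-order correction: you absorb $Y_{q}$ into an $O(n^{q-1})$ error and then conclude that the total is $O(n^{q-q/(2k)})$. Since $q-1>q-q/(2k)$ whenever $q>2k$ (for $k=2$, $q=5$ this pits $n^{4}$ against $n^{15/4}$), the crude bound $m_{l}(\dsT_{n})\le n^{l}$ for $l\le q-1$ is not good enough for large $q$. The paper closes this by induction on $q$: assuming $\dsE[m_{l}(\dsT_{n})]=O(n^{l-l/(2k)})$ for all $l<q$ gives $\dsE[Y_{q}]=O(n^{(q-1)(1-1/(2k))})=o(n^{q-q/(2k)})$. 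Your argument supplies exactly the inductive step needed, so the repair is immediate, but the induction must actually be invoked; as stated, the final estimate does not follow for $q>2k$.
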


\begin{proof}
By an application of \Cref{lemma1} with $q \in \dsN$ and $a_{n} = n^{-1/2}$ (in particular, the equality \eqref{eq15} in its proof), we see that
 \begin{eqnarray*} 
m_{q}(\dsT_{n}) \leq (1+O(n^{-\frac{q}{4k}})) q! \sum_{v_{1}, \dots, v_{q} \in \dsT_{n} \setminus \{ \circ \}}  \int_{0}^{\infty} \int_{0}^{x_{1}} \cdots \int_{0}^{x_{q-1}} G_{n}(\vecv_{q}, { \bf x}_{q})\;{\rm d} \cev{\vecx}_{q} + Y_{q},
\end{eqnarray*}

\noindent where $Y_{q} \coloneqq \sum_{p=0}^{q-1} \sum_{l=0}^{p}  \binom{q}{p} \binom{p}{l} (-1)^{p-l} m_{l}(\dsT_{n})$. After a similar computation as in the proof of the inequality (\ref{eq5}), one sees that there exists a constant $C_{k,q} > 0$ such that
 \begin{eqnarray} \label{eq16}
m_{q}(\dsT_{n}) \leq (1+O(n^{-\frac{q}{4k}})) q! C_{k,q} \bar{m}_{1}(\dsT_{n})^{q} + Y_{q},
\end{eqnarray}

\noindent where $\bar{m}_{1}(\dsT_{n})\coloneqq  \sum_{v \in \dsT_{n} \setminus \{ \circ \}} d_{n}(v)^{-1/k}$. Notice that
 \begin{eqnarray*} 
\bar{m}_{1}(\dsT_{n}) = \sum_{i=1}^{\infty} w_{i}(\dsT_{n}) i^{-\frac{1}{k}} \leq  n^{1-\frac{1}{2k}} + \sum_{i=1}^{\lfloor n^{1/2} \rfloor} w_{i}(\dsT_{n}) i^{-\frac{1}{k}},
\end{eqnarray*} 

\noindent where $w_{i}(\dsT_{n})$ denotes the number of vertices at depth $i \in \dsN$ in
$\dsT_{n}$. Since $\dsE[\xi^{q+1}] < \infty$ for $q \in \dsN$, \cite[Theorem
1.13]{Janson2006} implies that for all $n, i \in \dsN$, $\dsE[w_{i}(\dsT_{n})^{q}] \leq
Ci^{q}$ for some constant $C >0$ depending on $q$ and $\xi$ only. Therefore, Minkowski's inequality implies that
\begin{eqnarray} \label{eq17}
\dsE[\bar{m}_{1}(\dsT_{n})^{q}]^{\frac{1}{q}} 
\le n^{1-\frac{1}{2k}} + \sum_{i=1}^{\lfloor n^{1/2} \rfloor} \dsE[w_{i}(\dsT_{n})^{q}]^{\frac{1}{q}} i^{-\frac{1}{k}}  = O(n^{1-\frac{1}{2k}}).
\end{eqnarray}

\noindent By taking expectation in (\ref{eq16}), we deduce from (\ref{eq17}) that
\begin{eqnarray*}
\dsE[m_{q}(\dsT_{n}) ] = \dsE[Y_{q} ]  + O(n^{q-\frac{q}{2k}}),
\end{eqnarray*}

\noindent and our claim follows by induction on $q \in \dsN$. 
\end{proof}

Let $\widetilde{V}_{n}$ and $\widehat{V}_{n}$ be the normalized depth-first search walks associated with the conditioned Galton-Watson tree $\dsT_{n}$. Note that in this case $\widetilde{V}_{n}$ becomes a random function on $C([0,1], \dsR_{+})$. Recall that a remarkable result due to Aldous \cite[Theorem 23 with Remark 2]{Aldous1993} (see also \cite[Theorem 1]{Marck2003}) shows that
\begin{eqnarray} \label{eq12}
n^{-1/2} \widetilde{V}_{n} \inlaw 2 \sigma^{-1} B^{\rm ex}, \hspace*{5mm} \text{as} \hspace*{2mm} n
\rightarrow \infty,
\end{eqnarray}

\noindent in $C([0,1], \dsR_{+})$, with its usual topology, and where $B^{\rm ex} = (B^{\rm ex}(t), t \in [0,1])$ is a standard normalized Brownian excursion. Note that $B^{\rm ex}$ is a random element from $C([0,1], \dsR_{+})$; see for example \cite{Blu1992} or \cite{Revuz1999}. 

\begin{lemma} \label{lemma5}
For $r \in \{1, \dots, k\}$, we have that $\int_{0}^{1} B^{\rm ex}(t)^{-r/k}\;{\rm d}t < \infty$ almost surely. 
\end{lemma}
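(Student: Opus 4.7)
The plan is to bound $\mathbb{E}\bigl[\int_{0}^{1} B^{\rm ex}(t)^{-r/k}\,{\rm d}t\bigr]$ by Fubini and show it is finite; this immediately gives that $\int_{0}^{1} B^{\rm ex}(t)^{-r/k}\,{\rm d}t < \infty$ almost surely. The key ingredient is the explicit marginal density of the Brownian excursion at a fixed time $t \in (0,1)$, namely
\begin{eqnarray*}
p_{t}(x) = \sqrt{\tfrac{2}{\pi}}\, \frac{x^{2}\, e^{-x^{2}/(2t(1-t))}}{[t(1-t)]^{3/2}}, \qquad x > 0,
\end{eqnarray*}
which can be obtained from, e.g., Revuz--Yor or Biane--Yor (or via Imhof's relation between the Brownian excursion and the Bessel$(3)$ bridge).

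First, I would compute $\mathbb{E}[B^{\rm ex}(t)^{-r/k}]$ for fixed $t \in (0,1)$. Substituting $u = x/\sqrt{t(1-t)}$ in the integral against $p_{t}$, one finds
\begin{eqnarray*}
\mathbb{E}[B^{\rm ex}(t)^{-r/k}]
= \sqrt{\tfrac{2}{\pi}}\,[t(1-t)]^{-r/(2k)}\int_{0}^{\infty} u^{2 - r/k}\, e^{-u^{2}/2}\,{\rm d}u
= C_{k,r}\,[t(1-t)]^{-r/(2k)},
\end{eqnarray*}
where $C_{k,r}$ is a finite positive constant (the remaining $u$-integral converges at $0$ because $2 - r/k \geq 1 > -1$ for $r \in \{1,\dots,k\}$, and at infinity because of the Gaussian factor).

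Next, I would integrate this over $t \in [0,1]$. Since $r \leq k$, the exponent satisfies $r/(2k) \leq 1/2 < 1$, so
\begin{eqnarray*}
\int_{0}^{1} \mathbb{E}[B^{\rm ex}(t)^{-r/k}]\,{\rm d}t = C_{k,r} \int_{0}^{1} [t(1-t)]^{-r/(2k)}\,{\rm d}t = C_{k,r}\, \mathrm{B}\bigl(1-\tfrac{r}{2k},\, 1-\tfrac{r}{2k}\bigr) < \infty,
\end{eqnarray*}
where $\mathrm{B}$ denotes the Beta function. Finally, by Tonelli's theorem (the integrand is non-negative),
\begin{eqnarray*}
\mathbb{E}\left[\int_{0}^{1} B^{\rm ex}(t)^{-r/k}\,{\rm d}t\right]
= \int_{0}^{1} \mathbb{E}[B^{\rm ex}(t)^{-r/k}]\,{\rm d}t < \infty,
\end{eqnarray*}
which forces $\int_{0}^{1} B^{\rm ex}(t)^{-r/k}\,{\rm d}t < \infty$ almost surely, completing the proof.

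There is no real obstacle here: once the marginal density is available, everything reduces to an elementary Gaussian moment computation and a Beta-function integrability check. The only thing to watch is the boundary behavior at $t = 0$ and $t = 1$, which is handled precisely because the exponent $r/(2k) \leq 1/2$ stays strictly below $1$; this is where the hypothesis $r \leq k$ is used.
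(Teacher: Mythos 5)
Your proof is correct and follows essentially the same route as the paper's: the paper likewise reduces the claim to showing $\mathbb{E}\bigl[\int_{0}^{1} B^{\rm ex}(t)^{-r/k}\,{\rm d}t\bigr]<\infty$ via the explicit marginal density of $B^{\rm ex}(t)$ (citing Blumenthal rather than Revuz--Yor). You simply carry out in full the Gaussian-moment and Beta-function computation that the paper leaves to the reader.
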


\begin{proof}
One only needs to show that $\dsE[ \int_{0}^{1} B^{\rm ex}(t)^{-r/k}\;{\rm d}t]<\infty$. This
follows by computing $\dsE[B^{\rm ex}(t)^{-r/k}]$, for every $t \in [0,1]$, from the well-known density
function of $B^{\rm ex}(t)$; see \cite[Chapter II, Equation (1.4)]{Blu1992}. 
\end{proof}

Therefore, \Cref{Theo2} and \Cref{lemma5}  imply that there exists almost surely a
(unique) measure $\nu_{2B^{\rm ex} }$ with moments given by $m_{q}(2B^{\rm ex} )$. The next result provides a generalization of \cite[Theorem 1.10]{Janson2006} and it will be used in the proof of \Cref{theo1}.

\begin{theorem} \label{Theo3}
Let $\dsT_{n}$ be a Galton-Watson tree conditioned on its number of vertices being $n \in \dsN$ with offspring distribution $\xi$ satisfying \eqref{eq11}. Then
\begin{eqnarray} \label{eq13}
\mu_{n} \inlaw \nu_{2B^{\rm ex} }, \hspace*{5mm} \text{as} \hspace*{2mm} n \rightarrow \infty,
\end{eqnarray}

\noindent in the space of probability measures on $\dsR$. Moreover, we have that for every $q \in \dsN$,
\begin{eqnarray} \label{eq14}
\sigma^{-q/k}n^{-q+q/2k} m_{q}(\dsT_{n}) \inlaw  m_{q}(2B^{\rm ex}), \hspace*{5mm} \text{as} \hspace*{2mm} n \rightarrow \infty. 
\end{eqnarray}

\noindent The convergences in \eqref{eq12}, \eqref{eq13} and \eqref{eq14}, for all $q \in \dsN$, hold jointly. 
In particular, if $\dsE[\xi^{p}]<\infty$ for all $p \in \dsN$, then for all $q \in \dsN$ and $l \in \dsN$, 
\begin{eqnarray} \label{eq19}
\sigma^{-lq/k}n^{-lq/k+lq/2k} \dsE[m_{q}(\dsT_{n})^{l}] \rightarrow \dsE[m_{q}(2B^{\rm ex})^{l}], \hspace*{5mm} \text{as} \hspace*{2mm} n \rightarrow \infty.
\end{eqnarray}
\end{theorem}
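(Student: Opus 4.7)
The plan is to apply \Cref{lemma3} pathwise (conditionally on $\dsT_{n}$) with $a_{n} = n^{-1/2}$ and $f = 2\sigma^{-1} B^{\rm ex}$; since $k \geq 2$ these satisfy $a_{n} \to 0$ and $n a_{n}^{1/k} = n^{1-1/(2k)} \to \infty$. By Aldous's invariance principle \eqref{eq12} combined with the Skorokhod representation theorem, we may work on a probability space where $n^{-1/2} \widetilde{V}_{n} \to 2\sigma^{-1} B^{\rm ex}$ holds almost surely in $C([0,1], \dsR_{+})$; the bound $|\widehat{V}_{n} - \widetilde{V}_{n}| \leq 1$ transfers this to $a_{n} \widehat{V}_{n}$, so condition (a) of \Cref{lemma3} holds almost surely on this Skorokhod space.

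The main obstacle is verifying condition (b) of \Cref{lemma3} almost surely, i.e.\ that $\int_{0}^{1} (a_{n}\widehat{V}_{n}(t))^{-1/k}\,{\rm d}t \to \int_{0}^{1} f(t)^{-1/k}\,{\rm d}t$ a.s.; the limit is a.s.\ finite by \Cref{lemma5}, and pointwise convergence of the integrand holds a.e.\ on $(0,1)$ since $B^{\rm ex}>0$ there a.s., but dominated convergence is not immediately available near $t\in\{0,1\}$. My plan is to combine the a.e.\ convergence with the uniform bound $\dsE\bigl[\int_{0}^{1}(a_{n}\widehat V_{n}(t))^{-1/k}\,{\rm d}t\bigr] = O(1)$, which follows from \Cref{lemma8} by way of the representation \eqref{eq38} and the change of variables $x = a_{n}^{1/k}w$; the integral is then split over $[\varepsilon, 1-\varepsilon]$ (where $f$ is bounded below, the integrand is uniformly bounded, and DCT applies directly) and over $[0,\varepsilon]\cup[1-\varepsilon,1]$ (whose contribution is small uniformly in $n$ by the above expectation bound together with Markov and the integrability of $f^{-1/k}$), finally letting $\varepsilon \downarrow 0$.

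Once condition (b) is verified, \Cref{lemma3} applied pathwise yields $n^{-q+q/(2k)} m_{q}(\dsT_{n}) \to m_{q}(2\sigma^{-1}B^{\rm ex})$ almost surely for every $q \in \dsN$. The substitution $x_{i} \mapsto c^{-1/k} x_{i}$ in \eqref{eq1} gives the homogeneity $m_{q}(cg) = c^{-q/k} m_{q}(g)$ for $c>0$, so $m_{q}(2\sigma^{-1} B^{\rm ex}) = \sigma^{q/k} m_{q}(2 B^{\rm ex})$; multiplying by $\sigma^{-q/k}$ delivers \eqref{eq14} almost surely on the Skorokhod space, and hence in distribution jointly with \eqref{eq12} by construction. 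The convergence $\mu_{n} \inlaw \nu_{2B^{\rm ex}}$ in \eqref{eq13} then follows from the joint convergence of all moments of $\mu_{n}$ together with the fact that $\nu_{2B^{\rm ex}}$ is determined by its moments (\Cref{Theo2}). Finally, for \eqref{eq19} under the stronger hypothesis $\dsE[\xi^{p}] < \infty$ for all $p \in \dsN$: Jensen's inequality applied conditionally on $\dsT_{n}$ gives $m_{q}(\dsT_{n})^{l} \leq m_{ql}(\dsT_{n})$, and \Cref{lemma4} applied with $q(l+1)$ in place of $q$ bounds the $(l+1)$-st moment of $\sigma^{-q/k}n^{-q+q/(2k)}m_{q}(\dsT_{n})$ uniformly in $n$; this uniform integrability upgrades \eqref{eq14} to the convergence of $l$-th expectations claimed in \eqref{eq19}.
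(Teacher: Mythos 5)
Your overall architecture is the same as the paper's: Skorokhod coupling of the invariance principle \eqref{eq12}, pathwise application of \Cref{lemma3} with $a_{n}=n^{-1/2}$ and $f=2\sigma^{-1}B^{\rm ex}$ (your homogeneity observation $m_{q}(cg)=c^{-q/k}m_{q}(g)$ correctly accounts for the factor $\sigma^{-q/k}$), and Jensen plus \Cref{lemma4} to upgrade \eqref{eq14} to the moment convergence \eqref{eq19}. The paper, however, does not verify condition (b) of \Cref{lemma3} the way you propose: it invokes the adaptation of \cite[Lemma 4.7]{Janson2006}, which gives the \emph{joint} convergence \eqref{eq18} of the pair $\bigl(\widetilde{V}_{n},\int_{0}^{1}\widehat{V}_{n}(t)^{-1/k}\,{\rm d}t\bigr)$, and then couples that pair; condition (b) is thus an input, not something deduced from (a).

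This matters because your proposed derivation of condition (b) has a genuine gap. The uniform bound $\dsE\bigl[\int_{0}^{1}(a_{n}\widehat{V}_{n}(t))^{-1/k}\,{\rm d}t\bigr]=O(1)$, combined with Markov, only gives tightness of the full integral; it does not make the endpoint contribution $\int_{0}^{\varepsilon}+\int_{1-\varepsilon}^{1}$ small uniformly in $n$ as $\varepsilon\downarrow 0$, which is what your splitting requires. Indeed, uniform convergence $a_{n}\widehat{V}_{n}\to f$ together with an $O(1)$ bound on the integrals is compatible with a positive amount of mass escaping to the endpoints: take $g_{n}=f$ on $[1/n,1]$ and $g_{n}\equiv n^{-k}$ on $[0,1/n]$; then $g_{n}\to f$ uniformly, $\int_{0}^{1}g_{n}^{-1/k}$ stays bounded, yet $\int_{0}^{1/n}g_{n}^{-1/k}\,{\rm d}t=1$ for all $n$, so the integrals do not converge to $\int_{0}^{1}f^{-1/k}$. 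To rule this out for the actual trees you need a quantitative estimate on the small-depth vertices, namely the profile bound $\dsE[w_{i}(\dsT_{n})]\le Ci$ from \cite[Theorem 1.13]{Janson2006} (already used in \Cref{lemma8}), which yields
\begin{equation*}
\dsE\Bigl[\tfrac{1}{n}\textstyle\sum_{v\,:\,1\le d_{n}(v)\le \delta n^{1/2}}\bigl(n^{-1/2}d_{n}(v)\bigr)^{-1/k}\Bigr]
\le \tfrac{C}{n}\,n^{\frac{1}{2k}}\textstyle\sum_{i\le \delta n^{1/2}} i^{1-\frac1k}
= O\bigl(\delta^{2-\frac1k}\bigr),
\end{equation*}
uniformly in $n$; truncating by \emph{depth} rather than by the time parameter $t$, and combining this with the uniform convergence on the complementary set where $a_{n}\widehat{V}_{n}\ge\delta$, is the correct way to close the argument (and is essentially the content of Janson's Lemma 4.7 that the paper adapts). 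The rest of your proof (the joint convergence statement, the identification of $\nu_{2B^{\rm ex}}$ via \Cref{Theo2}, and the uniform integrability step for \eqref{eq19}) is sound.
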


\begin{proof}
A simple adaptation of the proof of \cite[Lemma 4.7]{Janson2006} easily shows that 
\begin{eqnarray} \label{eq18}
\left(\widetilde{V}_{n}, \int_{0}^{1} \widehat{V}_{n}(t)^{-1/k}\;{\rm d} t \right) \inlaw \left(2
\sigma^{-1} B^{\rm ex}, 2^{-1/k} \sigma^{1/k} \int_{0}^{1}  B^{\rm ex}(t)^{-1/k}\;{\rm d} t \right),
\end{eqnarray}

\noindent in $C([0,1], \mathbb{R}_{+}) \times \dsR$, as $n \rightarrow \infty$. By the Skorohod coupling theorem (see e.g.\ \cite[Theorem
4.30]{Ka2002}), we can assume that the trees $(\dsT_{n})_{n \geq 1}$ are defined on a common
probability space such that the convergence in \eqref{eq18} holds almost surely. Therefore, the
convergences \eqref{eq13} and \eqref{eq14} follow immediately from \Cref{lemma3}. It only remains to
prove \eqref{eq19}. Recall that we assume that $\dsE[\xi^{p}] < \infty$ for every $p \in \dsN$. By
Jensen's inequality, we notice that $m_{q}(\dsT_{n})^{l} \leq m_{lq}(\dsT_{n})$ for $l,q \in \dsN$.
Hence \Cref{lemma4} implies that $\dsE[m_{q}(\dsT_{n})^{l}] = O(n^{lq-\frac{lq}{2k}})$. This shows
that every moment of the right-hand side of \eqref{eq14} stays bounded as $n \rightarrow \infty$
which implies \eqref{eq19}. 
\end{proof}

We are now able to prove \Cref{theo1}. 

\begin{proof}[Proof of \Cref{theo1}]
\Cref{lemma8} establishes that $\mathbb{E} [\scK_{r}(\dsT_{n})] = O(n^{1-\frac{r}{2k}})$ for $r \in \{1, \dots, k \}$. As a consequence, Markov's inequality implies that $n^{-1+ \frac{1}{2k}} \scK_{r}(\dsT_{n}) \rightarrow 0$ in probability, as $n \rightarrow \infty$, for $r \in \{2, \dots, k\}$. Then, by the identity in (\ref{eq32}), it is enough to prove \Cref{theo1} for $\scK_{1}(\dsT_{n})$ instead of $\scK(\dsT_{n})$. By the definition of $\mu_{n}$ and \Cref{Theo3}, for any bounded continuous function $g : \dsR_{+} \rightarrow \dsR_{+}$,
\begin{eqnarray*}
\dsE[g(\sigma^{-1/k}n^{-1+1/2k} \scK_{1}(\dsT_{n})) | \dsT_{n}] = \int g\;{\rm d} \mu_{n} \inlaw \int g\;{\rm d} \nu_{2B^{\rm ex} }, \hspace*{5mm} \text{as} \hspace*{2mm} n \rightarrow \infty.
\end{eqnarray*}

\noindent Taking expectations, the dominated convergence theorem implies that 
$\sigma^{-1/k}n^{-1+1/2k}\scK_{1}(\dsT_{n}) \inlaw Z_{\rm CRT}$, as $n \rightarrow
\infty$, where $Z_{\rm CRT}$ has distribution $\nu(\cdot) = \dsE[\nu_{2B^{\rm ex} }(\cdot)]$.
Suppose that $\dsE[\xi^{p}] < \infty$ for every $p \in \dsN$. \Cref{lemma4} implies
that every moment of $n^{-1+1/2k}\scK_{1}(\dsT_{n})$ stays bounded as $n \rightarrow \infty$
which implies the moment convergence in \Cref{theo1}. It remains
to identify the moments of $Z_{\rm CRT}$ (or equivalently $\nu$). Notice that
\begin{eqnarray*}
\dsE[Z_{\rm CRT}^{q}] = \int x^{q}\;{\rm d} \nu = \dsE \left[ \int x^{q}\;{\rm d} \nu_{2B^{\rm ex} }\right] = \dsE[m_{q}(2B^{\rm ex})], \hspace*{5mm} \text{for} \hspace*{2mm} q \in \dsN.
\end{eqnarray*}

\noindent For $q \in \dsN$, let $U_{1}, \dots, U_{q}$ be independent random variables with the
uniform distribution on $[0,1]$. Let $Y_{1}, \dots, Y_{q}$ be the first $q$ points in a Poisson
process on $(0, \infty)$ with intensity $x\,{\rm d} x$, i.e., $Y_{1}, \dots, Y_{q}$ have joint
density function $y_{1} \cdots y_{q} e^{-y_{q}^{2}/2}$  on $0 < y_{1} < \cdots < y_{q} < \infty$.
%Following \cite[Proof of Lemma 5.1]{Janson2006}, one can relate Aldous \cite{Aldous1993}
%construction of the Brownian continuum random tree via the random function $2 B^{\rm ex}$ with the
%so-called Aldous' line-breaking construction in order to deduce 
It is well-known that $L_{2B^{\rm ex}}(U_{1}, \dots, U_{q}) \stackrel{d}{=} Y_{q}$, see, e.g.,
\cite[Proof of Lemma 5.1]{Janson2006}. Thus by recalling the definition of the
function $H_{2B^{\rm ex}, q}$ in \eqref{eq3}, we see that 
\begin{eqnarray} \label{eq20}
\dsE[m_{q}(2B^{\rm ex})] = q ! \dsE[H_{2B^{\rm ex}, q}(\vecU_{q})] = q! \int_{0}^{y_{1}} \cdots \int_{0}^{y_{q-1}} \int_{0}^{\infty} y_{1} \cdots y_{q} e^{-y_{q}^{2}/2 } \widetilde{F}_{q}(\vecy_{q})\;{\rm d} \vecy_{q},
\end{eqnarray}

\noindent where $\vecU_{q} = (U_{1}, \dots, U_{q})$, $\vecy_{q} = (y_{1}, \dots, y_{q}) \in \dsR_{+}^{q}$ and
\begin{eqnarray*}
\widetilde{F}_{q}(\vecy_{q}) \coloneqq  \int_{0}^{\infty} \int_{0}^{x_{1}} \dots \int_{0}^{x_{q-1}} \exp \left( - \frac{y_{1}x_{1}^{k} + (y_{2}-y_{1})x_{1}^{k} +\dots + (y_{q}-y_{q-1})x_{q}^{k} }{k !} \right)\;{\rm d} \cev{\vecx}_{q}.
\end{eqnarray*}

\noindent Finally, the expression for the moments in \Cref{theo1} follows by first changing the order of integration in \eqref{eq20} and then by making the change of variables $w_{i} = y_{i}-y_{i-1}$ for $2 \leq  i \leq q$. 
\end{proof}

Following the idea of the proof of \Cref{theo1}, we obtain the following convergence of the first
moment of the number of $r$-records $\scK_{r}(\dsT_{n})$. This provides a proof of \cite[Lemma
4.10]{Cai2018}. 

\begin{lemma}
Let $\dsT_{n}$ be a Galton-Watson tree conditioned on its number of vertices being $n \in \dsN$ with offspring distribution $\xi$ satisfying (\ref{eq11}). For $r \in \{1, \dots k\}$, we have that
\begin{eqnarray*}
n^{-1+\frac{r}{2k}} \dsE[\scK_{r}(\dsT_{n})] \rightarrow \frac{(k!)^{\frac{r}{k}}}{k} \frac{\Gamma(\frac{r}{k}) \Gamma \left(1- \frac{r}{2k} \right) }{\Gamma(r)} \left( \frac{\sigma}{\sqrt{2}}\right)^{\frac{r}{k}}  , \hspace*{3mm} \text{as} \hspace*{2mm} n \rightarrow \infty.
\end{eqnarray*}
\end{lemma}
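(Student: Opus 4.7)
The plan is to mimic the strategy already used in the proofs of \Cref{Theo3} and \Cref{theo1}: apply \Cref{lemma9} conditionally on the tree, and then exchange the limit and the expectation over $\dsT_n$. Specifically, first I would extend the joint convergence in \eqref{eq18} from $r=1$ to arbitrary $r \in \{1, \dots, k\}$ by a direct adaptation of \cite[Lemma 4.7]{Janson2006}, obtaining
\[
\left(\widetilde{V}_{n}, \int_{0}^{1} \widehat{V}_{n}(t)^{-r/k}\;{\rm d} t \right) \inlaw \left(2 \sigma^{-1} B^{\rm ex},\, 2^{-r/k} \sigma^{r/k} \int_{0}^{1}  B^{\rm ex}(t)^{-r/k}\;{\rm d} t \right),
\]
where the limit is almost surely finite by \Cref{lemma5}. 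The Skorohod coupling theorem then allows us to realize $(\dsT_{n})_{n \geq 1}$ on a common probability space so that this convergence holds almost surely.

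Under this coupling, the hypotheses of \Cref{lemma9} (with $a_n = n^{-1/2}$ and $f = 2\sigma^{-1} B^{\rm ex}$) are satisfied a.s.: condition (a) follows from the uniform convergence in $C([0,1], \dsR_+)$, the requirement $\max_{v \in \dsT_n} d_n(v) = O(\sqrt{n})$ follows from \eqref{eq10}, and the integrability condition for general $r$ is the content of the above joint convergence. Applying \Cref{lemma9} pointwise on the coupled probability space therefore yields
\[
n^{-1+\frac{r}{2k}}\, \dsE[\scK_{r}(\dsT_{n}) \given \dsT_{n}] \longrightarrow \frac{(k!)^{r/k} \Gamma(r/k)}{k \Gamma(r)} \left( \frac{\sigma}{2} \right)^{r/k} \int_{0}^{1} B^{\rm ex}(t)^{-r/k}\;{\rm d}t \quad \text{a.s.}
\]

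Next, to promote this to convergence of unconditional expectations I would exploit the upper bound established in the proof of \Cref{lemma8}, namely $\dsE[\scK_r(\dsT_n)\given \dsT_n] \le C \sum_{i \ge 1} i^{-r/k} w_i(\dsT_n) + o(n a_n^{r/k})$, together with the moment estimate $\dsE[w_i(\dsT_n)] = O(i)$ from \cite[Theorem 1.13]{Janson2006} which is available under our standing assumption $\dsE[\xi^2] < \infty$. This gives an $L^1$-bound of order $O(1)$ on the rescaled sequence $n^{-1+r/(2k)} \dsE[\scK_r(\dsT_n)\given \dsT_n]$, and a routine truncation at depth $i = \lfloor n^{1/2} \rfloor$ (as in the display preceding \eqref{eq40}) provides a dominating sequence that converges in $L^1$, permitting dominated convergence. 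I expect this uniform-integrability step to be the main technical obstacle, since we only have a second moment of $\xi$ at our disposal; a small refinement via truncation, or a direct computation of $\dsE[(\sum_i i^{-r/k} w_i(\dsT_n))^2]$ using Janson's second-moment bound, should suffice.

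Finally, it remains to identify the constant $\dsE[\int_0^1 B^{\rm ex}(t)^{-r/k}\,{\rm d}t] = \int_0^1 \dsE[B^{\rm ex}(t)^{-r/k}]\,{\rm d}t$. Using the density $f_{B^{\rm ex}(t)}(x) = \sqrt{2/\pi}\, (t(1-t))^{-3/2} x^{2} e^{-x^{2}/(2 t(1-t))}$ (see \cite[Chapter II, Equation (1.4)]{Blu1992}) and the substitution $y = x^2/(2t(1-t))$, the inner integral evaluates to $\sqrt{2/\pi}\, 2^{(1-r/k)/2}\, \Gamma(3/2 - r/(2k))(t(1-t))^{-r/(2k)}$. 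Integrating in $t$ gives the beta integral $B(1-r/(2k),1-r/(2k)) = \Gamma(1-r/(2k))^{2}/\Gamma(2-r/k)$, and a single application of the Legendre duplication formula $\Gamma(z)\Gamma(z+1/2) = 2^{1-2z}\sqrt{\pi}\,\Gamma(2z)$ with $z = 1-r/(2k)$ collapses the expression to $2^{r/(2k)} \Gamma(1-r/(2k))$. Substituting this into the limit obtained above and simplifying $(\sigma/2)^{r/k}\cdot 2^{r/(2k)} = (\sigma/\sqrt{2})^{r/k}$ yields the announced constant.
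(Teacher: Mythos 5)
Your proposal is correct and follows essentially the same route as the paper: extend the joint convergence \eqref{eq18} to general $r$, apply the Skorohod coupling so that \Cref{lemma9} (with $a_{n}=n^{-1/2}$ and $f=2\sigma^{-1}B^{\rm ex}$) can be used pathwise, pass to unconditional expectations via the bound underlying \Cref{lemma8}, and finish by evaluating $\dsE[\int_{0}^{1}B^{\rm ex}(t)^{-r/k}\,{\rm d}t]$ from the excursion density. Note moreover that your value $2^{r/(2k)}\Gamma(1-r/(2k))$ for this expectation is the correct one — it is the value consistent with the lemma's stated constant $(\sigma/\sqrt{2})^{r/k}$, since $(\sigma/2)^{r/k}\cdot 2^{r/(2k)}=(\sigma/\sqrt{2})^{r/k}$ — whereas the paper's displayed intermediate value $2^{-r/(2k)}\Gamma(1-r/(2k))$ has the sign of the exponent flipped.
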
 

\begin{proof}
The proof follows by a simple adaptation of the argument used in the proof of \Cref{theo1} by using \Cref{lemma9} (with $a_{n}=n^{-1/2}$), \Cref{lemma8} and \Cref{lemma5}. One only needs to note that 
\begin{eqnarray*}
\dsE \left[ \int_{0}^{1} B^{\rm ex}(t)^{-r/k}\;{\rm d}t \right] = 2^{-\frac{r}{2k}} \Gamma \left(1- \frac{r}{2k} \right),
\end{eqnarray*}

\noindent which follows from the well-known density function of $B^{\rm ex}(t)$; see \cite[Chapter II, Equation (1.4)]{Blu1992}. 
\end{proof}

%%%%%%%%%%%%%%%%%%%%%%%%%%%%%%%%%%%%%%%%%%%%%%%%%%%%%%%%%%%%%%%%
\section{Further applications} \label{sec4}
%%%%%%%%%%%%%%%%%%%%%%%%%%%%%%%%%%%%%%%%%%%%%%%%%%%%%%%%%%%%%%%%%

In this section, we show that the results obtained in \Cref{sec1} can be used and extended to
study the $k$-cut model in other families of trees. In this section, let $\dsT_{n}$ be a rooted tree
(maybe random and not necessarily ordered) with $n \in \dsN$ vertices and root $\circ$. 

%%%%%%%%%%%%%%%%%%%%%%%%%%%%%%%%%%%%%%%%%%%%%%%%%%%%%%%%%%%%%%%%
\subsection{Paths}
%%%%%%%%%%%%%%%%%%%%%%%%%%%%%%%%%%%%%%%%%%%%%%%%%%%%%%%%%%%%%%%%

\begin{lemma}\label{lem:path}
Let $\dsT_{n}$ be a path with $n$ vertices labelled $1, \dots, n$ from the root to the leaf. For $k \in \{2, 3, \dots \}$, we have that
$n^{-1+1/k} \scK(\dsT_{n})  \inlaw Z_{\rm path}$, as $n \rightarrow \infty$,
where $Z_{\rm path}$ is a non-degenerate random variable whose law is determined entirely by its 
moments: $\dsE[Z_{\rm path}^{q}]  = m_{q}(f)$ for $q \in \dsZ_{\ge 0}$,
where
\[
f(t) = \left\{ \begin{array}{lcl}
              2t, & t \in [0,1/2], \\
              2-2t,  & t \in (1/2,1]. \\
              \end{array}
    \right.
\]
\end{lemma}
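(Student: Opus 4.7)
The plan is to apply \Cref{lemma3} with $a_{n}=1/n$ and $f$ the tent function of the statement, and
then to show that the $r$-record contributions for $r\ge 2$ are of smaller order. First I would
identify the depth-first walk: on the path it rises linearly from depth $0$ to depth $n-1$ and then
descends, so $V_{n}(s)=s$ on $[0,n-1]$ and $V_{n}(s)=2(n-1)-s$ on $[n-1,2(n-1)]$. After the
rescaling in \eqref{eq35}, $a_{n}\widetilde{V}_{n}(t)$ equals $2(n-1)t/n$ on $[0,1/2]$ and
$2(n-1)(1-t)/n$ on $[1/2,1]$, which converges uniformly to $f(t)$. The required bookkeeping
$a_{n}\to 0$ and $na_{n}^{1/k}=n^{1-1/k}\to\infty$ is automatic from $k\ge 2$, so condition~(a) of
\Cref{lemma3} holds.

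For condition~(b), the integral $\int_{0}^{1}f(t)^{-1/k}\,{\rm d}t$ is finite because $1/k<1$, and
direct evaluation gives $k/(k-1)$. On the discrete side, by symmetry of $\widehat{V}_{n}$ about
$t=1/2$ and the substitution $s=2(n-1)t$, one obtains
\[
    \int_{0}^{1}\bigl(a_{n}\widehat{V}_{n}(t)\bigr)^{-1/k}\,{\rm d}t
    = \frac{a_{n}^{-1/k}}{n-1}\sum_{i=1}^{n-1} i^{-1/k},
\]
which tends to the same limit $k/(k-1)$ via the standard $p$-series asymptotic. \Cref{lemma3} then
yields $n^{-1+1/k}\scK_{1}(\dsT_{n})\inlaw Z_{f}$, where $Z_{f}$ has the moments $m_{q}(f)$ supplied
by \Cref{Theo2}.

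It remains to show $n^{-1+1/k}\scK_{r}(\dsT_{n})\inprob 0$ for each $r\in\{2,\dots,k\}$, since
\eqref{eq32} together with Slutsky's theorem will then identify $Z_{\rm path}$ with $Z_{f}$. For
$r\in\{2,\dots,k-1\}$ (nonempty only when $k\ge 3$), the integral
$\int_{0}^{1}f(t)^{-r/k}\,{\rm d}t$ is still finite since $r/k<1$, and \Cref{lemma9} gives
$\dsE[\scK_{r}(\dsT_{n})]=O(n^{1-r/k})=o(n^{1-1/k})$. The genuinely delicate case is $r=k$, where
$f(t)^{-1}$ fails to be integrable near the endpoints of $[0,1]$ and \Cref{lemma9} cannot be
invoked; here I would argue directly by exchangeability of the Gamma$(k)$ alarms, observing that
$\{I_{k,v}=1\}$ is the event that $G_{k,v}$ is the minimum among the $d_{n}(v)+1$ i.i.d.\
Gamma$(k)$ variables attached to $v$ and its strict ancestors, so $\dsE[I_{k,v}]=(d_{n}(v)+1)^{-1}$
and $\dsE[\scK_{k}(\dsT_{n})]=\sum_{i=0}^{n-1}(i+1)^{-1}=H_{n}=O(\log n)=o(n^{1-1/k})$. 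Markov's
inequality then closes the argument. The main obstacle in the above plan is precisely this boundary
case $r=k$, where the tent $f$ vanishes linearly at the endpoints and one must step outside the
framework of \Cref{lemma9} to obtain the necessary decay.
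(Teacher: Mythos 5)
Your proof is correct and follows the same main route as the paper: verify conditions (a) and (b) of \Cref{lemma3} for the tent function $f$ with $a_{n}=n^{-1}$, conclude for $\scK_{1}(\dsT_{n})$, and then kill the $r$-record contributions for $r\ge 2$ via Markov's inequality and \eqref{eq32}. The only place you diverge is in how the bounds $\dsE[\scK_{r}(\dsT_{n})]=O(n^{1-r/k})$ (for $2\le r\le k-1$) and $\dsE[\scK_{k}(\dsT_{n})]=O(\ln n)$ are obtained: the paper simply cites \cite[Theorem 1.1]{Cai2018}, whereas you derive the first group from \Cref{lemma9} and the case $r=k$ from the exact exchangeability identity $\dsE[I_{k,v}]=(d_{n}(v)+1)^{-1}$, giving $\dsE[\scK_{k}(\dsT_{n})]=H_{n}$. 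This is a genuine (if small) improvement in self-containedness, and you correctly flagged the point the citation hides: $\int_{0}^{1}f(t)^{-1}\,{\rm d}t$ diverges for the tent function, so the integral-comparison machinery of \Cref{lemma9} is unavailable at $r=k$ and a separate argument is genuinely needed there. One pedantic caveat: \Cref{lemma9} as stated assumes the convergence of $\int_{0}^{1}(a_{n}\widehat{V}_{n}(t))^{-r/k}\,{\rm d}t$ for \emph{all} $r\in\{1,\dots,k\}$, which fails here at $r=k$; you should note explicitly that its proof works one $r$ at a time, so the single-$r$ hypothesis suffices for the conclusion at that $r$.
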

\begin{proof}
    By \cite[Theorem 1.1]{Cai2018}, we know that $\dsE[\scK_{r}(\dsT_{n})] =
O(n^{1-\frac{r}{k}})$, for $r \in \{1, \dots, k-1\}$, and $\dsE[\scK_{k}(\dsT_{n})] = O(\ln
n)$. Then Markov's inequality implies that $n^{-1+1/k} \scK_{r}(\dsT_{n}) \rightarrow 0$
in probability, as $n \rightarrow \infty$, for $r \in \{2, \dots, k \}$. Thus, by the identity
(\ref{eq32}), it is enough to prove our result for $\scK_{1}(\dsT_{n})$ instead of
$\scK(\dsT_{n})$. 
Note that the normalized depth-first search walks $\widetilde{V}_{n}$ and $\widehat{V}_{n}$ of $\dsT_{n}$, defined in (\ref{eq35}), are given by 
\( n^{-1}\widetilde{V}_{n}(t) = f(t).  \)
\noindent and that $n^{-1}\widehat{V}_{n}(t)= n^{-1}\lceil \widetilde{V}_{n}(t) \rceil$ for $t \in
[0,1]$. It should be clear that the conditions of \Cref{lemma3} are fulfilled with $a_{n} = n^{-1}$. Therefore, our result follows from a simple application of 
\Cref{lemma3}. 
\end{proof}

\begin{remark}
The convergence in distribution and moments of the $k$-cut number of a path to \(Z_{\rm path}\) has
been proved in \cite[Theorem 1.5]{Cai2018} with a very different method. The contribution of
\Cref{lem:path} is the formula  for computing the \(q\)-th moment of the limiting variable \(Z_{\rm
path}\) for all \(q \in \dsZ_{\ge 0}\).
\end{remark}

%%%%%%%%%%%%%%%%%%%%%%%%%%%%%%%%%%%%%%%%%%%%%%%%%%%%%%%%%%%%%%%%
\subsection{General trees}
%%%%%%%%%%%%%%%%%%%%%%%%%%%%%%%%%%%%%%%%%%%%%%%%%%%%%%%%%%%%%%%%

The next result establishes a limit in distribution for the number of $1$-records
$\scK_{1}(\dsT_{n})$ of a general (random) rooted tree in the same spirit as in
\Cref{lemma3}. For $q \in \dsN$, let $u_{1}, \dots, u_{q}$ be a sequence of independent uniformly
chosen vertices on $\dsT_{n}$. Recall that $L_{n}(u_{1}, \dots, u_{q})$ denotes the number of edges
in the subtree of $\dsT_{n}$ spanned by $u_{1}, \dots, u_{q}$ and its root $\circ$ (i.e., the minimal number of edges that are needed to connect $u_{1}, \dots, u_{q}$ and $\circ$). In particular,
$L_{n}(u_{1}) = d_{n}(u_{1})$ is the depth of the vertex $u_{1}$ in $\dsT_{n}$. In the sequel, we
will often use the notation $A_{n} = O_{p}(B_{n})$, where $(A_{n})_{n \geq 1}$ and $(B_{n})_{n \geq
    1}$ are two sequences of non-negative real random variables such that $B_{n} >0$, to indicate
that $\lim_{\delta \rightarrow \infty} \limsup_{n \rightarrow \infty} \dsP(A_{n} > \delta B_{n})
=0$. 

\begin{theorem} \label{Theo4}
Let $(\dsT_{n})_{n \geq 1}$ be a sequence of rooted trees. Suppose that there exists a sequence $(a_{n})_{n \geq 1}$ of non-negative real numbers with $\lim_{n \rightarrow \infty} a_{n} = 0$, $\lim_{n \rightarrow \infty} na_{n}^{1/k} = \infty$ and such that
\begin{itemize}
\item[(a)] $ \max_{v \in \dsT_{n}} L_{n}(v) = O_{\rm p}(a_{n}^{-1}).$ 

\item[(b)] For every $q \in \dsN$, $ \displaystyle a_{n} (L_{n}(u_{1}), \dots,L_{n}(u_{1}, \dots, u_{q}))  \inlaw (\zeta_{1}, \dots, \zeta_{1} + \cdots + \zeta_{q}), \hspace*{1mm} \text{as} \hspace*{1mm} n \rightarrow \infty$, where $\zeta_{1}, \zeta_{2} \dots$ is a sequence of i.i.d.\ random variables in $\dsR_{+}$ with no atom at $0$. 

\item[(c)] For every $q \in \dsN$, $ \displaystyle \dsE[(a_{n}L_{n}(u_{1}) \cdots a_{n}L_{n}(u_{q}))^{-1/k} \mathds{1}_{\{ u_{1}, \dots, u_{q} \in \dsT_{n}  \setminus \{ \circ \} \}} ] \rightarrow \dsE[\zeta_{1}^{-1/k}]^{q} < \infty, \hspace*{1mm} \text{as} \hspace*{1mm} n \rightarrow \infty.$
\end{itemize}

\noindent Then $n^{-1}     a_{n}^{-1/k} \scK_{1}(\dsT_{n})  \inlaw Z_{\zeta}$, as $n
\rightarrow \infty$, where $Z_{\zeta}$ is a random variable whose law is determined entirely by its moments: $\dsE[Z_{\zeta}^{0}]=1$, and for $q \in \dsN$,
\begin{eqnarray*}
\dsE[Z_{\zeta}^{q}] = q! \int_{0}^{\infty} \int_{0}^{x_{1}} \cdots \int_{0}^{x_{q-1}}
\dsE\left[\exp\left({- \frac{\zeta_{1}x^{k}_{1} + \cdots + \zeta_{q}x_{q}^{k}}{k!}}  \right)
\right]\;{\rm d} \cev{\bf x}_{q}
\end{eqnarray*}  
\end{theorem}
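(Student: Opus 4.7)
My plan is to run a method-of-moments argument parallel to the proof of \Cref{lemma3}, but with the random tree handled by re-expressing sums over vertices as expectations over independent uniformly chosen samples $u_{1}, \ldots, u_{q}$ from $\dsT_{n}$. The strategy is to show for every $q \in \dsZ_{\ge 0}$ that $n^{-q} a_{n}^{-q/k} \dsE[\scK_{1}(\dsT_{n})^{q}]$ converges to the claimed $q$-th moment of $Z_{\zeta}$, and then invoke uniqueness of the limit distribution and the classical method of moments.

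First, I would repeat the conditional computation from the proof of \Cref{lemma1}: conditioning on $\dsT_{n}$, \Cref{lemma2} and the cutoff $x_{0}=a_{n}^{\alpha}$ remain available thanks to condition (a), producing (after isolating the contribution from $v_{i} = \circ$)
\begin{equation*}
\dsE[\scK_{1}(\dsT_{n})^{q} \mid \dsT_{n}] = (1 + O_{\rm p}(a_{n}^{q/(2k)})) \, q! \sum_{v_{1}, \ldots, v_{q} \in \dsT_{n} \setminus \{\circ\}} H_{n}(\vecv_{q}) + Y_{q},
\end{equation*}
where $H_{n}(\vecv_{q}) = \int_{0}^{\infty} \int_{0}^{x_{1}} \cdots \int_{0}^{x_{q-1}} G_{n}(\vecv_{q}, \vecx_{q})\,e^{-\sum_{i} x_{i}}\;{\rm d}\cev{\vecx}_{q}$ and $Y_{q}$ is a polynomial in lower-order conditional moments. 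Rewriting the sum as $n^{q}$ times the expectation over independent uniform vertices, taking full expectation over the tree, and making the change of variables $x_{i}=a_{n}^{1/k}w_{i}$, I obtain
\begin{equation*}
n^{-q} a_{n}^{-q/k} \dsE[\scK_{1}(\dsT_{n})^{q}] = (1 + O(a_{n}^{q/(2k)})) \, q! \, \dsE\bigl[\bar{H}_{n}(\vecu_{q})\mathds{1}_{\{u_{1},\ldots,u_{q}\neq \circ\}}\bigr] + (\text{lower order}),
\end{equation*}
where the integrand inside $\bar{H}_{n}(\vecu_{q})$ is $\exp(-\sum_{i} a_{n} D_{n}(u_{1}, \ldots, u_{i}) w_{i}^{k}/k!)\,e^{-a_{n}^{1/k} \sum_{i} w_{i}}$, with the outer exponential tending pointwise to $1$.

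The heart of the argument is passage to the limit in the above display. The bound from the proof of \Cref{Theo2} gives
\begin{equation*}
\bar{H}_{n}(\vecu_{q}) \leq C_{k,q} \prod_{i=1}^{q} (a_{n} L_{n}(u_{1}, \ldots, u_{i}))^{-1/k} \leq C_{k,q} \prod_{i=1}^{q} (a_{n} L_{n}(u_{i}))^{-1/k},
\end{equation*}
where the last inequality uses the deterministic fact $L_{n}(u_{1}, \ldots, u_{i}) \geq L_{n}(u_{i})$ since the spanning subtree contains the root-to-$u_{i}$ path. Condition (c) then asserts that the expectation of this dominating sequence converges to the finite limit $C_{k,q}\,\dsE[\zeta_{1}^{-1/k}]^{q}$, so by the criterion \cite[Proposition 4.12]{Ka2002} the sequence $\bar{H}_{n}(\vecu_{q})\mathds{1}_{\{u_{i}\neq\circ\}}$ is uniformly integrable. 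Meanwhile, condition (b) combined with the continuous mapping theorem---applied to the continuous map $(\lambda_{1}, \ldots, \lambda_{q}) \mapsto \int_{0}^{\infty} \int_{0}^{x_{1}} \cdots \int_{0}^{x_{q-1}} \exp(-\sum_{i} \lambda_{i} x_{i}^{k}/k!) \;{\rm d}\cev{\vecx}_{q}$ on $(0,\infty)^{q}$, well-defined since the $\zeta_{i}$ have no atom at $0$---gives convergence in distribution of $\bar{H}_{n}(\vecu_{q})$ to the functional on the right-hand side of the target moment formula. Uniform integrability plus convergence in distribution yield convergence of expectations, delivering the claimed moment limit, while an induction on $q$ absorbs $Y_{q}$ and the exceptional $u_{i}=\circ$ contributions, both of lower order thanks to $n a_{n}^{1/k}\to\infty$.

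The main obstacle will be this uniform integrability step, since condition (c) controls only the product of the marginal depths $L_{n}(u_{i})$, whereas the Gaussian-type integral bound naturally produces the cumulative spanning sizes $L_{n}(u_{1}, \ldots, u_{i})$; the monotonicity $L_{n}(u_{1}, \ldots, u_{i}) \geq L_{n}(u_{j})$ for $j \leq i$ is the crucial bridge. To conclude, the resulting bound $\dsE[Z_{\zeta}^{q}] \leq q!\, C_{k,q}\,\dsE[\zeta_{1}^{-1/k}]^{q}$ makes the generating function of the candidate moments finite in a neighbourhood of $0$, so they determine $Z_{\zeta}$ uniquely exactly as in \Cref{Theo2}, and the method of moments then yields $n^{-1} a_{n}^{-1/k} \scK_{1}(\dsT_{n}) \inlaw Z_{\zeta}$.
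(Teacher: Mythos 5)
Your proposal is correct and follows essentially the same route as the paper: the conditional moment expansion from \Cref{lemma1}, rewriting the vertex sum as an expectation over i.i.d.\ uniform vertices, the domination $\widehat{H}_{n,q}(\vecu_{q}) \le C_{k,q}\prod_{i}(a_{n}L_{n}(u_{1},\dots,u_{i}))^{-1/k} \le C_{k,q}\prod_{i}(a_{n}L_{n}(u_{i}))^{-1/k}$ combined with conditions (b), (c) and \cite[Proposition 4.12]{Ka2002} for uniform integrability, and induction on $q$ plus the method of moments. The only cosmetic difference is that you pass to the distributional limit of the functional via the continuous mapping theorem where the paper uses a Skorohod coupling followed by dominated convergence; these are interchangeable here.
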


\begin{proof}
By the assumption (a) and \Cref{lemma1} (in particular, the identity \eqref{eq15}), we see that 
\begin{eqnarray*} 
\dsE[\scK_{1}(\dsT_{n})^{q} | \dsT_{n}] = (1 + O( a_{n}^{\frac{q}{2k}} ) ) q! \sum_{v_{1}, \dots, v_{q} \in \dsT_{n} \setminus \{ \circ \}} \widehat{H}_{n,q}(\vecv_{q}) + Y_{q},
\end{eqnarray*}

\noindent where $\vecv_{q} = (v_{1}, \dots, v_{q}) \in \dsT_{n}^{q}$,  
$Y_{q} \coloneqq \sum_{p=0}^{q-1} \sum_{l=0}^{p}  \binom{q}{p} \binom{p}{l} (-1)^{p-l} \dsE[\scK_{1}(\dsT_{n})^{l} | \dsT_{n}]$ and
\begin{eqnarray*}
\widehat{H}_{n,q}(\vecv_{q}) \coloneqq  \int_{0}^{\infty} \int_{0}^{x_{1}} \cdots \int_{0}^{x_{q-1}} G_{n}(\vecv_{q}, { \bf x}_{q})  e^{- \sum_{i=1}^{q}x_{i}}\;{\rm d} \cev{\vecx}_{q},
\end{eqnarray*}

\noindent with $G_{n}$ defined in (\ref{eq29}). Then we see that
\begin{eqnarray*} 
n^{-q}\dsE[\scK_{1}(\dsT_{n})^{q}] =   (1 + O( a_{n}^{\frac{q}{2k}} )) q!    \dsE[\widehat{H}_{n,q}(\vecu_{q}) \mathds{1}_{\{ \vecu_{q} \in (\dsT_{n}  \setminus \{ \circ \})^{q}  \}}] + n^{-q}\dsE[Y_{q}] ,
\end{eqnarray*}

\noindent where $\vecu_{q} = (u_{1}, \dots, u_{q})$. Suppose that we have proven that 
\begin{eqnarray} \label{eq31}
a_{n}^{-q/k}\dsE[\widehat{H}_{n,q}(\vecu_{q}) \mathds{1}_{\{ \vecu_{q} \in (\dsT_{n}  \setminus \{
    \circ \})^{q}  \}}] \rightarrow  \int_{0}^{\infty} \int_{0}^{x_{1}} \cdots \int_{0}^{x_{q-1}}
\dsE\left[\exp\left({- \frac{\zeta_{1}x^{k}_{1} + \cdots + \zeta_{q}x_{q}^{k}}{k!}}  \right) \right]\;{\rm d}
\cev{\bf x}_{q}
.
\end{eqnarray}

\noindent as $n \rightarrow \infty$. Then the result follows by induction on  $q \in \dsN$ together with the previous convergence.

We henceforth prove the claim in \eqref{eq31}. From the result in \eqref{eq9}, it is enough to check the following:
\begin{itemize}
\item[(i)] The sequence $(a_{n}^{-q/k}\widehat{H}_{n,q}(\vecu_{q}) \mathds{1}_{\{ \vecu_{q} \in (\dsT_{n}  \setminus \{ \circ \})^{q} \} })_{n \geq 1}$ is uniformly integrable.

\item[(ii)] $ \displaystyle a_{n}^{-q/k}\widehat{H}_{n,q}(\vecu_{q}) \mathds{1}_{\{ \vecu_{q} \in
        (\dsT_{n}  \setminus \{ \circ \})^{q} \} } \inlaw \int_{0}^{\infty}
    \int_{0}^{x_{1}} \cdots \int_{0}^{x_{q-1}}
    \exp\left({- \frac{\zeta_{1}x^{k}_{1} + \cdots +
            \zeta_{q}x_{q}^{k}}{k!}}  \right)\;{\rm d} \cev{\bf x}_{q}$, as $n
    \rightarrow \infty$. 
\end{itemize}

We start by showing (i). Since $\exp(-(x_{1}+\cdots+ x_{q})) \leq 1$ for $x_{1}, \dots, x_{q} \in \dsR_{+}$, we have that 
\begin{eqnarray*}
\widehat{H}_{n,q}(\vecu_{q}) \leq  \int_{0}^{\infty} \int_{0}^{x_{1}} \cdots \int_{0}^{x_{q-1}} G_{n}(\vecu_{q}, { \bf x}_{q})\;{\rm d} \cev{\vecx}_{q}
.
\end{eqnarray*}

\noindent Hence after a similar computation as in the proof of the inequality \eqref{eq5}, one obtains that there exists a constant $C_{k,q} > 0$ such that 
\begin{eqnarray*}
a_{n}^{-q/k} \widehat{H}_{n,q}(\vecu_{q}) \mathds{1}_{\{ \vecu_{q} \in (\dsT_{n}  \setminus \{ \circ \})^{q} \} }  \leq   C_{k,q} a_{n}^{-q/k}  (L_{n}(u_{1}) \cdots L_{n}(u_{q}))^{-1/k} \mathds{1}_{\{ \vecu_{q} \in (\dsT_{n}  \setminus \{ \circ \})^{q} \}}.
\end{eqnarray*}

\noindent Notice that our hypotheses (b) and (c) together with the result in \eqref{eq9} show that
the sequence 
\begin{eqnarray*}
a_{n}^{-q/k} ( (L_{n}(u_{1}) \cdots L_{n}(u_{q}))^{-1/k} \mathds{1}_{\{ \vecu_{q} \in (\dsT_{n}  \setminus \{ \circ \})^{q} \}})_{n \geq 1}
\end{eqnarray*}

\noindent is uniformly integrable. Hence (i) follows from \cite[Theorem 5.4.5]{Gut2013}.  

Finally, we verify (ii). By making the change of variables $x_{i} = a_{n}^{1/k} w_{i}$, for $1 \leq i \leq q$, we see that
\begin{eqnarray*}
\widehat{H}_{n,q}(\vecu_{q}) = a_{n}^{q/k} \int_{0}^{\infty} \int_{0}^{w_{1}} \cdots \int_{0}^{w_{q-1}} \bar{G}_{n}(\vecu_{q}, { \bf w}_{q})  e^{- a_{n}^{1/k}\sum_{i=1}^{q}w_{i}}\;{\rm d} \cev{\vecw}_{q},
\end{eqnarray*}

\noindent where $\vecw_{q} = (w_{1}, \dots, w_{q}) \in \dsR_{+}^{q}$, $\cev{\vecw}_{q} = (w_{q}, \dots, w_{1})$, and
\begin{eqnarray*} 
\bar{G}_{n}(\vecu_{q}, \vecw_{q}) \coloneqq \exp \left( - \frac{a_{n}D_{n}(u_{1})w_{1}^{k} + \dots + a_{n}D_{n}(u_{1}, \dots, u_{q})w_{q}^{k} }{k !} \right), 
\end{eqnarray*}

\noindent with $D_{n}(u_{1}) \coloneqq L_{n}(u_{1})$ and $D_{n}(u_{1}, \dots, u_{q}) \coloneqq L_{n}(u_{1}, \dots, u_{q}) - L_{n}(u_{1}, \dots, u_{q-1})$ for $q \geq 2$. Notice  that $\mathds{1}_{\{ \vecu_{q} \in (\dsT_{n}  \setminus \{ \circ \})^{q} \} } \inlaw 1$, as $n \rightarrow \infty$. Thus, condition (b) implies that 
\begin{eqnarray*}
\bar{G}_{n}(\vecu_{q}, \vecw_{q}) \mathds{1}_{\{ \vecu_{q} \in (\dsT_{n}  \setminus \{ \circ \})^{q}
    \} } \inlaw  \exp\left({- \frac{\zeta_{1}w^{k}_{1} + \cdots + \zeta_{q}w_{q}^{k}}{k!}}  \right),  \hspace*{3mm} \text{as} \hspace*{2mm} n \rightarrow \infty
\end{eqnarray*}

\noindent By the Skorohod coupling theorem (see e.g.\ \cite[Theorem 4.30]{Ka2002}), we can assume that the
previous convergence holds almost surely together with the convergence in condition (b). Notice that
for $\varepsilon \in (0,1)$ there exists $N \in \dsN$ such that 
$$\bar{G}_{n}(\vecv_{q}, \vecw_{q}) \mathds{1}_{\{ \vecv_{q} \in (\dsT_{n}  \setminus \{ \circ \})^{q} \} }  \leq 
\exp\left({-(1- \varepsilon) \zeta_{1}w_{1}^{k}/k!}  \right), \hspace*{3mm} \text{for} \hspace*{2mm} n \geq N. 
$$ 
By condition (c), notice also that the function on the
right-hand side is integrable on $\{ \vecw_{q} \in \dsR_{+}^{q}: 0 \leq w_{q} \leq \cdots \leq w_{1}
< \infty \}$. Therefore, it should be clear now that (ii) follows by the dominated convergence
theorem. This concludes our proof. 
\end{proof}

The next result establishes an estimate for the mean number of $r$-records $\scK_{r}(\dsT_{n})$ of a
general (random) rooted tree in the same spirit as in \Cref{lemma9}. Furthermore, it shows that
$\scK_{r}(\dsT_{n})$ is of smaller order than $\scK_{1}(\dsT_{n})$ and hence it will not contribute
(in the limit) to the distribution of the $k$-cut number $\scK(\dsT_{n})$. We believe as well that
our methods can be used to estimate higher moments and to obtain an analogue result to \Cref{Theo4} for $\scK_{r}(\dsT_{n})$. We have not attempted to do it and the estimation of the mean is enough for our purpose. 

\begin{lemma} \label{lemma10}
Let $(\dsT_{n})_{n \geq 1}$ be a sequence of rooted trees. Suppose that there exists a sequence $(a_{n})_{n \geq 1}$ of non-negative real numbers with $\lim_{n \rightarrow \infty} a_{n} = 0$, $\lim_{n \rightarrow \infty} na_{n} = \infty$ and such that
\begin{itemize}
\item[(a)] $ \max_{v \in \dsT_{n}} L_{n}(v) = O_{\rm p}(a_{n}^{-1}).$ 

\item[(b)] $ \displaystyle a_{n} L_{n}(u_{1})  \inlaw \zeta_{1}, \hspace*{1mm} \text{as} \hspace*{1mm} n \rightarrow \infty$, where $\zeta_{1}$ is a random variable in $\dsR_{+}$ with no atom at $0$. 

\item[(c)] For every $r \in \{1, \dots k\}$, $ \displaystyle \dsE[(a_{n}L_{n}(u_{1}))^{-r/k} \mathds{1}_{\{ u_{1} \in \dsT_{n}  \setminus \{ \circ \} \}} ] \rightarrow \dsE[\zeta_{1}^{-r/k}] < \infty, \hspace*{1mm} \text{as} \hspace*{1mm} n \rightarrow \infty.$
\end{itemize}

\noindent Then, for $r \in \{1, \dots k\}$,
\begin{eqnarray} \label{eq42}
   n^{-1} a_{n}^{-r/k}   \dsE[\scK_{r}(\dsT_{n})] \rightarrow \frac{(k!)^{r/k} \Gamma(r/k)}{k
    \Gamma(r)}\dsE[\zeta_{1}^{-r/k}], \hspace*{3mm} \text{as} \hspace*{2mm} n
\rightarrow \infty.
\end{eqnarray}
\end{lemma}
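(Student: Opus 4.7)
The plan is to adapt the approach of \Cref{lemma7} to the random-tree setting by converting the sum over vertices of $\dsT_n$ into an expectation over a uniformly chosen vertex $u_1$, and then to use conditions~(b) and~(c) together with uniform integrability to pass to the limit.

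First I would start from the exact identity
\begin{eqnarray*}
\dsE[I_{r,v} \, | \, \dsT_n] = \int_0^\infty \frac{x^{r-1} e^{-x}}{\Gamma(r)} \dsP(\mathrm{Gamma}(k) > x)^{L_n(v)}\,\mathrm{d}x,
\end{eqnarray*}
which holds because $L_n(v)$ is the depth of $v$ and $G_{r,v}$ has the Gamma$(r)$ density. Summing over $v \in \dsT_n$ via $\sum_v h(v) = n\,\dsE[h(u_1)\,|\,\dsT_n]$, separating the root's contribution (which is always $1$), and applying the change of variables $x = a_n^{1/k} w$ then yields
\begin{eqnarray*}
n^{-1} a_n^{-r/k} \dsE[\scK_r(\dsT_n)] = \dsE\!\left[\Phi_n(u_1) \mathds{1}_{\{u_1 \neq \circ\}}\right] + o(1),
\end{eqnarray*}
where
\begin{eqnarray*}
\Phi_n(u_1) \coloneqq \int_0^\infty \frac{w^{r-1} e^{-a_n^{1/k} w}}{\Gamma(r)} \dsP(\mathrm{Gamma}(k) > a_n^{1/k} w)^{L_n(u_1)}\,\mathrm{d}w,
\end{eqnarray*}
and the $o(1)$ term arises from the root and vanishes because $na_n \to \infty$ with $r \leq k$ and $a_n \to 0$ forces $na_n^{r/k} \to \infty$.

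Next I would identify the pointwise limit of $\Phi_n(u_1)\mathds{1}_{\{u_1 \neq \circ\}}$. By condition~(b) and the Skorohod coupling theorem (as in the proof of \Cref{Theo3}), I may assume $a_n L_n(u_1) \to \zeta_1$ almost surely. Since $\zeta_1$ has no atom at $0$, $L_n(u_1) \to \infty$ a.s., and the Taylor expansion $\dsP(\mathrm{Gamma}(k) > y) = 1 - y^k/k! + O(y^{k+1})$ as $y \to 0$ gives, for each fixed $w > 0$,
\begin{eqnarray*}
\dsP(\mathrm{Gamma}(k) > a_n^{1/k} w)^{L_n(u_1)} \longrightarrow \exp\!\left(-\frac{\zeta_1 w^k}{k!}\right).
\end{eqnarray*}
Combined with $e^{-a_n^{1/k}w} \to 1$ and dominated convergence inside the integral, this yields
\begin{eqnarray*}
\Phi_n(u_1) \mathds{1}_{\{u_1 \neq \circ\}} \longrightarrow \frac{(k!)^{r/k} \Gamma(r/k)}{k \Gamma(r)}\,\zeta_1^{-r/k} \qquad \text{almost surely.}
\end{eqnarray*}

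The last step is to interchange this limit with the outer expectation, which requires uniform integrability. I would establish the bound
\begin{eqnarray*}
\Phi_n(u_1) \mathds{1}_{\{u_1 \neq \circ\}} \leq C\, (a_n L_n(u_1))^{-r/k}\, \mathds{1}_{\{u_1 \neq \circ\}}
\end{eqnarray*}
for a constant $C$ independent of $n$, by splitting the $w$-integration at $a_n^{1/k} w = y_*$, where $y_* > 0$ is chosen small enough that $\dsP(\mathrm{Gamma}(k) > y) \leq \exp(-y^k/(2k!))$ on $[0, y_*]$. On the main range $w \leq y_*/a_n^{1/k}$ the integrand is dominated by $\frac{w^{r-1}}{\Gamma(r)}\exp(-a_n L_n(u_1) w^k/(2k!))$, which integrates explicitly to a constant multiple of $(a_n L_n(u_1))^{-r/k}$. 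On the tail range $w > y_*/a_n^{1/k}$, the uniform estimate $\dsP(\mathrm{Gamma}(k) > a_n^{1/k}w)^{L_n(u_1)} \leq c_*^{L_n(u_1)}$, where $c_* \coloneqq \dsP(\mathrm{Gamma}(k) > y_*) < 1$, produces a contribution bounded by a multiple of $a_n^{-r/k} c_*^{L_n(u_1)}$; since $L^{r/k} c_*^{L}$ is bounded for $L \geq 1$, this is absorbed into $(a_n L_n(u_1))^{-r/k}$. Condition~(c) and the characterization~\eqref{eq9} give uniform integrability of $((a_n L_n(u_1))^{-r/k}\mathds{1}_{\{u_1\neq\circ\}})_{n\ge 1}$, which transfers to $(\Phi_n(u_1)\mathds{1}_{\{u_1\neq\circ\}})_{n\ge 1}$, and~\eqref{eq9} together with the a.s.~convergence above yields~\eqref{eq42}. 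The main technical obstacle is this uniform dominating bound: controlling the tail of the $w$-integration uniformly in $L_n(u_1) \geq 1$, precisely where the Gaussian-type approximation to $\dsP(\mathrm{Gamma}(k)>y)^L$ breaks down.
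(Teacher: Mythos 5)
Your proposal is correct and follows essentially the same route as the paper: the paper likewise rewrites $n^{-1}a_{n}^{-r/k}\dsE[\scK_{r}(\dsT_{n})]$ as an expectation over a uniformly chosen vertex $u_{1}$ (via the identity \eqref{eq38} from \Cref{lemma7}) and then adapts the proof of \Cref{Theo4} for $q=1$, namely pointwise convergence from condition (b) via Skorohod coupling combined with uniform integrability from condition (c) through a domination by a constant multiple of $(a_{n}L_{n}(u_{1}))^{-r/k}$. The only difference is where the approximation is performed: you keep the exact kernel $\dsP(\mathrm{Gamma}(k)>x)^{L_{n}(v)}$ and absorb the Gamma-to-exponential comparison into your dominating bound (with the clean tail estimate $L^{r/k}c_{*}^{L}=O(1)$), whereas the paper first replaces the kernel by $e^{-L_{n}(v)x^{k}/k!}$ using \Cref{lemma2}.
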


\begin{proof}
By the assumption (a) and \Cref{lemma7} (in particular, the identity \eqref{eq38}), we see that
\begin{eqnarray*} 
\dsE[\scK_{r}(\dsT_{n}) | \dsT_{n}] =   (1+O(a_{n}^{\frac{1}{2k}}))  \sum_{v \in \dsT_{n} \setminus \{\circ \}} \int_{0}^{\infty} \frac{x^{r-1}e^{-x}}{\Gamma(r)} e^{-\frac{L_{n}(v)x^{k}}{k!}}\;{\rm d} x  + o(n a_{n}^{r/k}). \\
\end{eqnarray*}

\noindent Hence
\begin{eqnarray*}
n^{-1}\dsE[\scK_{r}(\dsT_{n})]  = (1+O(a_{n}^{\frac{1}{2k}})) \int_{0}^{\infty} \frac{x^{r-1}e^{-x}}{\Gamma(r)} \mathbb{E}\left[e^{-\frac{L_{n}(u_{1})x^{k}}{k!}} \mathds{1}_{\{ u_{1} \in \dsT_{n}  \setminus \{ \circ \} \}} \right] \;{\rm d} x  + o(a_{n}^{r/k}).
\end{eqnarray*}

\noindent Therefore, our result follows by proving that 
\begin{eqnarray*}
a_{n}^{-r/k} \int_{0}^{\infty} \frac{x^{r-1}e^{-x}}{\Gamma(r)} \mathbb{E}\left[e^{-\frac{L_{n}(u_{1})x^{k}}{k!}} \mathds{1}_{\{ u_{1} \in \dsT_{n}  \setminus \{ \circ \} \}} \right] \;{\rm d} x  \rightarrow  \int_{0}^{\infty} \frac{x^{r-1}}{\Gamma(r)} \mathbb{E} \left[ e^{-\frac{\zeta_{1}  x^{k}}{k!}}\; \right] {\rm d} x, \hspace*{3mm} \text{as} \hspace*{2mm} n \rightarrow \infty,
\end{eqnarray*}

\noindent where the last integral is equal to the right-hand side of (\ref{eq42}). Note that the case $r =1$ has been proved in \Cref{Theo4}. The proof of the general case $r \in \{1, \dots, k\}$ follows by a simple adaptation of the argument used in the proof of \Cref{Theo4} for $q=1$ and details are left to the reader.
\end{proof}

The next lemma provides a useful way to verify condition (c) in \Cref{Theo4}.

\begin{lemma} \label{lemma6}
Let $\dsT_{n}$ be a rooted tree. Suppose that there exists a sequence $(a_{n})_{n \geq 1}$ of
non-negative real numbers with $\lim_{n \rightarrow \infty} a_{n} = 0$, $\lim_{n \rightarrow \infty}
na_{n}^{1/k} = \infty$ and such that for every $q \in \dsN$, 
\begin{eqnarray*}
a_{n}(L_{n}(u_{1}), \cdots, L_{n}(u_{q}))  \inlaw (\zeta_{1}, \dots, \zeta_{q}), \hspace*{3mm} \text{as} \hspace*{2mm} n \rightarrow \infty,
\end{eqnarray*}

\noindent where $\zeta_{1}, \zeta_{2} \dots$ is a sequence of i.i.d.\ random variables in $\dsR_{+}$ with no atom at $0$ such that $\dsE[\zeta_{1}^{-1/k}] < \infty$. Furthermore, assume that for every $q \in \dsN$ there exists $\delta >0$ such that for all $\varepsilon \in (0, \delta)$
\begin{eqnarray} \label{eq34}
\dsE[W_{i}(\dsT_{n})] = o( n a_{n}^{q/k+1}), \hspace*{3mm} \text{uniformly on} \hspace*{3mm} 0 \leq i \leq \varepsilon^{1/q} a_{n}^{-1},
\end{eqnarray}

\noindent where $W_{i}(\dsT_{n})$ denotes the number of vertices a depth $i \in \dsZ_{\ge 0}$ in $\dsT_{n}$. Then the condition (c) in \Cref{Theo4} is satisfied
\end{lemma}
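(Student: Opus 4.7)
The plan is to verify condition (c) of \Cref{Theo4} by combining the weak convergence assumed in the present lemma with a uniform integrability argument powered by the hypothesis on $\dsE[W_i(\dsT_n)]$. Set $X_n \coloneqq \prod_{j=1}^{q} (a_n L_n(u_j))^{-1/k} \mathds{1}_{\{u_1, \dots, u_q \in \dsT_n \setminus \{\circ\}\}}$ and $X \coloneqq \prod_{j=1}^{q} \zeta_j^{-1/k}$. Since the $\zeta_j$'s have no atom at $0$, the map $(x_1,\dots,x_q)\mapsto\prod_j x_j^{-1/k}$ is continuous at $(\zeta_1,\dots,\zeta_q)$ almost surely; combined with the joint weak convergence hypothesized in the lemma and the elementary fact $\dsP(u_j = \circ) = 1/n \to 0$, the continuous mapping theorem and Slutsky yield $X_n \inlaw X$. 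By the independence of the $\zeta_j$'s and $\dsE[\zeta_1^{-1/k}] < \infty$, we have $\dsE[X] = \dsE[\zeta_1^{-1/k}]^{q} < \infty$. Once uniform integrability of $(X_n)_{n \ge 1}$ is established, \eqref{eq9} will yield $\dsE[X_n] \to \dsE[X]$, which is exactly condition (c) of \Cref{Theo4}.

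The core of the argument is the tail bound. The key observation is that $X_n > M$ forces $\prod_{j=1}^{q} a_n L_n(u_j) < M^{-k}$, so at least one factor must satisfy $a_n L_n(u_{j^\ast}) \le M^{-k/q}$. A union bound together with the symmetry in $u_1,\dots,u_q$ gives
\begin{equation*}
\dsE[X_n; X_n > M] \le q\, \dsE\bigl[X_n \mathds{1}_{\{a_n L_n(u_1) \le M^{-k/q}\}}\bigr].
\end{equation*}
Conditioning on $\dsT_n$ (under which the $u_j$'s are independent uniform) and using that $d_n(v) \ge 1$ for $v \neq \circ$, each of the $q-1$ remaining factors $\dsE[L_n(u_j)^{-1/k} \mathds{1}_{u_j \neq \circ} \mid \dsT_n]$ is at most $1$, so the conditional expectation reduces to $a_n^{-q/k} n^{-1} \sum_{i=1}^{\lfloor M^{-k/q} a_n^{-1}\rfloor} i^{-1/k} W_i(\dsT_n)$. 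Taking expectations, writing $\varepsilon = M^{-k}$ (so that $M^{-k/q} = \varepsilon^{1/q}$) and choosing $M > \delta^{-1/k}$ so that $\varepsilon \in (0,\delta)$, the hypothesis $\dsE[W_i(\dsT_n)] = o(n a_n^{q/k + 1})$ uniform in $i \le \varepsilon^{1/q} a_n^{-1}$ together with the elementary estimate $\sum_{i=1}^{N} i^{-1/k} = O(N^{1-1/k})$ (valid since $k \ge 2$) yields
\begin{equation*}
\dsE[X_n; X_n > M] \le a_n^{-q/k} \cdot o(a_n^{q/k+1}) \cdot O\bigl((\varepsilon^{1/q} a_n^{-1})^{1-1/k}\bigr) = o\bigl(\varepsilon^{(1-1/k)/q} a_n^{1/k}\bigr) \xrightarrow[n\to\infty]{} 0.
\end{equation*}
In particular $\limsup_n \dsE[X_n; X_n > M] = 0$ for every sufficiently large $M$, which is strictly stronger than what is needed for uniform integrability.

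Combining the weak convergence $X_n \inlaw X$ with this uniform tail control through \eqref{eq9} closes the argument: $\dsE[X_n] \to \dsE[X] = \dsE[\zeta_1^{-1/k}]^{q}$, as required. I expect the main obstacle to be setting up the tail estimate cleanly, namely spotting the correct ``which coordinate is responsible'' union bound: the product $X_n$ blows up whenever \emph{any} single depth $L_n(u_j)$ is small, so reducing to a one-dimensional depth calculation is what makes the hypothesis $\dsE[W_i(\dsT_n)] = o(n a_n^{q/k+1})$ do its job and cancel the singular weight $a_n^{-q/k}$ after the harmonic-type sum $\sum i^{-1/k}$ is computed over the relevant depth range.
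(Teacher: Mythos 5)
Your proof is correct, and it reaches the same essential destination as the paper's---both arguments reduce everything to controlling the contribution of vertices at depth $i \le \varepsilon^{1/q} a_n^{-1}$, where the hypothesis $\dsE[W_i(\dsT_n)] = o(na_n^{q/k+1})$ exactly cancels the singular weight $a_n^{-q/k}$---but your execution differs in two ways worth recording. First, the paper does not work with the tail $\dsE[X_n; X_n > M]$ directly; it introduces a continuous cutoff $\phi_\varepsilon$ (equal to $0$ on $[0,\varepsilon]$ and $1$ on $[2\varepsilon,\infty)$), proves $\dsE[X_{n,q}\phi_\varepsilon(X_{n,q}^{-k})] \to \dsE[X_q\phi_\varepsilon(X_q^{-k})]$ by weak convergence of the bounded continuous functional, and then sends $\varepsilon \to 0$; your direct uniform-integrability tail bound is an equivalent but slightly cleaner packaging, and your observation that each $X_n$ is bounded by $a_n^{-q/k}$ (so that $\limsup_n$ control upgrades to $\sup_n$ control) is the right way to close that loop. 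Second, and more substantively: the paper passes from $\{X_{n,q}^{-k} \le \varepsilon\}$ to the \emph{intersection} $\bigcap_j \{1 \le L_n(u_j) \le \varepsilon^{1/q}a_n^{-1}\}$, i.e., it asserts that a small product forces every factor to be small. That inclusion is false in general (one factor of order $a_n$ and another of order $1$ already violate it), whereas your version---a small product forces \emph{at least one} factor below $\varepsilon^{1/q}$, followed by a union bound over which coordinate is responsible and exchangeability of $u_1,\dots,u_q$---is the correct statement. Your subsequent conditioning on $\dsT_n$, bounding the $q-1$ unconstrained factors by $a_n^{-1/k}$ each and evaluating the constrained one as $a_n^{-1/k}n^{-1}\sum_{i\le \varepsilon^{1/q}a_n^{-1}} i^{-1/k}W_i(\dsT_n)$, then yields $o(a_n^{1/k}) \to 0$ exactly as you compute, so the lemma's hypothesis is still strong enough under the corrected combinatorics. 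The only cosmetic caveat is that \eqref{eq9} as stated concerns almost-everywhere convergence, so to pass from $X_n \inlaw X$ plus uniform integrability to $\dsE[X_n]\to\dsE[X]$ you should formally invoke the Skorohod coupling first (as the paper does elsewhere); this is routine.
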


\begin{proof}
For simplicity, we introduce the notation $X_{n,q} \coloneqq (a_{n}L_{n}(u_{1}) \cdots a_{n}L_{n}(u_{q}))^{-1/k}\mathds{1}_{\{ \vecu_{q} \in (\dsT_{n}  \setminus \{ \circ \})^{q} \} }$ and $X_{q} \coloneqq (\zeta_{1} \cdots \zeta_{q})^{-1/k}$, for $n, q \in \dsN$. Consider $\delta >0$ such that for $\varepsilon \in (0,\delta)$ the property in \eqref{eq34} is satisfied. Define the function $\phi_{\varepsilon}: \dsR_{+} \rightarrow \dsR_{+}$ given by $\phi_{\varepsilon} = 0$ on $[0,\varepsilon]$, $\phi_{\varepsilon} = 1$ on $[2 \varepsilon, \infty)$, and $\phi_{\varepsilon}$ linear on $[\varepsilon, 2 \varepsilon]$.
Since $\mathds{1}_{\{ \vecu_{q} \in (\dsT_{n}  \setminus \{ \circ \})^{q} \} } \inlaw 1$ we observe that
\begin{eqnarray*}
 \dsE[X_{n,q} \phi_{\varepsilon}(X_{n,q}^{-k})] \rightarrow \dsE[X_{q} \phi_{\varepsilon}(X^{-k}_{q})], \hspace*{3mm} \text{as} \hspace*{2mm} n \rightarrow \infty.
\end{eqnarray*} 

\noindent Further, we note that $\phi_{\varepsilon}(X^{-k}_{q}) \rightarrow 1$, almost surely, as $\varepsilon \rightarrow 0$. In order to show that condition (c) in \Cref{Theo4} is fulfilled, it is enough to check that 
\begin{eqnarray} \label{eq37}
\lim_{\varepsilon \rightarrow 0} \lim_{n \rightarrow \infty} \dsE[(X_{n,q} -X_{n,q} \phi_{\varepsilon}(X_{n,q}^{-k}))] = 0. 
\end{eqnarray}

\noindent Notice that
\begin{eqnarray*}
\dsE[(X_{n,q} -X_{n,q} \phi_{\varepsilon}(X_{n,q}^{-k}))] \leq \dsE \left[X_{n,q} \mathds{1}_{ \{ X_{n,q}^{-k} \leq  \varepsilon \}} \right].
\end{eqnarray*}

\noindent Since $\{ X_{n,q}^{-k} \leq  \varepsilon \} \subseteq \{ 1 \leq L_{n}(u_{1}) \leq \varepsilon^{1/q} a_{n}^{-1} \} \cap \cdots \cap \{ 1 \leq L_{n}(u_{q}) \leq \varepsilon^{1/q} a_{n}^{-1} \}$, it is not difficult to see that
\begin{eqnarray*}
\dsE[(X_{n,q} -X_{n,q} \phi_{\varepsilon}(X_{n,q}^{-k}))] & \leq & \dsE\left[ \left( \frac{1}{n}\sum_{v \in \dsT_{n} \setminus \{ \circ\}} (a_{n}L_{n}(v))^{-1/k} \mathds{1}_{ \{ L_{n}(v) \leq  \varepsilon^{1/q} a_{n}^{-1} \}} \right)^{q}  \right] \\
& \leq & \frac{1}{n}\dsE\left[ \sum_{v \in \dsT_{n} \setminus \{ \circ\}} (a_{n}L_{n}(v))^{-q/k} \mathds{1}_{ \{ L_{n}(v) \leq  \varepsilon^{1/q} a_{n}^{-1} \}}  \right],
\end{eqnarray*}

\noindent where we have used Jensen's inequality to obtain the second inequality. Finally, by our choice of $\varepsilon$ (recall assumption \eqref{eq34}), we observe that 
\begin{eqnarray*}
\dsE[(X_{n,q} -X_{n,q} \phi_{\varepsilon}(X_{n,q}^{-k}))] & \leq & n^{-1} a_{n}^{-q/k}
\sum_{i=1}^{\lfloor \varepsilon^{1/q} a_{n}^{-1} \rfloor} i^{-q/k} \dsE[W_{i}(\dsT_{n})] = o(1).
\end{eqnarray*}

\noindent This clearly implies \eqref{eq37} and concludes our proof.
\end{proof}

Similarly, we also provide a useful way to verify condition (c) in \Cref{lemma10}.

\begin{lemma} \label{lemma11}
Let $\dsT_{n}$ be a rooted tree. Suppose that there exists a sequence $(a_{n})_{n \geq 1}$ of
non-negative real numbers with $\lim_{n \rightarrow \infty} a_{n} = 0$, $\lim_{n \rightarrow \infty}
na_{n} = \infty$ and such that the condition (b) in \Cref{lemma10} holds with a random variable
$\zeta_{1}$ satisfying $\dsE[\zeta_{1}^{-r/k}] < \infty$ for every $r \in \{1, \dots, k\}$.
Furthermore, assume that for every $r \in \{1, \dots, k\}$ there exists $\delta >0$ such that for all
$\varepsilon \in (0, \delta)$
\begin{eqnarray} \label{eq34a}
\dsE[W_{i}(\dsT_{n})] = o( n a_{n}^{r/k+1}), \hspace*{3mm} \text{uniformly on} \hspace*{3mm} 0 \leq i \leq \varepsilon a_{n}^{-1},
\end{eqnarray}

\noindent where $W_{i}(\dsT_{n})$ denotes the number of vertices at depth $i \in \dsZ_{\ge 0}$ in $\dsT_{n}$. Then the condition (c) in \Cref{lemma10} is fulfilled.
\end{lemma}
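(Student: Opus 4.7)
The plan is to mimic the proof of \Cref{lemma6} almost verbatim, the only change being that the multivariate truncation across $q$ uniformly sampled vertices is replaced by a univariate one involving just $u_{1}$. Fix $r \in \{1, \dots, k\}$, set $X_{n} \coloneqq (a_{n} L_{n}(u_{1}))^{-r/k} \mathds{1}_{\{u_{1} \in \dsT_{n} \setminus \{\circ\}\}}$ and $X \coloneqq \zeta_{1}^{-r/k}$. Since $\dsP(u_{1} = \circ) = 1/n \rightarrow 0$ and $\zeta_{1} > 0$ almost surely, condition (b) of \Cref{lemma10} combined with the continuous mapping theorem gives $X_{n} \inlaw X$ as $n \rightarrow \infty$. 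The goal $\dsE[X_{n}] \rightarrow \dsE[X]$ therefore reduces to uniform integrability of $(X_{n})_{n \geq 1}$.

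For the truncation I would reuse the bump function $\phi_{\varepsilon}$ from the proof of \Cref{lemma6} (so $\phi_{\varepsilon}=0$ on $[0,\varepsilon]$, $\phi_{\varepsilon}=1$ on $[2\varepsilon,\infty)$, and $\phi_{\varepsilon}$ is linear on $[\varepsilon, 2\varepsilon]$). For each fixed $\varepsilon > 0$ the map $y \mapsto y^{-r/k} \phi_{\varepsilon}(y)$ is bounded and continuous on $(0, \infty)$, so condition (b) of \Cref{lemma10} yields
\begin{equation*}
\dsE\bigl[X_{n} \phi_{\varepsilon}(a_{n} L_{n}(u_{1}))\bigr] \rightarrow \dsE\bigl[\zeta_{1}^{-r/k} \phi_{\varepsilon}(\zeta_{1})\bigr] \qquad \text{as } n \rightarrow \infty,
\end{equation*}
and the hypotheses that $\zeta_{1}$ has no atom at $0$ and $\dsE[\zeta_{1}^{-r/k}] < \infty$ let dominated convergence conclude that $\dsE[\zeta_{1}^{-r/k} \phi_{\varepsilon}(\zeta_{1})] \rightarrow \dsE[\zeta_{1}^{-r/k}]$ as $\varepsilon \rightarrow 0$. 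It therefore suffices to prove
\begin{equation*}
\lim_{\varepsilon \to 0} \limsup_{n \to \infty} \dsE\bigl[X_{n}\bigl(1 - \phi_{\varepsilon}(a_{n} L_{n}(u_{1}))\bigr)\bigr] = 0.
\end{equation*}

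This last step is the only genuine computation. Bounding $1 - \phi_{\varepsilon}$ by $\mathds{1}_{[0, 2\varepsilon]}$ and rewriting the expectation over the uniformly chosen $u_{1}$ as an average over the vertices of $\dsT_{n}$ grouped by depth,
\begin{equation*}
\dsE\bigl[X_{n}\bigl(1 - \phi_{\varepsilon}(a_{n} L_{n}(u_{1}))\bigr)\bigr] \leq \frac{1}{n \, a_{n}^{r/k}} \sum_{i=1}^{\lfloor 2 \varepsilon a_{n}^{-1} \rfloor} i^{-r/k} \dsE[W_{i}(\dsT_{n})].
\end{equation*}
Provided $2\varepsilon < \delta$, the hypothesis \eqref{eq34a} gives $\dsE[W_{i}(\dsT_{n})] = o(n a_{n}^{r/k+1})$ uniformly in this range of $i$, so the right-hand side is at most $o(a_{n}) \sum_{i=1}^{\lfloor 2 \varepsilon a_{n}^{-1} \rfloor} i^{-r/k}$. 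A short case split on the regime of $r/k$ (the partial sum is $O((\varepsilon/a_{n})^{1-r/k})$ when $r < k$ and $O(\log(1/a_{n}))$ when $r = k$) shows this quantity is $o(1)$ as $n \rightarrow \infty$ for every fixed $\varepsilon$, and the outer $\varepsilon \rightarrow 0$ limit is then immediate.

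The only place requiring any care is the harmonic-sum case analysis in the final display, which is precisely where \eqref{eq34a} is exploited; once this bound is in hand, the rest of the argument is a direct univariate transcription of the scheme used for \Cref{lemma6}.
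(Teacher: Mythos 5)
Your proposal is correct and is precisely the univariate adaptation of the proof of \Cref{lemma6} that the paper itself invokes (the paper omits the details of \Cref{lemma11}, saying only that it follows along the lines of \Cref{lemma6}). The truncation with $\phi_{\varepsilon}$, the reduction to the depth-profile sum $\frac{1}{n a_{n}^{r/k}}\sum_{i\le 2\varepsilon a_{n}^{-1}} i^{-r/k}\dsE[W_{i}(\dsT_{n})]$, and the use of \eqref{eq34a} with $2\varepsilon<\delta$ are exactly the intended argument, and your final case split (or even the cruder bound $i^{-r/k}\le 1$) closes it.
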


\begin{proof}
It should be clear that this can be shown along the lines of the proof of \Cref{lemma6}, and therefore, we omit its proof. 
\end{proof}
 
%%%%%%%%%%%%%%%%%%%%%%%%%%%%%%%%%%%%%%%%%%%%%%%%%%%%%%%%%%%%%%%%
\subsection{Trees of logarithmic height} \label{sect43}
%%%%%%%%%%%%%%%%%%%%%%%%%%%%%%%%%%%%%%%%%%%%%%%%%%%%%%%%%%%%%%%%
 
Natural examples of trees that fulfil the conditions of \Cref{Theo4} are the class of random trees
with logarithmic height, i.e., trees $\dsT_{n}$ such that $\max_{v \in \dsT_{n}} d_{n}(v)=O_{\rm p}(\ln n)$. For instance, random split trees, uniform random recursive trees, scale-free random trees and mixtures of complete regular trees.  

%%%%%%%%%%%%%%%%%%%%%%%%%%%%%%%%%%%%%%%%%%%%%%%%%%%%%%%%%%%%%%%%
\subsubsection{Complete binary trees} 
%%%%%%%%%%%%%%%%%%%%%%%%%%%%%%%%%%%%%%%%%%%%%%%%%%%%%%%%%%%%%%%%

Let $\dsT_{n}^{\rm bi}$ be a complete binary tree with $n \in
\dsN$ vertices, i.e., its height is $\lfloor \ln n \rfloor$. Recall that $\dsT_{n}^{\rm bi}$ has
$2^{i}$ vertices at height $i \in \{0, 1, \dots, \lfloor \ln n \rfloor\}$, and $n-2^{\lfloor \ln n
    \rfloor}+1$ vertices of height $\lfloor \ln n \rfloor$, moreover, the vertices of height
$\lfloor \ln n \rfloor$ have leftmost positions among the $2^{\lfloor \ln n \rfloor}$ possible ones;
see, e.g., \cite[Page 401]{Knuth1997}.  We use the notation $\lg_{2}n = (\ln n) /(\ln 2)$ for the logarithm with base $2$ of $n \in \mathbb{N}$. It should be clear that condition (a) in \Cref{Theo4}
is satisfied with $a_{n} = (\lg_{2} n)^{-1}$. Furthermore, one readily checks that $(\lg_{2} n)^{-1}(L_{n}(u_{1}), L_{n}(u_{1}, u_{2})) \inlaw (1, 2)$, as $n
\rightarrow \infty$. By a simple application of \cite[Corollary 1]{bertoin2013}, this implies that
condition (b) in \Cref{Theo4} is satisfied with
$\zeta_{1} \equiv 1$. Notice that each vertex in $\dsT_{n}^{\rm bi}$ has at most $2$ children. Then
it should be clear that condition (c) of \Cref{Theo4} follows from \Cref{lemma6} since
$\dsE[W_{i}(\dsT_{n}^{\rm bi})] \leq 2^{i}$ for $i \in \dsZ_{\ge 0}$. Therefore, \Cref{Theo4}
implies that $ n^{-1}  (\lg_{2}n)^{1/k} \scK_{1}(\dsT_{n}^{\rm bi})  \inlaw
Z_{1}$, as $n \rightarrow \infty$, where $Z_{1}$ is the random variable whose law is
determined entirely by its moments: $\dsE[Z_{1}^{0}] = 1$, and for $q \in \dsN$,
\begin{eqnarray} \label{eq33}
\dsE[Z_{1}^{q}] = q! \int_{0}^{\infty} \int_{0}^{x_{1}} \cdots \int_{0}^{x_{q-1}} 
\exp\left( 
    {- \frac{x^{k}_{1} + \cdots + x_{q}^{k}}{k!}}
\right)\;{\rm d} \cev{\bf x}_{q}. 
\end{eqnarray} 

\noindent It should be clear that \Cref{lemma10} and \Cref{lemma11} imply that $\dsE[\scK_{r}(\dsT_{n}^{\rm bi}) ] = O(n (\lg_{2} n)^{-r/k})$ for $r \in \{1, \dots, k\}$. Therefore, by the identity \eqref{eq32} and the Markov's inequality, $n^{-1} (\lg_{2} n)^{1/k} \scK(\dsT_{n}^{\rm bi})  \inlaw Z_{1}$, as $n \rightarrow \infty$.
However, it follows from the next lemma that \(Z_{1} \equiv (k!)^{\frac{1}{k}} \Gamma
    \left(1+1/k\right)\). Therefore, we actually have 
\begin{equation}\label{eq:bin}
    n^{-1} (\lg_{2} n)^{1/k} \scK(\dsT_{n}^{\rm bi}) \inlaw 
    Z_{1} \equiv
    (k!)^{{1}/{k}} \Gamma \left(1 + 1/k\right).
\end{equation}

\begin{remark}
    As Theorem 1.1 of \cite{Cai20182} shows, \(\scK(\dsT^{\rm bi})\), after proper shifting and
    rescaling, also converges to a non-degenerate limit distribution with an infinite mean. Thus it
    is not possible to derive the result in \cite{Cai20182} with the method of moments which we use
    to derive \Cref{theo1} for conditioned Galton-Watson trees.  The same is true for split trees,
    random recursive trees and
    scale-free trees.
\end{remark}

\begin{lemma}
    For \(q \in \dsN\), we have that
    \begin{equation*}\label{eq:bin:int}
        q! \int_{0}^{\infty} \int_{0}^{x_{1}} \cdots \int_{0}^{x_{q-1}} 
        \exp\left( 
            {- \frac{x^{k}_{1} + \cdots + x_{q}^{k}}{k!}}
        \right)\;{\rm d} \cev{\bf x}_{q}
        = 
        (k!)^{{q}/{k}} \Gamma \left(1+\frac{1}{k}\right)
        ^{q}. 
    \end{equation*}
\end{lemma}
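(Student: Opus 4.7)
The plan is to exploit the symmetry of the integrand together with a substitution that reduces the multiple integral to a power of a one-dimensional gamma integral.

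First I would observe that the integrand $\exp(-(x_{1}^{k}+\cdots+x_{q}^{k})/k!)$ is symmetric in $(x_{1},\dots,x_{q})$, while the domain $\{0 \leq x_{q} \leq x_{q-1} \leq \cdots \leq x_{1} < \infty\}$ is one of the $q!$ congruent simplices obtained by ordering the coordinates. Therefore
\begin{equation*}
q! \int_{0}^{\infty}\!\!\int_{0}^{x_{1}}\!\!\cdots\!\int_{0}^{x_{q-1}} \exp\left(-\frac{x_{1}^{k}+\cdots+x_{q}^{k}}{k!}\right) {\rm d}\cev{\vecx}_{q} = \int_{[0,\infty)^{q}} \exp\left(-\frac{x_{1}^{k}+\cdots+x_{q}^{k}}{k!}\right) {\rm d}\vecx_{q}.
\end{equation*}

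Next I would factor the right-hand side as $\left(\int_{0}^{\infty} e^{-x^{k}/k!}\,{\rm d}x\right)^{q}$ using Fubini's theorem, since the integrand is a product of functions each depending on a single variable.

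Finally I would evaluate the one-dimensional integral by the substitution $u = x^{k}/k!$, which gives $x = (k!\,u)^{1/k}$ and ${\rm d}x = \frac{(k!)^{1/k}}{k}\, u^{1/k-1}\,{\rm d}u$. Hence
\begin{equation*}
\int_{0}^{\infty} e^{-x^{k}/k!}\,{\rm d}x = \frac{(k!)^{1/k}}{k} \int_{0}^{\infty} u^{1/k - 1} e^{-u}\,{\rm d}u = \frac{(k!)^{1/k}}{k}\,\Gamma(1/k) = (k!)^{1/k}\,\Gamma(1 + 1/k),
\end{equation*}
where the last equality uses the identity $\Gamma(1/k)/k = \Gamma(1+1/k)$. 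Raising to the $q$-th power yields the claimed value. There is no real obstacle here; the statement is a routine gamma-function evaluation once the symmetry observation is made.
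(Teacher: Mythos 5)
Your proof is correct and rests on the same two ingredients as the paper's: the symmetry of the integrand over the $q!$ ordered simplices tiling $[0,\infty)^{q}$, and the one-dimensional gamma evaluation via $u=x^{k}/k!$. The paper merely performs these steps in the opposite order (substituting first, then recognizing the remaining ordered integral as $\dsP(G_{1}\ge\cdots\ge G_{q})=1/q!$ for i.i.d.\ Gamma$(1/k,1)$ variables), so the two arguments are essentially identical.
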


\begin{proof}
By making the change of variables \(w_{i} = x^{k}_{i}/k!\), for $1 \leq i \leq q$,  we notice that the integral at the right-hand side of (\ref{eq33}) is equal to 
    \begin{equation*}\label{eq:gam:order}
q!  (k!)^{{q}/{k}} \Gamma \left(1+\frac{1}{k}\right)
        ^{q}\int_{0}^{\infty} \int_{0}^{w_{1}} \cdots \int_{0}^{w_{q-1}} 
        \prod_{i=1}^{q} 
        \frac{e^{-w_{i}} w_{i}^{\frac{1}{k}-1}}{\Gamma(1/k)}
        {\rm d} \cev{\bf w}_{q}
        = (k!)^{{q}/{k}} \Gamma \left(1+\frac{1}{k}\right)
        ^{q}. 
    \end{equation*}
To see the last identity, we notice that the integral at the left-hand side is simply the
probability that \(G_{1} \ge G_{2} \ge \dots \ge G_{q}\), where
    \(G_{1},\dots,G_{q}\) are independent \(\text{Gamma}(1/k, 1)\) random variables, which is equal to \(1/q!\) since each order of \(G_{1},\dots,G_{q}\) is equally likely.
\end{proof}

%%%%%%%%%%%%%%%%%%%%%%%%%%%%%%%%%%%%%%%%%%%%%%%%%%%%%%%%%%%%%%%%
\subsubsection{Split trees} 
%%%%%%%%%%%%%%%%%%%%%%%%%%%%%%%%%%%%%%%%%%%%%%%%%%%%%%%%%%%%%%%%

The class of random split trees was first introduced by Devroye
\cite{Luc1999} to encompass many families of trees that are frequently used in algorithm analysis,
e.g., binary search trees and tries. Its exact construction is somewhat lengthy and we refer readers
to either the original algorithmic definition in \cite{Luc1999, holmgren2012} or the more probabilistic version in
\cite[Section 2]{Bro2012}. Informally speaking, a split tree $\dsT_{n}^{\rm sp}$ is constructed by
first distributing $n \in \dsN$ balls among the vertices of an infinite $b$-ary tree ($b \in \dsN
\setminus \{1\}$) and then removing all subtrees without balls. Each vertex in the infinite $b$-ary
tree is given a random non-negative split vector $\scV = (V_{1}, \dots, V_{b})$ such that
$\sum_{i=1}^{b}V_{i} = 1$ and $V_{i} \geq 0$, drawn independently from the same distribution. These
vectors affect how balls are distributed. In the study of split-trees, the following condition of
$\scV$ is often assumed (see, e.g., Holmgren \cite{holmgren2012}): \\

\noindent {\bf Condition A.} The split vector $\scV$ is permutation invariant. Moreover, $\dsP(V_{1} = 1) =\dsP(V_{1}=0) = 0$, and that $- \log(V_{1})$ is non-lattice. \\

\noindent Set $\mu \coloneqq b \dsE[-V_{1} \ln V_{1}] \in (0, \ln b)$. Devroye \cite{Luc1999} showed
that $\max_{ v \in \dsT_{n}^{\rm sp}} d_{n}(v) = O_{\rm p}(\ln n)$, that is, condition (a) in \Cref{Theo4}
with $a_{n} = \mu (\ln n)^{-1}$. Berzunza et al.\ \cite[Lemma 5 and Corollary 1]{Ber2019} have
shown that $\mu (\ln n)^{-1}(L_{n}(u_{1}), L_{n}(u_{1}, u_{2})) \inlaw (1, 2)$, as $n \rightarrow
\infty$. By a simple application of \cite[Corollary 1]{bertoin2013}, this implies that condition (b) in \Cref{Theo4} is
satisfied with $\zeta_{1} \equiv 1$. Notice that each vertex in $\dsT_{n}^{\rm sp}$ has at most $b$
children. Then it should be clear that condition (c) of \Cref{Theo4} follows from
\Cref{lemma6} since $\dsE[W_{i}(\dsT_{n}^{\rm sp})] \leq b^{i}$ for $i \in \dsZ_{\ge 0}$.
Therefore, \Cref{Theo4} implies that $\mu^{-1/k} n^{-1} (\ln n)^{1/k} \scK_{1}(\dsT_{n}^{\rm sp})
\inlaw Z_{1}$, as $n \rightarrow \infty$, where $Z_{1}$ is the random
variable whose law is determined entirely by its moments given in \eqref{eq33}. Furthermore, \Cref{lemma10} and \Cref{lemma11} imply that $\dsE[\scK_{r}(\dsT_{n}^{\rm sp}) ] = O(n (\ln n)^{-r/k})$ for
$r \in \{1, \dots, k\}$. Therefore, by the identity \eqref{eq32} and the Markov's inequality,
\begin{equation*}\label{eq:sp}
    \mu^{-1/k} n^{-1} (\ln n)^{1/k} \scK_{1}(\dsT_{n}^{\rm sp}) \inlaw 
    Z_{1} \equiv
    (k!)^{{1}/{k}} \Gamma \left(1+1/k\right)
    .
\end{equation*}

%%%%%%%%%%%%%%%%%%%%%%%%%%%%%%%%%%%%%%%%%%%%%%%%%%%%%%%%%%%%%%%%
\subsubsection{Uniform random recursive trees} 
%%%%%%%%%%%%%%%%%%%%%%%%%%%%%%%%%%%%%%%%%%%%%%%%%%%%%%%%%%%%%%%%

A uniform random recursive tree $\dsT_{n}^{\rm rr}$
is a random tree of $n \in \dsN$ vertices constructed recursively as follows: let $\dsT_{1}^{\rm
    rr}$ be the tree of a single vertex labelled $1$, given $\dsT_{n-1}^{\rm rr}$, choose a vertex in
$\dsT_{n-1}^{\rm rr}$ uniformly at random and attach a vertex labelled $n$ to the selected vertex as its
child, which give $\dsT_{n}^{\rm rr}$. The uniform random recursive tree is one of the most
studied random tree models. They appear for instance as simple epidemic models, or in computer
science as data structures. We refer to \cite[Chapter 6]{Drmota2} for background. Theorem
6.32 in \cite{Drmota2} shows that $\max_{ v \in \dsT_{n}^{\rm rr}} d_{n}(v) = O_{\rm p}(\ln n)$, that is, condition
(a) in \Cref{Theo4} is satisfied with $a_{n} = (\ln n)^{-1}$. From the results of Dobrow
\cite{Dobrow} (see also \cite[Section 2.5.5]{Drmota2}), it is not difficult to see that $(\ln
n)^{-1}(L_{n}(u_{1}), L_{n}(u_{1}, u_{2})) \inlaw (1, 2)$, as $n \rightarrow \infty$.
By a simple application of \cite[Corollary 1]{bertoin2013}, this implies that condition (b) in \Cref{Theo4} is satisfied with $\zeta_{1} \equiv 1$. By
\cite[Equation (11)]{Fuchs}, 
\begin{eqnarray*}
\dsE[W_{i}(\dsT_{n}^{\rm rr})] = \frac{(\ln n)^{i}}{\Gamma(1+1/(\ln n)) i!} (1+O(1/(\ln n)))
\end{eqnarray*}

\noindent uniformly for $n \geq 3$ and $1 \leq i \leq K \ln n$, for all $K \geq 1 $. Then it should
be clear that condition (c) of \Cref{Theo4} follows from \Cref{lemma6}. Therefore,
\Cref{Theo4} implies that $n^{-1} (\ln n)^{1/k} \scK_{1}(\dsT_{n}^{\rm rr})  \inlaw
Z_{1}$, as $n \rightarrow \infty$, where $Z_{1}$ is the random variable whose law is entirely
determined by its moments given in \eqref{eq33}. Furthermore, \Cref{lemma10} and \Cref{lemma11} imply that $\dsE[\scK_{r}(\dsT_{n}^{\rm rr}) ] = O(n (\ln n)^{-r/k})$ for $r \in \{1, \dots, k\}$.
Therefore, by the identity \eqref{eq32} and the Markov's inequality, 
\begin{equation*}\label{eq:rr}
    n^{-1} (\ln n)^{1/k} \scK_{1}(\dsT_{n}^{\rm rr}) \inlaw 
    Z_{1} \equiv
    (k!)^{{1}/{k}} \Gamma \left(1+1/k\right).
\end{equation*}

%%%%%%%%%%%%%%%%%%%%%%%%%%%%%%%%%%%%%%%%%%%%%%%%%%%%%%%%%%%%%%%%
\subsubsection{Scale-free random trees} 
%%%%%%%%%%%%%%%%%%%%%%%%%%%%%%%%%%%%%%%%%%%%%%%%%%%%%%%%%%%%%%%%

Scale-free random trees form a family of random trees that grow following a preferential attachment
algorithm, and are commonly used to model complex real-world networks; see M\'ori \cite{Mori2002}. A
scale-free random tree $\dsT_{n}^{\rm sf}$ is a random tree of $n \in \dsN$ vertices constructed
recursively as follows: Fix a parameter $\alpha \in (-1, \infty)$, and start from the tree
$\dsT_{1}^{\rm sf}$ that consists in a single edge connecting the vertices labelled  $1$ and $2$.
Suppose that  $T_{n}^{\rm sf}$ has been constructed for some $n \geq 1$, and for every $i \in \{1,
\dots, n+1\}$, denote by ${\rm deg}_{n}(i)$ the degree of the vertex $i$ in $T_{n}^{\rm sf}$. Then
conditionally given $T_{n}^{\rm sf}$, $T_{n+1}^{\rm sf}$ is built by adding an edge between
the new vertex $n+2$ and a vertex $v_{n}$ in $T_{n}^{\rm sf}$ chosen at random according to the law
\begin{eqnarray*}
 \dsP ( v_{n} = i| T_{n}^{\rm sf} ) = \frac{{\rm deg}_{n}(i)+ \alpha}{2n+\alpha(n+1)}, \hspace*{5mm} i \in \{1, \dots, n+1\}.
\end{eqnarray*}

\noindent The standard preferential attachment tree (also known as plane-oriented recursive tree) was made popular by Barab\'asi and Albert \cite{barabasi1999} and it
corresponds to the choice of \(\alpha = 0\).
On the other hand, if one lets $\alpha \rightarrow \infty$, then the algorithm yields a uniform
random recursive tree.  Janson \cite{janson2019} showed that scale-free random trees can also be
viewed as split trees with the branching factor \(b = \infty\).

Pittel \cite{Pittel1994} showed that $\max_{ v \in \dsT_{n}^{\rm sf}} d_{n}(v) = O_{\rm p}(\ln n)$,
that is, condition (a) in \Cref{Theo4} is satisfied with $a_{n} = (\beta \ln n)^{-1}$, where
$\beta \coloneqq (1+\alpha)/(2+\alpha)$. From the results of Borovkov and Vatutin \cite{Borov2006}
(see the bibliography therein for further references), it is not difficult to see that $ (\beta \ln
n)^{-1}(L_{n}(u_{1}), L_{n}(u_{1}, u_{2})) \inlaw (1, 2)$, as $n \rightarrow \infty$. By a simple
application of  \cite[Corollary 1]{bertoin2013}, this implies that condition (b) in \Cref{Theo4}
is satisfied with $\zeta_{1} \equiv 1$.
Hwang \cite[Equation 8]{hwang2007} showed that, for \(\alpha=0\), i.e., for the standard
preferential attachment tree,
\begin{equation}\label{eq:sf:w}
    \dsE[W_{i}(\dsT^{\rm sf}_{n})]
    =
    \frac{\sqrt{\pi n} 2^{1-i}  (\ln n)^{i-1}}{\Gamma (i) \left({2 i}/{(\ln n)}+1\right)
        \Gamma \left({i}/{(\ln n)}+1\right)}
    \left( 1+ O\left( {1}/{(\ln n)} \right) \right)
    ,
\end{equation}
uniformly for \(1 \le i \le K \ln n\) for all \(K \ge 1\). Thus by an argument similar to that for uniform
random recursive trees, we have for \(\alpha = 0\),
\begin{equation}\label{eq:sf}
  2^{-1/k} n^{-1} (\ln n)^{1/k} \scK(\dsT_{n}^{\rm st}) \inlaw 
    Z_{1} \equiv
    (k!)^{{1}/{k}} \Gamma \left(1+1/k\right)
    .
\end{equation}

\noindent\textbf{Open problem.} To apply \Cref{Theo4} to general scale-free trees, we need an estimate of \(\E{W_{i}(\dsT^{\rm sf}_{n})}\) for all \(\alpha > -1\), which is currently missing in the
literature. Thus we leave it as an open problem that an estimation similar to  (\ref{eq:sf:w}) holds for all \(\alpha >-1\). This would imply that the convergence in  (\ref{eq:sf}) holds for all scale-free trees. 

\begin{remark}
 In all previous examples of Section \ref{sect43}, the limit distributions found here are all
degenerate. However, we conjecture that another normalization should yield to non-degenerate
limits. This is known to be the case, when $k=1$, for complete binary trees (Janson
\cite{Janson2004}), recursive trees (Drmota et al.\ \cite{Drmota2009}, Iksanov and Möhle
\cite{Iksanov2007}), binary search trees (Holmgren \cite{Holmgren2010}) and split trees (Holmgren
\cite{Holmgren2011}). In the general case $k \geq 1$, Cai and Holmgren \cite{Cai20182} obtained also
a weak limit theorem in the case of complete binary trees suggesting that our conjecture must be
true. 
\end{remark}

%%%%%%%%%%%%%%%%%%%%%%%%%%%%%%%%%%%%%%%%%%%%%%%%%%%%%%%%%%%%%%%%
\subsubsection{Mixture of regular trees} 
%%%%%%%%%%%%%%%%%%%%%%%%%%%%%%%%%%%%%%%%%%%%%%%%%%%%%%%%%%%%%%%%

Our next example provides a method to build trees that fulfill the conditions of Theorem \ref{Theo4}
where the random variables $\zeta_{1}, \zeta_{2}, \dots$ in the hypotheses are not constants.
Basically, the procedure consists of gluing trees which satisfy the assumptions of Theorem
\ref{Theo4}. In this example, we consider a mixture of complete regular trees but one may consider
other families of trees as well. For a fixed integer $m \geq 1$, let $(d_{i})_{i=1}^{m}$ denote a
positive sequence of integers. Next, for $i = 1, \dots, m$, let $h_{i}(n): \mathbb{R}_{+}
\rightarrow \mathbb{R}_{+}$ be a function with $\lim_{n \rightarrow \infty} h_{i}(n) = \infty$. Let
$T_{n_{i}}^{(d_{i})}$ be a complete $d_{i}$-regular tree with height $\lfloor h_{i}(n) \rfloor$.
Since there are $d_{i}^{j}$ vertices at distance $j=0, 1, \dots, \lfloor h_{i}(n) \rfloor$ from the
root, its size is given by
\begin{eqnarray*}
 n_{i} = n_{i}(n) = d_{i} (d_{i}^{\lfloor h_{i}(n) \rfloor}-1)/(d_{i}-1).
\end{eqnarray*}

\noindent In particular, one can check that each tree $T_{n_{i}}^{(d_{i})}$ fulfills the assumptions
in Theorem \ref{Theo4} with $a_{n} = \ln n_{i}$ and $\zeta_{1} = (\ln d_{i})^{-1}$; note that
condition (c) in Theorem \ref{Theo4} follows from \Cref{lemma6} and the fact that the number of
descendants of each vertex is bounded. Now imagine that we merge all the $m$ regular trees into
one common root. This leads us to a new tree $T_{n}^{(d)}$ of size $n = \sum_{i=1}^{m} n_{i} + 1
-m$. Assume further that $n_{1} \sim n_{2} \sim \cdots \sim n_{m}$, as $n \rightarrow \infty$. Then,
we observe that the probability that a vertex of $T_{n}^{(d)}$ chosen uniformly at random belongs to
the tree $T_{n_{i}}^{(d_{i})}$ converges when $n \rightarrow \infty$ to $1/m$. Then, one readily
checks that this new tree satisfies the hypotheses in Theorem \ref{Theo4} with $a_{n} = \ln n$ and
$\zeta_{1}, \zeta_{2}, \dots$ are i.i.d.\ random variables uniformly distributed in the set
$\{1/ \ln d_{1}, \dots, 1 / \ln d_{m} \}$. To see this, note that the probability that a uniform chosen vertex of $T_{n}^{(d)}$ belongs to $T_{n_{i}}^{(d_{i})}$ converges to $1/m$.

\paragraph{Acknowledgements.}
This work is supported by the Knut and Alice Wallenberg
Foundation, a grant from the Swedish Research Council and The Swedish Foundations' starting grant from Ragnar S\"oderbergs Foundation.

%\bibliography{Gab}
% \bibliographystyle{Myplain5}

\providecommand{\bysame}{\leavevmode\hbox to3em{\hrulefill}\thinspace}
\providecommand{\MR}{\relax\ifhmode\unskip\space\fi MR }
% \MRhref is called by the amsart/book/proc definition of \MR.
\providecommand{\MRhref}[2]{%
  \href{http://www.ams.org/mathscinet-getitem?mr=#1}{#2}
}
\providecommand{\href}[2]{#2}

\end{document}